\documentclass[12pt,amscd, amssymb]{amsart}
\usepackage{tikz}
\usepackage{pgfmath} 
\usepackage{pstricks-add}
\usetikzlibrary{arrows.meta, decorations.markings}
\usepackage{pgf,tikz}
\usetikzlibrary{arrows}
\usepackage{amssymb}
\usepackage{enumerate}
\usepackage{hyperref}
\theoremstyle{plain}

\newtheorem{Theorem}{Theorem}[section]
\newtheorem{Lemma}[Theorem]{Lemma}
\newtheorem{Corollary}[Theorem]{Corollary}
\newtheorem{Proposition}[Theorem]{Proposition}

\newtheorem*{Question}{Question}

\theoremstyle{definition}
\newtheorem{Definition}[Theorem]{Definition}

\newtheorem{Example}[Theorem]{Example}
\newtheorem{Remark}[Theorem]{Remark}

\def\reg{\operatorname{reg}}

\def\sk{\smallskip\par}

\def\deg{{\mathrm{deg}}}

\newcommand{\Z}{\mathbb{Z}}
\newcommand{\N}{\mathbb{N}}

\def\reg{\operatorname{reg}}

\def\dim{\operatorname{dim}}

\def\max{\operatorname{max}}

\begin{document}

\title{Independence Polynomials of graphs and degree of $h$-polynomials of edge ideals}

\email{}

\author{Ton That Quoc Tan}
\address{Department of Mathematics, FPT University, Danang
, Viet Nam}
\email{quoctanmath@gmail.com, tanttq@fe.edu.vn}
\date{}
\subjclass[2020]{13D02, 13D40, 13C70, 05C31}
\keywords{Edge ideals, $h$-polynomial, degree, independence polynomial}
 \date{}

\begin{abstract}
Let $G=(V,E)$ be a finite simple graph. In this paper, we study the degree of the $h$-polynomial of the edge ideal of $G$ in relation to the independence number of $G$. Our approach is based on the value of the independence polynomial of $G$ at $-1$ and its derivatives at $-1$. We establish a necessary and sufficient condition for the equality $\deg\ h_{R/I(G)}(t)=\alpha(G)$. As consequences, we obtain combinatorial formulas for the degree of the $h$-polynomial for several classes of graphs, including paths, cycles, bipartite graphs, Cameron--Walker graphs, and antiregular graphs.
\end{abstract}
\maketitle
\section*{Introduction} \sk
Let $G=(V,E)$ be a finite simple graph with vertex set $V=\{x_1,\ldots,x_n\}$. 
We identify the vertices of the graph with the corresponding variables in the polynomial ring $R = k[x_1,\ldots,x_n]$, where $k$ is a fixed field. 
The edge ideal of $G$, denoted by $I(G)$, is defined as the monomial ideal
\[
I(G) = (x_ix_j \mid \{x_i,x_j\} \in E) \subseteq R.
\]
It is well known that edge ideals of simple graphs reflect combinatorial aspects of algebraic invariants of monomial ideals such as the Castelnuovo--Mumford regularity, projective dimension, and others. 
Many studies have focused on expressing or bounding such invariants in terms of graph invariants; see, for example, \cite{TaiTuyl, Trung, Long}. 
In this paper, we investigate the degree of the $h$-polynomial of edge ideals, a fundamental yet less explored algebraic invariant.

The \emph{Hilbert function} of the graded ring $R/I(G)$, denoted by $HF_{R/I(G)}$, is the numerical function 
\[
HF_{R/I(G)} : \mathbb{Z}_{\ge 0} \longrightarrow \mathbb{Z}_{\ge 0}
\]
defined by
\[
HF_{R/I(G)}(i) = \dim_k (R/I(G))_i,
\]
where $(R/I(G))_i$ denotes the $i$-th graded component of $R/I(G)$, viewed as a $k$-vector space. 
The \emph{Hilbert series} of $R/I(G)$, denoted by $H_{R/I(G)}$, is defined by
\[
H_{R/I(G)}(t) = \sum_{i=0}^{\infty} HF_{R/I(G)}(i)\, t^i.
\]
According to \cite[Theorem 13.2]{Matsumura}, we have
\[
H_{R/I(G)}(t) = \frac{h_{R/I(G)}(t)}{(1-t)^d},
\]
where $h_{R/I(G)}(t) \in \mathbb{Z}[t]$, $h_{R/I(G)}(1) \neq 0$, and $d$ is the Krull dimension of $R/I(G)$.

The polynomial $h_{R/I(G)}(t)$ is called the \emph{$h$-polynomial} of $R/I(G)$. 
We denote the degree of this polynomial by $\deg\ h_{R/I(G)}(t)$. 
Recent results have focused on the relationship between the degree of the $h$-polynomial and other invariants such as the dimension, projective dimension, depth, and regularity of the edge ideal. 

In \cite{Hibi1}, the authors determined the possible values of the pair of invariants
 $$(\reg R/I(G), \deg\ h_{R/I(G)}(t))$$ for Cameron-Walker graphs. 
In \cite{Hibi2}, for any two integers $d,r \geq 1$, the authors constructed a graph $G$ such that 
\[
(\reg R/I(G), \deg\ h_{R/I(G)}(t)) = (d,r).
\]
Using the Hilbert series, Hibi et al.~\cite{Hibi3} proved that 
\[
\deg\ h_{R/I(G)}(t) = \dim R/I(G)
\]
for every Cameron--Walker graph. 
Recently, Biermann et al.~\cite{OKeefe} computed $\deg\ h_{R/I(G)}(t)$ in terms of the independence number for several fundamental classes of graphs, including paths, cycles, and bipartite graphs.

Motivated by these results, it is natural to consider the following question posed in \cite[Question 2]{OKeefe}.

\begin{Question}
For which classes of graphs does $\deg\ h_{R/I(G)}(t) = \alpha(G)$ hold?
\end{Question}

By exploiting the relationship between the $h$-polynomial of $R/I(G)$ and the independence polynomial of $G$, we give an affirmative answer to this question, as stated in the following theorem.  

\noindent {\bf Theorem \ref{general}}
{\it
Let $G=(V,E)$ be a finite simple connected graph. 
Assume that, using the elimination process, $G$ decomposes into a disjoint union of star graphs
$S_1,\ldots,S_r$ and graphs $G_1,\ldots,G_s$, where every vertex of each $G_j$ has degree at least $2$.
Then
$$
\deg\ h_{R/I(G)}(t) = \alpha(G)\ \ \text{if and only if} \ \ \ I(G_j,-1) \neq 0$$ for all $j=1,\ldots,s$.
}

As a consequence of Theorem \ref{general}, we obtain combinatorial formulas for the degree of $h$-polynomial of several fundamental graphs, including paths (Theorem \ref{pathmain}), cycles (Theorem \ref{cyclemain}), bipartite graphs (Theorem \ref{bipartite}), Cameron--Walker graphs (Theorem \ref{Cameron}), and antiregular graphs (Theorem \ref{antiregular}).

Section 1 introduces the notation used throughout the paper. 
In Section 2, we present a necessary and sufficient condition for 
$\deg h_{R/I(G)}(t) = \alpha(G)$ in terms of the value of the independence polynomial at $-1$. 
In Section 3, we characterize graphs for which the degree of the $h$-polynomial attains its maximum value. 
In Section 4, we apply Theorem~\ref{general} to several classes of graphs, including paths, cycles, bipartite graphs, Cameron--Walker graphs, and antiregular graphs, and obtain combinatorial formulas for the degree of the $h$-polynomial of these graphs.

\section{Preliminary}
\subsection{Graph Theory}
Let $G=(V,E)$ be a finite simple graph on the vertex set $V=\{x_1,\ldots,x_n\}$ and the edge set $E$. If $\{x_i,x_i\} \in E$, we say $x_i$ and $x_j$ are \emph{neighbors} in $G$. 
The \emph{neighborhood} of a vertex $x_i$ is the set $$N_G(x_i)=\{x_j \in V \mid \{x_i,x_j\} \in E\},$$
and $N_G[x_i] = N_G(x_i) \cup \{x_i\}$. If there is no ambiguity on $G$, we use $N(x_i)$ and $N[x_i]$, respectively. A vertex $x_i$ is a \emph{leaf vertex}, or simply \emph{leaf} if its neighborhood consists of exactly one vertex. A neighbor of a leaf is called \emph{whiskered}. The \emph{distance} $d(x_i,x_j)$ between two vertices $x_i$ and $x_j$ is the minimum 
length of all paths from $x_i$ to $x_j$. For a subset $U$ of vertices, we denote by 
$d(x_i,U)$ the distance from $x_i$ to $U$, that is,
\[
d(x_i,U)=\min\{d(x_i,x_j): x_j\in U\}.
\]  
If $X \subseteq V$, then $G[X]$ is the subgraph of $G$ induced by $X$. By $G-W$ we mean the subgraph $G[V-W]$, if $W \subseteq V$.
For brevity, we write $G-x_i$ instead of $G-\{x_i\}$, whenever $W= \{x_i\}$. We also denote by $G-F$ the subgraph of $G$ obtained by deleting the edges of $F$, for $F \subseteq E$, and we write shortly $G-x_ix_j$, whenever $F=\{\{x_i,x_j\}\}$. 

A subset $S \subseteq V$ is an \emph{independent set} of $G$ if $\{x_i,x_j\} \notin E$ for all $x_i, x_j \in S$. In particular, the empty set $\emptyset$ is an independent set of $G$. The \emph{independence number} of $G$, denoted by $\alpha(G)$, is the maximum size of an independent set of $G$. Let $s_i(G)$ be the number of the independent sets of size $i$ of $G$. The \emph{independence polynomial} of $G$ is defined by
$$I(G,t) = \sum_{i=0}^{\alpha(G)}s_i(G)t^i.$$
Let $G_1, G_2$ be two simple graphs. The \emph{disjoint union} of them, denoted by $G_1 \cup G_2$, is the graph with the vertex set $V(G_1 \cup G_2) = V(G_1) \cup V(G_2)$ and the edge set $E(G_1 \cup G_2) = E(G_1) \cup E(G_2)$. For two disjoint graphs, their \emph{Zykov sum}, denoted by $G_1 + G_2$, is the graph with vertex set $V(G_1 + G_2) = V(G_1) \cup V(G_2)$ and the edge set $E(G_1+G_2) = E(G_1) \cup E(G_2) \cup \{uv \mid u \in V(G_1), v \in V(G_2)\}$.

The following results provide useful recursive techniques for evaluating the independence polynomial of graphs.

\begin{Lemma}[\cite{Hoede}, Theorem 2.3]\label{Recusive}
Let $G=(V,E)$ be a simple graph, $x_i \in V$ and $x_ix_j \in E$. Then the following equalities hold:\\ 
{\rm (i)}  $I(G,t) = I(G - x_i,t) + tI(G - N[x_i],t)$;\\
{\rm (ii)} $I(G,t) = I(G - x_ix_j,t) - t^2I(G - (N(x_i) \cup N(x_j)),t)$.
\end{Lemma}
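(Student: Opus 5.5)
The statement is Lemma~\ref{Recusive}: both identities come from splitting the independent sets of $G$ according to how they meet a vertex (for (i)) or an edge (for (ii)). Since $I(G,t)=\sum_S t^{|S|}$ over all independent sets $S$ of $G$, it suffices to give, in each case, a weight-preserving bijection between families of independent sets. Here the weight of $S$ is $t^{|S|}$.

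For (i), I split the independent sets of $G$ into those avoiding $x_i$ and those containing $x_i$. A set $S$ avoiding $x_i$ is independent in $G$ exactly when it is independent in $G-x_i$, because $G-x_i$ is an induced subgraph. These sets therefore contribute $I(G-x_i,t)$. If $S$ contains $x_i$, then $S\setminus\{x_i\}$ contains no vertex of $N[x_i]$. Hence $S\setminus\{x_i\}$ is an independent set of the induced subgraph $G-N[x_i]$. Conversely, adding $x_i$ to any independent set of $G-N[x_i]$ gives an independent set of $G$ containing $x_i$. This bijection raises the size by one, so these sets contribute $tI(G-N[x_i],t)$.

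For (ii), I compare independent sets of $G$ with those of $G-x_ix_j$. Deleting an edge creates no new adjacencies, so every independent set of $G$ is independent in $G-x_ix_j$. Conversely, an independent set of $G-x_ix_j$ fails to be independent in $G$ exactly when it contains both $x_i$ and $x_j$. Thus
\[
I(G-x_ix_j,t)-I(G,t)=\sum_{T} t^{|T|},
\]
where $T$ ranges over the independent sets of $G-x_ix_j$ containing both $x_i$ and $x_j$.

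For such a $T$, the set $T\setminus\{x_i,x_j\}$ avoids $N_{G-x_ix_j}(x_i)\cup N_{G-x_ix_j}(x_j)\cup\{x_i,x_j\}$. This union equals $N_G(x_i)\cup N_G(x_j)$, because $x_j\in N_G(x_i)$ and $x_i\in N_G(x_j)$. Hence $T\setminus\{x_i,x_j\}$ is an independent set of $G-(N(x_i)\cup N(x_j))$. This subgraph is induced and uses no edge incident to $x_i$ or $x_j$, so it is the same whether taken in $G$ or in $G-x_ix_j$.

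Conversely, adding $x_i$ and $x_j$ to any independent set of $G-(N(x_i)\cup N(x_j))$ gives an independent set of $G-x_ix_j$. This bijection changes sizes by $2$, so the sum equals $t^2I(G-(N(x_i)\cup N(x_j)),t)$. Rearranging gives (ii).

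The only point needing care is the bookkeeping of neighborhoods in (ii). In the edge-deleted graph, $x_i$ and $x_j$ are no longer neighbors, and they must be added back to the deleted set. This is exactly why the formula uses open neighborhoods in $G$, which already contain $x_j$ and $x_i$ respectively.
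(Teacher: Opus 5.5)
Your proof is correct. In (i), splitting independent sets by whether they contain $x_i$ is sound. In (ii), comparing with $G-x_ix_j$ correctly isolates the independent sets that contain both endpoints. You also handle the one delicate point: $N(x_i)\cup N(x_j)$ means open neighborhoods in $G$, which already contain $x_j$ and $x_i$. So both endpoints are removed along with their neighbors, and the remaining induced subgraph is the same in $G$ and in $G-x_ix_j$.

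The paper gives no proof of this lemma; it simply imports it from Hoede (Theorem 2.3 there). Your bijective counting is therefore a self-contained replacement for that citation. The citation buys brevity. Your argument makes explicit the neighborhood convention that the paper's later applications depend on. For example, $I(C_n,t)=I(P_n,t)-t^2I(P_{n-4},t)$ comes from deleting $x_1x_n$ and removing $x_{n-1},x_n,x_1,x_2$. Likewise $I(P_{3k+1},t)=(1+t)I(P_{3k},t)-t^2I(P_{3k-2},t)$ comes from deleting $x_{3k}x_{3k+1}$ and removing $x_{3k-1},x_{3k},x_{3k+1}$.
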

\begin{Lemma}[\cite{Hoede}, Corollary 3.8 and Theorem 3.9]\label{Joint}
Let $G_1, G_2$ be two vertex-disjoint graphs. Then\\
{\rm (i)} $I(G_1\cup G_2,t) = I(G_1,t)I(G_2,t)$;\\
{\rm (ii)} $I(G_1 +  G_2,t) = I(G_1,t) + I(G_2,t) -1.$
\end{Lemma}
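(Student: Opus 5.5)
Both identities can be checked directly at the level of independent sets by writing $I(G,t)=\sum_{S}t^{|S|}$, where $S$ runs over all independent sets of $G$, including $\emptyset$. This is the definition regrouped by size, since $s_i(G)$ counts the independent sets with $|S|=i$. Each identity then follows from a bijection between independent sets that respects the weight $t^{|S|}$.

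\emph{Part (i).} The plan is to show that the independent sets of $G_1\cup G_2$ are exactly the disjoint unions $S_1\cup S_2$ with $S_1$ independent in $G_1$ and $S_2$ independent in $G_2$.
\begin{enumerate}[(a)]
\item Since $E(G_1\cup G_2)=E(G_1)\cup E(G_2)$ and no edge joins $V(G_1)$ to $V(G_2)$, a set $S\subseteq V(G_1)\cup V(G_2)$ is independent if and only if $S\cap V(G_1)$ is independent in $G_1$ and $S\cap V(G_2)$ is independent in $G_2$.
\item The map $S\mapsto (S\cap V(G_1),\,S\cap V(G_2))$ is therefore a bijection onto pairs $(S_1,S_2)$ of independent sets.
\item Because the vertex sets are disjoint, $|S|=|S_1|+|S_2|$, so $t^{|S|}=t^{|S_1|}t^{|S_2|}$.
\item Summing over all pairs factors the sum, giving $I(G_1\cup G_2,t)=I(G_1,t)I(G_2,t)$. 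Equivalently, $s_k(G_1\cup G_2)=\sum_{i+j=k}s_i(G_1)s_j(G_2)$, which is the coefficient identity for a product.
\end{enumerate}

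\emph{Part (ii).} The plan is to show that in the Zykov sum every nonempty independent set lies entirely inside one of the two summands.
\begin{enumerate}[(a)]
\item In $G_1+G_2$, every $u\in V(G_1)$ is adjacent to every $v\in V(G_2)$. So an independent set cannot meet both $V(G_1)$ and $V(G_2)$.
\item Hence the independent sets of $G_1+G_2$ are exactly the independent sets of $G_1$ together with the independent sets of $G_2$. A subset of $V(G_i)$ is independent in $G_1+G_2$ if and only if it is independent in $G_i$, because $G_1+G_2$ induces $G_i$ on $V(G_i)$.
\item The two families intersect only in $\emptyset$. Therefore $s_k(G_1+G_2)=s_k(G_1)+s_k(G_2)$ for $k\ge 1$, while $s_0(G_1+G_2)=1=s_0(G_1)+s_0(G_2)-1$.
\item Summing over $k$ gives $I(G_1+G_2,t)=I(G_1,t)+I(G_2,t)-1$.
\end{enumerate}

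There is no substantive obstacle in either part. The only points that need care are the empty set, which is counted once in (i) but would be double-counted in (ii) without the $-1$ correction, and the observation in (ii)(b) that the Zykov sum induces each $G_i$ on its own vertex set.
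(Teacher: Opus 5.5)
Your argument is correct and complete. The paper gives no proof of this lemma; it imports both identities from Hoede and Li (Corollary 3.8 and Theorem 3.9). Your proof therefore replaces the citation with a self-contained argument. You write $I(G,t)=\sum_S t^{|S|}$ and then (i) split an independent set of $G_1\cup G_2$ along $V(G_1)$ and $V(G_2)$, and (ii) observe that every nonempty independent set of $G_1+G_2$ lies inside a single summand. This is the standard direct proof and costs nothing in length. It also makes the content visible: the product formula is the convolution $s_k(G_1\cup G_2)=\sum_{i+j=k}s_i(G_1)s_j(G_2)$, and the $-1$ in (ii) corrects for the empty set being counted in both summands.
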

A set of pairwise disjoint edges of $G$ is said to be a \emph{matching} of $G$. The \emph{matching number} of $G$ is the maximum cardinality of an matching of $G$, and it is denoted by $\mu(G)$. 
Let $\mathcal{M}$ be a matching of $G$. If the induced subgraph of $G$ on the vertices of $\mathcal{M}$ has edge set exactly $\mathcal{M}$, then $\mathcal{M}$ is called an \emph{induced matching}. The induced matching number of $G$  is the maximum cardinality of a induced matching of $G$, and it is denoted by $\nu(G)$. It is immediate that $\nu(G) \leq \mu(G)$. If $\nu(G) = \mu(G)$, then $G$ is called a \emph{Cameron-Walker graph}.
\subsection{Homological Invariants}
Let $R=k[x_1,\ldots,x_n]$ be the polynomial ring in $n$ variables over a field $k$ with $\deg\ x_i = 1$ for each $i$. For any ideal $I$ of $R$, the \emph{dimension} of $R/I$, denoted $\dim R/I$, is the length of the longest chain of prime ideals in $R/I$. 

If $I \subseteq R$ is a homogeneous ideal, then the \emph{Hilbert series} of $R/I$ is
$$H_{R/I}(t) = \sum_{i=0}^{\infty} \dim_k(R/I)_it^i,$$
where $(R/I)_i$ denotes the $i$-th graded piece of $R/I$. If $\dim R/I = d$, then the Hilbert series of $R/I$ is of the form
$$H_{R/I}(t) = \dfrac{h_0 + h_1t +h_2t^2 + \cdots + h_st^s}{(1-t)^d},$$
where each $h_i \in \Z$ (\cite[Theorem 13.2]{Matsumura}) and $h_{R/I}(1) \neq 0$. We say that
$$h_{R/I}(t) = h_0 + h_1t +h_2t^2 + \cdots + h_st^s$$
with $h_s \neq 0$ is the \emph{$h$-polynomial} of $R/I$. We call the difference $\deg\ h_{R/I}(t) - \dim R/I$ the \emph{$a$-invariant} of $R/I$ (\cite[Definition 4.4.4]{Herzog}) and denote it by $a(R/I)$.
\subsection{Edge ideals}
Let $G=(V,E)$ be a finite simple graph on $V=\{x_1,\ldots,x_n\}$. We associate with $G$ the square-free monomial ideal 
$$I(G) = \left(x_ix_j \mid \{x_i,x_j\} \in E\right) \subseteq R = k[x_1,\ldots,x_n].$$
The ideal $I(G)$ is called the \emph{edge ideal} of the graph $G$.

It is known that the dimension of $R/I(G)$ is given by
\begin{Lemma}
  $$\dim R/I(G) = \max\{|S| \mid S \text{ is an independent set of } G\} =\alpha(G).$$
\end{Lemma}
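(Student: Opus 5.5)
The plan is to prove the lemma by identifying minimal primes of $I(G)$ with complements of maximal independent sets. This is the standard Stanley--Reisner argument.

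Since $I(G)$ is a squarefree monomial ideal, it is radical. Its minimal primes are monomial primes $P_C=(x_i \mid x_i\in C)$, and $I(G)\subseteq P_C$ holds exactly when $C$ meets every edge, that is, when $C$ is a vertex cover of $G$. So the minimal primes of $I(G)$ are the $P_C$ with $C$ a minimal vertex cover.

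The dimension of $R/I(G)$ is the maximum of $\dim R/P$ over minimal primes $P$ of $I(G)$. For a vertex cover $C$ we have $R/P_C\cong k[x_j \mid x_j\notin C]$, a polynomial ring of dimension $n-|C|$. Hence
$$\dim R/I(G)=n-\min\{|C| \mid C \text{ is a vertex cover of } G\}.$$

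A subset $C\subseteq V$ is a vertex cover if and only if $V\setminus C$ is independent, because no edge can have both endpoints outside $C$. Complementation is therefore a bijection between vertex covers and independent sets, so $n-\min|C|=\max|S|=\alpha(G)$.

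As an alternative, I could count monomials to get the Hilbert series directly. A monomial lies outside $I(G)$ if and only if its support is an independent set $S$. This gives
$$H_{R/I(G)}(t)=\sum_{S}\Big(\frac{t}{1-t}\Big)^{|S|}=\sum_{i=0}^{\alpha(G)} s_i(G)\frac{t^i}{(1-t)^i}.$$
The pole at $t=1$ has order exactly $\alpha(G)$, since the top coefficient $s_{\alpha(G)}(G)$ is positive. The order of this pole equals the Krull dimension. This form is probably the one the paper uses later, since it links $h_{R/I(G)}$ to $I(G,t)$.

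There is no real obstacle. The only point needing care is that the minimal primes of a monomial ideal are monomial primes, which is standard.
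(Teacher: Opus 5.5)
Your proof is correct, and both of your arguments work. The paper itself gives no proof of this lemma: it states it as a known fact. Right after it, the paper quotes the Hilbert series formula $H_{R/I(G)}(t)=\sum_{i=0}^{\alpha(G)} f_{i-1}t^i/(1-t)^i$ from Herzog--Hibi and reads off $\deg h_{R/I(G)}(t)\le \alpha(G)$.

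Your main argument runs as follows: the minimal primes of $I(G)$ are the $P_C$ for minimal vertex covers $C$; $\dim R/P_C=n-|C|$; and $C$ is a vertex cover exactly when $V\setminus C$ is independent. This is the standard Stanley--Reisner proof. It also identifies the top-dimensional components with the maximum independent sets.

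Your alternative argument is closer in spirit to the paper. Your monomial count is exactly the formula the paper quotes. You then observe that the numerator $\sum_{i} s_i(G)t^i(1-t)^{\alpha(G)-i}$ takes the value $s_{\alpha(G)}(G)>0$ at $t=1$. Hence the pole order at $t=1$ is exactly $\alpha(G)$, and Hilbert--Serre gives $d=\alpha(G)$. This is worth making explicit, because two steps in the paper tacitly use $d=\alpha(G)$: the deduction $\deg h_{R/I(G)}(t)\le\alpha(G)$, and the identification of the numerator over $(1-t)^{\alpha}$ in Lemma~\ref{Lemmamain} as the $h$-polynomial. Your pole computation derives $d=\alpha(G)$ from the same formula rather than importing it as a separate fact.
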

According to \cite[Proposition 6.2.1]{Herzog2}, the Hilbert series of $R/I(G)$ can be expressed as
$$H_{R/I(G)}(t) = \sum_{i=0}^{\alpha(G)}\dfrac{f_{i-1}t^i}{(1-t)^i},$$
where $f_{i-1}$ is the number of independent sets of cardinality $i$ in $G$. It follows that
$$\deg\ h_{R/I(G)}(t) \leq \alpha(G).$$
\begin{Remark}
  $a(R/I(G)) \leq 0.$ 
\end{Remark}
\section{Independence Polynomial at $-1$ and the Degree of the $h$-Polynomial}
In this section, we express the Hilbert series of $R/I(G)$ in terms of the independence polynomial of $G$ and its derivative evaluated at $-1$. 
As a consequence, we characterize when the degree of the $h$-polynomial of an edge ideal attains its maximum value $\alpha(G)$ in terms of the value of the independence polynomial of $G$ at $-1$.
In the following lemma, we give an explicit formula for the Hilbert series of $R/I(G)$ in terms of the independence polynomial of $G$.
\begin{Lemma}\label{Lemmamain}
  Let $G$ be a simple graph, $I(G)$ be the edge ideal of $G$ and $\alpha = \alpha(G)$ be the independence number of $G$. Let $I(G,t)$ be the independence polynomial of $G$. Then the Hilbert series of $R/I(G)$ is given by
  $$H_{ R/I(G)}(t)=\dfrac{1}{(1-t)^\alpha}\sum_{i=0}^{\alpha}\dfrac{I^{(i)}(G,-1)}{i!}(1-t)^{\alpha -i},$$
  where $I^{(i)}(G,-1)$ denotes the value of the $i$-th derivative of $I(G,t)$ evaluated at $-1$.
\end{Lemma}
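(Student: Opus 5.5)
We start from the formula quoted from \cite[Proposition 6.2.1]{Herzog2},
$$H_{R/I(G)}(t)=\sum_{i=0}^{\alpha}\frac{s_i(G)\,t^i}{(1-t)^i},$$
where $s_i(G)=f_{i-1}$ is the number of independent sets of size $i$. The plan is to put everything over the common denominator $(1-t)^{\alpha}$ and identify the numerator with a Taylor expansion of $I(G,x)$ around $x=-1$.

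Putting each summand over $(1-t)^\alpha$ gives
$$H_{R/I(G)}(t)=\frac{1}{(1-t)^\alpha}\sum_{i=0}^{\alpha}s_i(G)\,t^i(1-t)^{\alpha-i}
=\frac{(1-t)^{\alpha}}{(1-t)^\alpha}\sum_{i=0}^{\alpha}s_i(G)\Bigl(\frac{t}{1-t}\Bigr)^{i}.$$
Setting $u=\frac{t}{1-t}$, the last sum is exactly $I(G,u)$, so the numerator equals $(1-t)^\alpha I\bigl(G,\tfrac{t}{1-t}\bigr)$.

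Next, since $I(G,x)$ is a polynomial of degree $\alpha$, Taylor's formula at $x=-1$ is exact:
$$I(G,x)=\sum_{i=0}^{\alpha}\frac{I^{(i)}(G,-1)}{i!}(x+1)^i.$$
Substituting $x=u=\frac{t}{1-t}$ gives $x+1=\frac{1}{1-t}$. Hence
$$(1-t)^\alpha I\Bigl(G,\frac{t}{1-t}\Bigr)=\sum_{i=0}^{\alpha}\frac{I^{(i)}(G,-1)}{i!}(1-t)^{\alpha-i},$$
which is the claimed formula.

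There is no real obstacle. The only points needing care are these:
\begin{enumerate}
\item The manipulations are identities of rational functions in $t$, equivalently of formal power series, since $1-t$ is invertible in $k[[t]]$. The substitution is therefore legitimate.
\item The Taylor expansion terminates at $i=\alpha$ because $\deg I(G,x)=\alpha$.
\end{enumerate}
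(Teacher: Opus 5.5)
Your proof is correct, and it takes a different route from the paper's. The paper does not work directly from the Stanley--Reisner formula. It imports, from the proof of Theorem 3.2 in Biermann et al., an intermediate expansion of the numerator as $(1-\sum_s D_s)(1-t)^\alpha+\sum_s D_s(1-t)^{\alpha-s-1}$, where the $D_s$ are explicit alternating binomial sums. It then checks by hand, differentiating $I(G,x)$ term by term, that $1-\sum_s D_s=I(G,-1)$ and $D_s=I^{(s+1)}(G,-1)/(s+1)!$. You instead recognize the numerator $\sum_i s_i(G)t^i(1-t)^{\alpha-i}$ as $(1-t)^\alpha I\bigl(G,\frac{t}{1-t}\bigr)$. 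The substitution $x=\frac{t}{1-t}$ gives $x+1=\frac{1}{1-t}$, so the exact Taylor expansion of $I(G,x)$ at $-1$ yields the claimed sum at once.

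Your argument has three advantages:
\begin{itemize}
\item It is self-contained, using only the formula $H_{R/I(G)}(t)=\sum_i f_{i-1}t^i/(1-t)^i$ already stated in the preliminaries.
\item It avoids all the binomial bookkeeping with the $D_s$ and makes clear why the point $-1$ appears.
\item It works verbatim for the $f$-polynomial of any simplicial complex.
\end{itemize}
The paper's route only has the advantage of reusing a computation already in the literature.

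One small correction to your remark (1). The Hilbert series has integer coefficients and the Taylor coefficients carry $1/i!$, so the identities should be read in $\mathbb{Q}(t)$ or $\mathbb{Q}[[t]]$, where $1-t$ is still invertible, not in $k[[t]]$. Reading them in $k[[t]]$ would lose information when $k$ has positive characteristic. This is harmless, since $I^{(i)}(G,-1)/i!=\sum_j s_j(G)\binom{j}{i}(-1)^{j-i}$ is an integer anyway.
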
 
\begin{proof}
  By the claim in the proof of \cite[Theorem 3.2]{OKeefe}, we have
  $$H_{ R/I(G)}(t)=\dfrac{(1- \sum_{s=0}^{\alpha -1}D_s)(1-t)^\alpha + \sum_{s=0}^{\alpha - 1}D_s(1-t)^{\alpha-s-1}}{(1-t)^\alpha},$$
  where 
  $$D_s=\sum_{i=s+1}^{\alpha }(-1)^{i-1-s}s_i(G)\binom{i}{i-1-s}, \ \  0 \leq s \leq \alpha -1.$$
   In the proof of \cite[Theorem 3.2]{OKeefe}, we have 
   $$\sum_{s=0}^{\alpha -1}D_s = \sum_{i=1}^{\alpha}(-1)^{i-1}s_i(G).$$
   On the other hand,
   $$I(G,-1)=\sum_{i=0}^{\alpha}(-1)^is_i(G).$$
   Hence,
   $$\sum_{i=1}^{\alpha}(-1)^{i-1}s_i(G)= 1 - I(G,-1), $$
   since $s_0(G)=1$. Therefore,

   $$1- \sum_{s=0}^{\alpha -1}D_s = 1 - \sum_{i=1}^{\alpha}(-1)^{i-1}s_i(G) = I(G,-1).$$
   We note that for $0 \leq s \leq \alpha$, 
   \begin{align*}\label{1}
     I^{(s)}(G,x) & = \sum_{i=s}^{\alpha} (i-s+1)\cdots i\,s_i(G)x^{i-s} \\
                  & = \sum_{i=s}^{\alpha} s_i(G) s! \binom{i}{i-s} x^{i-s}. \\
   \end{align*}
   It follows that
   \begin{align*}
     \sum_{i=s}^{\alpha} s_i(G) \binom{i}{i-s} x^{i-s} & = \dfrac{I^{(s)}(G,x)}{s!} \\
     \end{align*}
     Thus,
     \begin{align*}
       D_s & =\sum_{i=s+1}^{\alpha}(-1)^{i-1-s}s_i(G)\binom{i}{i-1-s}  \\
        & =  \dfrac{I^{(s+1)}(G,-1)}{(s+1)!}.
     \end{align*}
    Consequently,
     $$\sum_{s=0}^{\alpha - 1}D_s(1-t)^{\alpha-s-1} = \sum_{s=1}^{\alpha }\dfrac{I^{(s)}(G,-1)}{s!}(1-t)^{\alpha -s}.$$
     This completes the proof.
\end{proof}
Using Lemma \ref{Lemmamain}, we characterize $\deg\ h_{R/I(G)}(t)$ in the following theorem.
\begin{Theorem}\label{chacracteristic}
  Let $G$ be a simple graph. Then $$\deg\, h_{R/I(G)}(t) = \alpha(G)\ \ \text{if and only if} \ \ \ I(G,-1) \neq 0.$$ Furthermore, if $I(G,-1) = 0$ and $k = \min \{ i \mid I^{(i)}(G,-1) \neq 0 \}$, then 
  $$\deg\, h_{R/I(G)}(t) = \alpha(G) - k.$$
\end{Theorem}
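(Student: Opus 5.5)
We obtain the $h$-polynomial explicitly from Lemma~\ref{Lemmamain} and read off its degree. Put $\alpha=\alpha(G)$. Lemma~\ref{Lemmamain} gives
$$H_{R/I(G)}(t)=\frac{P(t)}{(1-t)^{\alpha}},\qquad P(t)=\sum_{i=0}^{\alpha}\frac{I^{(i)}(G,-1)}{i!}(1-t)^{\alpha-i}.$$
Since $\dim R/I(G)=\alpha$, the $h$-polynomial is the numerator over $(1-t)^{\alpha}$, once we check that this numerator is not divisible by $(1-t)$. Evaluating at $t=1$ kills every term except $i=\alpha$, so
$$P(1)=\frac{I^{(\alpha)}(G,-1)}{\alpha!}=s_\alpha(G).$$
This is the number of maximum independent sets, hence a positive integer. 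Thus $P(1)\neq 0$, and $h_{R/I(G)}(t)=P(t)$ by the uniqueness of the reduced form in \cite[Theorem 13.2]{Matsumura}.

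Next we compute $\deg P$. The natural move is the substitution $u=1-t$. In the variable $u$,
$$P=\sum_{i=0}^{\alpha}c_i\,u^{\alpha-i},\qquad c_i=\frac{I^{(i)}(G,-1)}{i!}.$$
The powers $u^{\alpha-i}$ have pairwise distinct exponents, and $u\mapsto 1-t$ is an affine change of variable, so it preserves degree. Hence $\deg_t P$ equals the largest exponent $\alpha-i$ with $c_i\ne 0$, that is, $\deg P=\alpha-k$ with $k=\min\{i\mid I^{(i)}(G,-1)\neq 0\}$.

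This minimum always exists, because $I^{(\alpha)}(G,-1)=\alpha!\,s_\alpha(G)\neq 0$. Its leading coefficient in $t$ is $(-1)^{\alpha-k}c_k\neq 0$, which confirms the degree.

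Both assertions follow at once. If $I(G,-1)\neq 0$, then $k=0$ and $\deg h=\alpha$. Conversely, if $\deg h=\alpha$, then $k=0$, so $I(G,-1)\ne 0$. If $I(G,-1)=0$, then $k\ge 1$ and $\deg h=\alpha-k$.

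The only step needing care is the identification of $P$ with the $h$-polynomial, i.e.\ the non-vanishing of $P(1)$. That reduces to $s_\alpha(G)\ge 1$, so there is no real obstacle.
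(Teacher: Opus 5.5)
Your proof is correct and follows the paper's route. Like the paper, you read off the $h$-polynomial from Lemma~\ref{Lemmamain} and take its degree to be $\alpha-k$, where $k$ is the least index with $I^{(k)}(G,-1)\neq 0$; this set is nonempty since $I^{(\alpha)}(G,-1)=\alpha!\,s_\alpha(G)$. Your explicit check that the numerator equals $s_\alpha(G)\neq 0$ at $t=1$, so that it really is $h_{R/I(G)}(t)$, is a step the paper leaves implicit.
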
 
\begin{proof}
  As a consequence of Lemma \ref{Lemmamain}, the $h$-polynomial of $R/I(G)$ has the form

  $$h_{R/I(G)}(t)= \sum_{i=0}^{\alpha}\dfrac{I^{(i)}(G,-1)}{i!}(1-t)^{\alpha -i}.$$
  Therefore, $\deg\, h_{R/I(G)}(t) = \alpha(G)$ if and only if $I(G,-1) \neq 0$. 
  Set $$A= \{i \mid I^{(i)}(G,-1) \neq 0\} \subset \N.$$ 
  Since $I^{(\alpha)}(G,-1) =  s_\alpha(G) \alpha ! \neq 0$, we have $A\neq \emptyset$. By the Well-Ordering Principle, there exists  $$k = \min \{ i \mid i \in A \}.$$
  Thus, if $I(G,-1) = 0$,  then 
   $\deg\, h_{R/I(G)}(t) = \alpha(G) - k$.
\end{proof}

\section{degree of $h$-polynomials of finite simple graphs}
Let $G=(V,E)$ be a finite simple graph. In this section, we answer Question~2 posed in \cite{OKeefe} by determining the classes of graphs for which $\deg h_{R/I(G)}(t)=\alpha(G)$.

Suppose that $G$ has several connected components. Then the degree of the $h$-polynomial of $G$ can be determined by studying the degrees of the $h$-polynomials of its connected components. This yields the following lemma.
\begin{Lemma}\label{cennected}
Let $G=\cup_{i=1}^kG_i$ be the disjoint union of the graphs $G_1,\ldots,G_k$. Then
$\deg\ h_{R/I(G)}(t) = \alpha(G)$ if and only if $\deg\ h_{R/I(G_i)}(t) = \alpha(G_i)$ for all $i=1,\ldots,k$. 
\end{Lemma}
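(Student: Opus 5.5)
The plan is to reduce the statement to Theorem \ref{chacracteristic} and then use the multiplicativity of the independence polynomial. By Theorem \ref{chacracteristic}, applied separately to $G$ and to each $G_i$, we have
\[
\deg\ h_{R/I(G)}(t)=\alpha(G) \iff I(G,-1)\neq 0,
\]
and, for each $i$,
\[
\deg\ h_{R/I(G_i)}(t)=\alpha(G_i) \iff I(G_i,-1)\neq 0 .
\]
Here $h_{R/I(G_i)}$ is computed in the polynomial ring on the vertices of $G_i$. If one prefers the ambient ring $R$, note that adding free variables only multiplies the Hilbert series by a power of $1/(1-t)$, so the $h$-polynomial is unchanged. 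Either way, the statement becomes equivalent to
\[
I(G,-1)\neq 0 \iff I(G_i,-1)\neq 0 \ \text{ for all } i .
\]

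Next, Lemma \ref{Joint}(i) together with induction on $k$ gives
\[
I(G,t)=\prod_{i=1}^k I(G_i,t).
\]
Evaluating at $t=-1$ yields $I(G,-1)=\prod_{i=1}^k I(G_i,-1)$. Since $\mathbb{Z}$ is an integral domain, this product is nonzero exactly when every factor is nonzero, which closes the chain of equivalences.

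No step here is a real obstacle. The only points needing care are, first, checking that Theorem \ref{chacracteristic} applies to each $G_i$ on its own vertex set, and second, the induction that extends Lemma \ref{Joint}(i) from two components to $k$. The second is immediate because a disjoint union of disjoint unions is again a disjoint union.

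As an optional cross-check, one can argue directly with Hilbert series. They are multiplicative, so $h_{R/I(G)}=\prod_i h_{R/I(G_i)}$, and $\alpha(G)=\sum_i \alpha(G_i)$. Hence $\deg h_{R/I(G)}=\sum_i \deg h_{R/I(G_i)}$. Each term satisfies $\deg h_{R/I(G_i)}\le \alpha(G_i)$, so the total equals $\alpha(G)$ if and only if every term attains its bound. This gives a second proof independent of the evaluation at $-1$.
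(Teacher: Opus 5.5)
Your main argument is the paper's proof: reduce via Theorem~\ref{chacracteristic} to whether $I(\cdot,-1)$ vanishes, then use $I(G,t)=\prod_{i=1}^k I(G_i,t)$ from Lemma~\ref{Joint}(i). It is correct. Your optional cross-check, which uses $h_{R/I(G)}=\prod_i h_{R/I(G_i)}$, $\alpha(G)=\sum_i\alpha(G_i)$ and $\deg h_{R/I(G_i)}\le\alpha(G_i)$, is also valid and gives an independent proof that avoids evaluating at $-1$.
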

\begin{proof}
  By Lemma \ref{Joint} (i) and induction, we have $$I(G,t)=\prod_{i=1}^{k}I(G_i,t).$$
Therefore,
  $I(G,-1) \neq 0$ if and only if $I(G_i,-1) \neq 0$ for all $i=1,\ldots,k$. According to Theorem \ref{chacracteristic}, the result follows.
\end{proof}

To study the degree of the $h$-polynomial of finite simple graphs, we first determine the degree of the $h$-polynomial of star graphs.
\begin{Lemma}\label{star}
  Let $S_n$ be a star graph where $n \geq 1$. Then $I(S_n,-1)=-1$ and
$\deg \ h_{R/I(S_n)}(t) = \alpha(S_n).$
\end{Lemma}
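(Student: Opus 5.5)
We will compute $I(S_n,t)$ explicitly and then invoke Theorem \ref{chacracteristic}. Take the star $S_n$ to have a center $x_0$ adjacent to the $n$ leaves $x_1,\ldots,x_n$, which are pairwise non-adjacent. We apply Lemma \ref{Recusive} (i) at the center $x_0$.

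Deleting the center leaves $S_n - x_0$, which is $n$ isolated vertices. By Lemma \ref{Joint} (i) its independence polynomial is $(1+t)^n$, since a single vertex has independence polynomial $1+t$. Deleting the closed neighborhood leaves $S_n - N[x_0]$, which is the empty graph, whose independence polynomial is $1$. Therefore
$$I(S_n,t)=(1+t)^n+t.$$
One can also check this directly: the independent sets are $\{x_0\}$ together with all subsets of the leaves.

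Evaluating at $t=-1$ and using $n\geq 1$ gives $I(S_n,-1)=0^n-1=-1$. In particular $I(S_n,-1)\neq 0$, so Theorem \ref{chacracteristic} yields $\deg h_{R/I(S_n)}(t)=\alpha(S_n)$. Here $\alpha(S_n)=n$, although we do not need this value.

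There is no real obstacle. The only point needing care is $n\ge 1$, so that $(1+t)^n$ vanishes at $-1$; for $n=0$ the formula would give $0$ instead.
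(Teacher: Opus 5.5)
Your proposal is correct and follows essentially the same route as the paper: you expand at the center via Lemma \ref{Recusive} (i) to get $I(S_n,t)=(1+t)^n+t$, evaluate at $-1$, and apply Theorem \ref{chacracteristic}. Your added remarks (the direct count of independent sets and why $n\ge 1$ is needed) are accurate.
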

\begin{proof}
  Suppose that $x_0$ is the center vertex of $S_n$ and $x_1,\ldots,x_n$ are the $n$ leaves of $S_n$. By Lemma \ref{Recusive} (i), we have
  $$ I(S_n,t)  = I(S_n-x_0,t) + t I(S_n-N[x_0],t).$$
  Moreover, $I(S_n-N[x_0],t)=I(\emptyset,t)=1$, therefore $I(S_n-N[x_0],-1)=1$. By Lemma \ref{Joint} (i), we have
  $$I(S_n-x_0,t)=I(\{x_1\},t)\cdots I(\{x_n\},t)=(1+t)^n.$$
  Thus, $$ I(S_n,t)= (1+t)^n + t .$$ 
  Therefore, 
   $$ I(S_n,-1) = -1. $$
According to Theorem \ref{chacracteristic}, we have $\deg \ h_{R/I(S_n)} = \alpha(S_n).$
\end{proof}
In the following lemma, we describe two classes of graphs for which $\deg\ h_{R/I(G)} < \alpha(G)$. 
The first class consists of finite simple graphs with two leaf vertices at distance $3$, 
and the second class consists of graphs containing at least one isolated vertex.
 \begin{Lemma}\label{distance3}
   Let $G$ be a finite simple graph with two leaves at distance $3$ or with at least one isolated vertex. Then $I(G,-1)=0$ and $\deg\ h_{R/I(G)}(t) < \alpha(G)$.
 \end{Lemma}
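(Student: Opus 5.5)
By Theorem \ref{chacracteristic}, it suffices to show $I(G,-1)=0$ in both situations; the strict inequality $\deg h_{R/I(G)}(t)<\alpha(G)$ then follows immediately.

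\emph{Isolated vertex.} Suppose $v$ is an isolated vertex. Then $G=\{v\}\cup(G-v)$ is a disjoint union. By Lemma \ref{Joint} (i), $I(G,t)=(1+t)\,I(G-v,t)$, which vanishes at $t=-1$.

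\emph{Two leaves at distance $3$.} Let $x$ and $y$ be leaves with $d(x,y)=3$. Write $N(x)=\{u\}$ and $N(y)=\{w\}$. Since the distance is $3$, we have $u\neq w$, $uw\in E$, and $x,y,u,w$ are distinct. I would apply Lemma \ref{Recusive} (ii) to the edge $uw$:
$$I(G,t)=I(G-uw,t)-t^2\,I\bigl(G-(N(u)\cup N(w)),t\bigr).$$
\begin{itemize}
\item In $G'=G-(N(u)\cup N(w))$, the vertices $u$ and $w$ are deleted (since $w\in N(u)$ and $u\in N(w)$), as are $x\in N(u)$ and $y\in N(w)$. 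So I need a different source of a $(1+t)$ factor for this term.
\item Instead, I would expand at a vertex using Lemma \ref{Recusive} (i), choosing $u$:
$$I(G,t)=I(G-u,t)+t\,I(G-N[u],t).$$
In $G-u$, the vertex $x$ is isolated, so by the first case $I(G-u,-1)=0$. In $G-N[u]$, the vertex $w$ is removed but $y$ is not, because $d(u,y)=2$. Hence $y$ is isolated in $G-N[u]$, and again $I(G-N[u],-1)=0$.
\end{itemize}
Summing, $I(G,-1)=0$.

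The main point to verify is that $y\notin N[u]$, which holds because $d(x,y)=3$ forces $d(u,y)=2$: otherwise $x,u,y$ would be a path of length $2$. With that, the argument reduces entirely to the isolated-vertex case and Theorem \ref{chacracteristic}.
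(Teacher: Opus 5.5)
Your proof is correct and is essentially the paper's argument: expand $I(G,t)$ at the neighbor $u$ of one leaf via Lemma~\ref{Recusive}~(i), so that $x$ is isolated in $G-u$ and the other leaf $y$ is isolated in $G-N[u]$, then conclude with Theorem~\ref{chacracteristic}. You correctly identify the leaf itself as the isolated vertex in $G-N[u]$, whereas the paper's text slips and names the leaf's neighbor. The abandoned attempt via Lemma~\ref{Recusive}~(ii) on the edge $uw$ is unnecessary and should be cut from a final write-up.
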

\begin{proof}
If $G$ contains an isolated vertex, then $I(G,-1)=0$.
Suppose that $G$ contains two leaf vertices $u$ and $v$ such that $d(u,v)=3$, where $w \in N(u)$ and $y \in N(v)$. By Lemma~\ref{Recusive} (i), we have
\[
I(G,t) = I(G-w,t) + t\,I(G-N[w],t).
\]
Since $d(u,v)=3$, the graph $G-w$ contains an isolated vertex, namely $u$, and the graph
$G-N[w]$ also contains an isolated vertex, namely $y$. Therefore,
\[
I(G-w,-1)=0 \quad \text{and} \quad I(G-N[w],-1)=0.
\]
Hence, $I(G,-1)=0$. By Theorem \ref{chacracteristic}, it follows that
$\deg\ h_{R/I(G)}(t) < \alpha(G)$.
\end{proof}

By Lemma~\ref{cennected}, it suffices to study the value of $I(G,-1)$ for finite simple connected graphs. The following proposition gives the desired result.
\begin{Proposition}\label{generalfactor}
Let $G=(V,E)$ be a finite simple connected graph.
Define $G^{(0)}:=G$. For each $k\ge 0$, let $U_L^{(k)}$ be the set of leaves of $G^{(k)}$, and let
\[
U_2^{(k)}:=\{v\in V(G^{(k)}) : \deg_{G^{(k)}}(v)\ge 2 \text{ and } d_{G^{(k)}}(v,U_L^{(k)})=2\},
\]
where $G^{(k+1)}$ is the graph obtained from $G^{(k)}$ by deleting all vertices in
$U_2^{(k)}$ and all incident edges.

Then this process terminates after finitely many steps. Moreover, one of the following holds:

\begin{enumerate}
\item There exists a step $k$ such that $G^{(k)}$ contains an isolated vertex or two leaf vertices at distance $3$. In this case,
\[
I(G,-1)=0.
\]

\item Otherwise, the process terminates at a subgraph $G^{(m)}$, and the process decomposes $G$
into a disjoint union of stars $S_1,\ldots,S_r$ and graphs $G_1,\ldots,G_s$ in which all vertices
have degree at least $2$.
In this case,
\[
I(G,-1)=\prod_{i=1}^{r} I(S_i,-1)\prod_{j=1}^{s} I(G_j,-1).
\]
\end{enumerate}
\end{Proposition}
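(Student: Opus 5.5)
The key tool will be a single invariance claim: deleting a vertex at distance exactly $2$ from a leaf does not change the value of the independence polynomial at $-1$. Let $u$ be a leaf of a graph $H$ with unique neighbor $w$, and let $v$ satisfy $d_H(v,u)=2$. We apply Lemma~\ref{Recusive}~(i) at $v$:
\[
I(H,t)=I(H-v,t)+t\,I(H-N[v],t).
\]
Since $d(v,u)=2$, the vertex $w$ lies in $N(v)$ but $u$ does not. Hence $u$ is an isolated vertex of $H-N[v]$, so $I(H-N[v],t)=(1+t)\,I(H-N[v]-u,t)$ by Lemma~\ref{Joint}~(i). This gives
\[
I(H,-1)=I(H-v,-1).
\]
To delete the whole set $U_2^{(k)}$ we iterate this one vertex at a time. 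We must check that after removing some vertices of $U_2^{(k)}$, each remaining vertex of $U_2^{(k)}$ is still at distance $2$ from a leaf. The witnessing path $v-w-u$ survives, because $w$ and $u$ are at distance $1$ and $0$ from $u$, so neither belongs to $U_2^{(k)}$. The leaf $u$ also stays a leaf, since only vertices at distance $2$ from leaves are removed and $w$ is never removed. One must watch whether $w$ can become isolated-adjacent in a way that breaks distance $2$. However, $u$ is still adjacent to $w$, so $u$ remains a leaf, and $v$, if not yet deleted, is still adjacent to $w$. Thus we obtain $I(G^{(k)},-1)=I(G^{(k+1)},-1)$ for every $k$, and by induction $I(G,-1)=I(G^{(k)},-1)$.

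Termination is immediate. Each step with $U_2^{(k)}\neq\emptyset$ strictly decreases the number of vertices. We define the process to stop at the first $m$ with $U_2^{(m)}=\emptyset$, or at a step where case~(1) occurs.

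For case~(1), if some $G^{(k)}$ has an isolated vertex or two leaves at distance $3$, Lemma~\ref{distance3} gives $I(G^{(k)},-1)=0$, hence $I(G,-1)=0$.

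For case~(2), suppose $U_2^{(m)}=\emptyset$ and no isolated vertices appear. We analyze the components of $G^{(m)}$, and the product formula then follows from Lemma~\ref{Joint}~(i). Take a component $C$ containing a leaf $u$ with neighbor $w$. Every neighbor $x\neq u$ of $w$ is at distance $2$ from $u$. Since $U_2^{(m)}=\emptyset$, such an $x$ must itself have degree $1$, that is, be a leaf (it cannot be isolated, being adjacent to $w$). So $N(w)$ consists entirely of leaves, and connectivity forces $C$ to be the star centered at $w$. The case $C=K_2$ is the star $S_1$. The components with no leaves and no isolated vertices have all degrees at least $2$, and these are the $G_j$.

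The main obstacle I expect is the bookkeeping in the simultaneous-deletion step: justifying that deleting all of $U_2^{(k)}$ is a sequence of single deletions, each of which is at distance $2$ from a current leaf. The argument above, that the path $v-w-u$ and the leaf status of $u$ persist, handles this. I would state it as a separate claim before the induction on $k$.
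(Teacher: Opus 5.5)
Your route is the same as the paper's. You delete the vertices of $U_2^{(k)}$ one at a time via Lemma~\ref{Recusive}~(i), note that the witnessing leaf becomes isolated in $H-N[v]$ so the second term vanishes at $t=-1$, and then read off the components of the terminal graph. Your bookkeeping for the one-at-a-time deletion is correct: the path $v-w-u$ survives because $w,u\notin U_2^{(k)}$. It is in fact more careful than the paper's, which simply asserts that $H_{i-1}-N[u_i]$ has an isolated vertex.

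However, one step in case~(2) fails as written. You assume only that $U_2^{(m)}=\emptyset$ and that no isolated vertices appear, and then claim every neighbor $x\neq u$ of $w$ must be a leaf. But $x\notin U_2^{(m)}$ only says $d(x,U_L^{(m)})\neq 2$. A non-leaf $x$ at distance $2$ from $u$ can still have $d(x,U_L^{(m)})=1$, i.e.\ be adjacent to another leaf $\ell$.

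The path $P_4=a-b-c-d$ is a counterexample to your claim. Here $U_2=\emptyset$ and there are no isolated vertices, yet the neighbor $c$ of $b$ has degree $2$. So $P_4$ is neither a star nor of minimum degree $2$, and indeed $I(P_4,-1)=0$.

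The repair is to use the full hypothesis of case~(2), namely that case~(1) never occurs. Suppose such an $x$ were adjacent to a leaf $\ell$. We have $N(u)=\{w\}$, $N(\ell)=\{x\}$, $w\neq x$, and $w$ is not a leaf since it is adjacent to both $u$ and $x$. Hence $d(u,\ell)=3$, giving two leaves at distance $3$ in $G^{(m)}$, which is excluded. With this added, your component analysis goes through. The paper's own argument has the same slip (it asserts that a non-leaf $x$ with $d(x,v)=2$ lies in $U_2$) and is fixed the same way.
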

\begin{proof}
Let $U_L$ denote the set of leaves of $G$ and $U_2:=U_2^{(0)}(G)$ be the set of all non-leaf vertices of $G$ whose distance to some leaf is equal to $2$. If there exists a vertex in $U_2$ that is adjacent to a leaf vertex, then by Lemma
\ref{distance3} we have $I(G,-1)=0$. Hence, we may assume that no vertex in $U_2$ is adjacent to a leaf vertex.

Suppose that $U_2 = \{u_1,\ldots,u_m\}$. 
Set $H_i := H_{i-1} - u_i$ for $i=1,\ldots,m$, where $H_0 := G$.
By Lemma~\ref{Recusive} (i), we have
\begin{equation}\label{Plus4}
  I(H_{i-1},t) = I(H_i,t) + t\, I(H_{i-1}-N[u_i],t)
\end{equation}
for $i=1,\ldots,m$. Summing both sides of \eqref{Plus4} over $i=1,\ldots,m$, we obtain
\[
I(G,t) = I(H_m,t) + t \sum_{i=1}^{m} I(H_{i-1}-N[u_i],t).
\]
Moreover, for each $i=1,\ldots,m$, the graph $H_{i-1}-N[u_i]$ contains at least one isolated vertex. Hence,
\[
\sum_{i=1}^{m} I(H_{i-1}-N[u_i],-1) = 0.
\]
Furthermore, we observe that the graph $H_m = G - \{u_1,\ldots,u_m\}$ is the disjoint union of stars $S_1,\ldots,S_r$ and $G_1$. That is,
$$H_m = S_1 \cup \cdots \cup S_r \cup G_1.$$
By Lemma~\ref{Joint} (i), we have
\begin{equation}\label{plus5}
  I(G,-1) = \prod_{i=1}^{r} I(S_i,-1)\, I(G_1,-1).
\end{equation}
Consider the following two cases:\\
{\bf Case 1.} Suppose that $G_1$ contains at least one isolated vertex or two leaf vertices at distance $3$. By Lemma \ref{distance3}, 
we have $I(G,-1)=0$, and we terminate the process.\\
{\bf Case 2.} Assume that $G_1$ contains no isolated vertices and has no two leaf vertices at distance $3$.
 If  $G_1$  is disconnected with $k$ connected components $G_{11},\ldots,G_{1k}$, then by Lemma \ref{Joint} (i), we have
\[
I(G_1,-1) = \prod_{j=1}^{k} I(G_{1j},-1).
\]
Hence, $I(G_1,-1) \neq 0$ if and only if  $I(G_{1j},-1) \neq 0$ for all $j=1,\ldots,k$. Thus,  without loss of generality, we may assume that $G_1$ is connected.  
If $U_2(G_1)\neq \emptyset$, we continue to apply the process to $G_1$. 
The process described above terminates after finitely many steps. 
Consequently, after finitely many iterations, the resulting graph either contains an isolated vertex or has two leaf vertices at distance $3$ or has no vertex whose distance to a leaf is equal to $2$.
Without loss of generality, we may assume that the process terminates at the graph $G_1$. 
Then $G_1$ is either a star or a graph in which all vertices have degree at least $2$. Indeed, if $G_1$ is a star or a graph in which all vertices have degree at least $2$, then $U_2(G_1)=\emptyset$. 
Conversely, suppose that $U_2(G_1)=\emptyset$ and that $G_1$ is neither a star nor a graph in which all vertices have degree at least $2$. 
If $G_1$ contains a leaf, say $v$. 
Let $w$ be a vertex of $G_1$ such that $d(v,w)=1$. 
Since $G_1$ is not a star, there exists a vertex $x\neq v$ such that $d(w,x)=1$ and $x$ is not a leaf. 
Thus, $x$ is a non-leaf vertex satisfying $d(x,v)=2$, which implies that $x\in U_2(G_1)$, a contradiction to the assumption that $U_2(G_1)=\emptyset$. 
Therefore, $G_1$ must be either a star or a graph in which all vertices have degree at least $2$.
Hence,
\[
I(G,-1) = \prod_{i} I(S_i,-1)\, \prod_{j} I(G_j,-1),
\]
where each $G_j$ is a graph in which all vertices have degree at least $2$ and each $S_i$ is a star graph.
\end{proof}
We call the process described in Proposition~\ref{generalfactor} the \emph{elimination process}.\\
The following example illustrates this elimination process.
\begin{Example}
Let $G$ be the graph shown below, with $U_2(G)=\{u_1,\ldots,u_m\}$.
\begin{center}
\begin{tikzpicture}[
    dot/.style={circle, draw, fill=white, inner sep=1.2pt},
    every label/.style={font=\scriptsize}
]


    \coordinate (LL) at (-1, -1);
    \coordinate (LR) at (8, -1);
    \coordinate (UL) at (-1, 0.5);
    \coordinate (UR) at (8, 0.5);

    \draw (LL) -- (LR) -- (UR); 
    \draw (LL) -- (UL);

    \draw (UL) -- (UR);
    \node at (4.5, 0) {};

    \node[dot] (v1) at (0.5, 1.5) {};
    \node[dot] (v2) at (3.5, 1.5) {};
    \node[dot] (vr) at (7, 1.5) {};

    \node[dot, label=below:$u_1$] (u1) at (0.5, 0.5) {};
    \node[dot, label=below:$u_2$] (u2) at (3.5, 0.5) {};
    \node[dot, label=below:$u_3$] (u3) at (5, 0.5) {};
    \node[dot, label=below:$u_m$] (um) at (7, 0.5) {};

    \node[dot, label=above:$x_1^{(1)}$] (x11) at (-0.2, 2.5) {};
    \node[dot, label=above:$x_{s_1}^{(1)}$] (x1s) at (1.2, 2.5) {};
    \draw (v1) -- (x11); \draw (v1) -- (x1s);
    \node at (0.5, 2.3) {$\dots$};

    \node[dot, label=above:$x_1^{(2)}$] (x21) at (2.8, 2.5) {};
    \node[dot, label=above:$x_{s_2}^{(2)}$] (x2s) at (4.2, 2.5) {};
    \draw (v2) -- (x21); \draw (v2) -- (x2s);
    \node at (3.5, 2.3) {$\dots$};

    \node[dot, label=above:$x_1^{(m)}$] (xm1) at (6.3, 2.5) {};
    \node[dot, label=above:$x_{s_m}^{(m)}$] (xms) at (7.7, 2.5) {};
    \draw (vr) -- (xm1); \draw (vr) -- (xms);
    \node at (7, 2.3) {$\dots$};
    \draw (u1) -- (v1) (u1) -- (v2) (u2) -- (v1) (u2) -- (v2) (u3) -- (vr) (um) -- (vr) (um) -- (v2); 

\end{tikzpicture}\\
\end{center}
The subgraph $G-N[u_1]$ is shown below.
\begin{center}
\begin{tikzpicture}[
    dot/.style={circle, draw, fill=white, inner sep=2pt},
    every label/.style={font=\scriptsize},
ring/.style={circle, draw, fill=black, inner sep=1pt}
]


    \coordinate (LL) at (-1, -1);
    \coordinate (LR) at (8, -1);
    \coordinate (UL) at (-1, 0.5);
    \coordinate (UR) at (8, 0.5);

    \draw (LL) -- (LR) -- (UR); 
    \draw[dotted] (LL) -- (UL);

    \node at (4.5, 0) {};

    \node[ring] (v1) at (0.5, 1.5) {};
    \node[ring] (v2) at (3.5, 1.5) {};
    \node[dot] (vr) at (7, 1.5) {};

    \node[ring] (u1) at (0.5, 0.5) {};
    \node[ring] (u2) at (3.5, 0.5) {};
    \node[dot, label=below:$u_3$] (u3) at (5, 0.5) {};
    \node[dot, label=below:$u_m$] (um) at (7, 0.5) {};

    \node[dot, label=above:$x_1^{(1)}$] (x11) at (-0.2, 2.5) {};
    \node[dot, label=above:$x_{s_1}^{(1)}$] (x1s) at (1.2, 2.5) {};
    \draw[dotted] (v1) -- (x11); \draw[dotted] (v1) -- (x1s);
    \node at (0.5, 2.3) {$\dots$};

    \node[dot, label=above:$x_1^{(2)}$] (x21) at (2.8, 2.5) {};
    \node[dot, label=above:$x_{s_2}^{(2)}$] (x2s) at (4.2, 2.5) {};
    \draw[dotted] (v2) -- (x21); \draw[dotted] (v2) -- (x2s);
    \node at (3.5, 2.3) {$\dots$};

    \node[dot, label=above:$x_1^{(m)}$] (xm1) at (6.3, 2.5) {};
    \node[dot, label=above:$x_{s_m}^{(m)}$] (xms) at (7.7, 2.5) {};
    \draw (vr) -- (xm1); \draw (vr) -- (xms);
    \node at (7, 2.3) {$\dots$};
    \draw 
(u3) -- (vr) (um) -- (vr) ; 
 \draw[dotted](u1) -- (v1) (u1) -- (v2) (um) -- (v2) (u2) -- (v1) (u2) -- (v2) (u1)-- (u2) (u2) -- (u3) (UL) -- (u1);
\draw (u3)-- (um) (um) -- (UR);
\end{tikzpicture}\\
\end{center}
The subgraph $H_1 := G - u_1$ is shown below.
\begin{center}
\begin{tikzpicture}[
    dot/.style={circle, draw, fill=white, inner sep=2pt},
    every label/.style={font=\scriptsize},
ring/.style={circle, draw, fill=black, inner sep=1pt}
]


    \coordinate (LL) at (-1, -1);
    \coordinate (LR) at (8, -1);
    \coordinate (UL) at (-1, 0.5);
    \coordinate (UR) at (8, 0.5);

    \draw (LL) -- (LR) -- (UR); 
    \draw (LL) -- (UL);

    \node at (4.5, 0) {};

    \node[dot] (v1) at (0.5, 1.5) {};
    \node[dot] (v2) at (3.5, 1.5) {};
    \node[dot] (vr) at (7, 1.5) {};

    \node[ring] (u1) at (0.5, 0.5) {};
    \node[dot, label=below:$u_2$] (u2) at (3.5, 0.5) {};
    \node[dot, label=below:$u_3$] (u3) at (5, 0.5) {};
    \node[dot, label=below:$u_m$] (um) at (7, 0.5) {};

    \node[dot, label=above:$x_1^{(1)}$] (x11) at (-0.2, 2.5) {};
    \node[dot, label=above:$x_{s_1}^{(1)}$] (x1s) at (1.2, 2.5) {};
    \draw (v1) -- (x11); \draw (v1) -- (x1s);
    \node at (0.5, 2.3) {$\dots$};

    \node[dot, label=above:$x_1^{(2)}$] (x21) at (2.8, 2.5) {};
    \node[dot, label=above:$x_{s_2}^{(2)}$] (x2s) at (4.2, 2.5) {};
    \draw (v2) -- (x21); \draw (v2) -- (x2s);
    \node at (3.5, 2.3) {$\dots$};

    \node[dot, label=above:$x_1^{(m)}$] (xm1) at (6.3, 2.5) {};
    \node[dot, label=above:$x_{s_m}^{(m)}$] (xms) at (7.7, 2.5) {};
    \draw (vr) -- (xm1); \draw (vr) -- (xms);
    \node at (7, 2.3) {$\dots$};
    \draw 
(u3) -- (vr) (um) -- (vr) (u2) -- (v1) (u2) -- (v2) (um) -- (v2); 
 \draw[dotted](u1) -- (v1) (u1) -- (v2)   ;
\draw (u3)-- (um) (um) -- (UR) (u2) -- (u3);
\draw[dotted] (u1)-- (u2)  (UL) -- (u1);
\end{tikzpicture}\\
\end{center}
Similarly, the subgraph $H_{i-1}-N[u_i]$ for $i=1,\ldots,m$ also contains at least one isolated vertex.
Finally, define $H_m := G-\{u_1,\ldots,u_m\}$.  Then $H_m$ is a disjoint union of star graphs and $G_1$.
\begin{center}
\begin{tikzpicture}[
    dot/.style={circle, draw, fill=white, inner sep=2pt},
    every label/.style={font=\scriptsize},
ring/.style={circle, draw, fill=black, inner sep=1pt}
]


    \coordinate (LL) at (-1, -1);
    \coordinate (LR) at (8, -1);
    \coordinate (UL) at (-1, 0.5);
    \coordinate (UR) at (8, 0.5);

    \draw (LL) -- (LR) -- (UR); 
    \draw (LL) -- (UL);

    \node at (4.5, 0) {};

    \node[dot] (v1) at (0.5, 1.5) {};
    \node[dot] (v2) at (3.5, 1.5) {};
    \node[dot] (vr) at (7, 1.5) {};

    \node[ring] (u1) at (0.5, 0.5) {};
    \node[ring] (u2) at (3.5, 0.5) {};
    \node[ring] (u3) at (5, 0.5) {};
    \node[ring] (um) at (7, 0.5) {};

    \node[dot, label=above:$x_1^{(1)}$] (x11) at (-0.2, 2.5) {};
    \node[dot, label=above:$x_{s_1}^{(1)}$] (x1s) at (1.2, 2.5) {};
    \draw (v1) -- (x11); \draw (v1) -- (x1s);
    \node at (0.5, 2.3) {$\dots$};

    \node[dot, label=above:$x_1^{(2)}$] (x21) at (2.8, 2.5) {};
    \node[dot, label=above:$x_{s_2}^{(2)}$] (x2s) at (4.2, 2.5) {};
    \draw (v2) -- (x21); \draw (v2) -- (x2s);
    \node at (3.5, 2.3) {$\dots$};
 \node[draw=none, fill=none] at (3.5, -0.5) { $G_1$};
    \node[dot, label=above:$x_1^{(m)}$] (xm1) at (6.3, 2.5) {};
    \node[dot, label=above:$x_{s_m}^{(m)}$] (xms) at (7.7, 2.5) {};
    \draw (vr) -- (xm1); \draw (vr) -- (xms);
    \node at (7, 2.3) {$\dots$};

 \draw[dotted](u1) -- (v1) (u1) -- (v2)  (u2) -- (v1) (u2) -- (v2) (u2) -- (u3)  (u3)-- (um) (um) -- (UR) (u3) -- (vr) (um) -- (vr)   (um) -- (v2) (u1)-- (u2)  (UL) -- (u1);

\end{tikzpicture}\\
\end{center}
\end{Example}

We are now in a position to derive the main result of this section.
 \begin{Theorem}\label{general}
Let $G=(V,E)$ be a finite simple connected graph. 
Suppose that the elimination process decomposes $G$ into the disjoint union of star graphs
$S_1,\ldots,S_r$ and graphs $G_1,\ldots,G_s$, where every vertex of each $G_j$ has degree at least $2$.
Then
$$
\deg\ h_{R/I(G)}(t) = \alpha(G)\ \ \text{if and only if} \ \ \ I(G_j,-1) \neq 0$$ for all $j=1,\ldots,s$.
 \end{Theorem}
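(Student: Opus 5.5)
The plan is to reduce the statement to Theorem \ref{chacracteristic}, which says that $\deg h_{R/I(G)}(t)=\alpha(G)$ holds exactly when $I(G,-1)\neq 0$. So it suffices to show that, under the hypothesis of the theorem, $I(G,-1)\neq 0$ if and only if $I(G_j,-1)\neq 0$ for all $j=1,\ldots,s$.

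The hypothesis says the elimination process ends in alternative (2) of Proposition \ref{generalfactor}. It decomposes $G$ into stars $S_1,\ldots,S_r$ and graphs $G_1,\ldots,G_s$ of minimum degree at least $2$. Alternative (2) of that proposition then gives the factorization
\[
I(G,-1)=\prod_{i=1}^{r} I(S_i,-1)\prod_{j=1}^{s} I(G_j,-1).
\]
By Lemma \ref{star}, each factor $I(S_i,-1)$ equals $-1$ and is therefore nonzero. Hence $I(G,-1)=\pm\prod_{j} I(G_j,-1)$, and this is nonzero exactly when every $I(G_j,-1)\neq 0$. Applying Theorem \ref{chacracteristic} then finishes the argument.

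The main point needing care is the bookkeeping behind the factorization in the iterated process. At each stage $k$, equation \eqref{Plus4} and the isolated-vertex argument give $I(G^{(k)},-1)=I(G^{(k+1)},-1)$, where $G^{(k+1)}$ is the graph after deleting $U_2^{(k)}$. Lemma \ref{Joint}(i) then splits $I(G^{(k+1)},-1)$ over the components, so components that are stars or have minimum degree at least $2$ can be set aside, and the process continues on the remaining components. I would make this an induction on $|V|$, which is the content of Proposition \ref{generalfactor}, and cite it directly rather than redo it.

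A degenerate star must also be handled. Lemma \ref{star} covers only $S_n$ with $n\ge 1$, while an isolated vertex $S_0$ has $I(S_0,-1)=0$. This case cannot occur here, because an isolated vertex at some step puts us in alternative (1), which the hypothesis excludes.
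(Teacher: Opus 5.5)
Your proposal is correct and follows the paper's proof essentially verbatim: factor $I(G,-1)$ via Proposition \ref{generalfactor}(2), use Lemma \ref{star} to see that each star factor equals $-1$, and conclude with Theorem \ref{chacracteristic}. Your extra remark ruling out the degenerate star $S_0$ (an isolated vertex, which would force alternative (1)) is a harmless clarification that the paper leaves implicit.
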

\begin{proof}
Using the elimination process, $G$ decomposes into a disjoint union of stars
$S_1,\ldots,S_r$ and graphs $G_1,\ldots,G_s$, where every vertex of each $G_j$ has degree at least $2$.
By Proposition \ref{generalfactor} (2), we have
\[
I(G,-1)=\prod_{i=1}^{r} I(S_i,-1)\prod_{j=1}^{s} I(G_j,-1).
\]
According to Lemma \ref{star}, we have $I(S_i,-1)=-1\neq 0$ for all $i$.
Therefore,
\[
I(G,-1)\neq 0
\quad \text{if and only if} \quad
\prod_{j=1}^{s} I(G_j,-1)\neq 0,
\]
which holds if and only if $I(G_j,-1)\neq 0$ for all $j=1,\ldots,s$.
This completes the proof.
\end{proof}

\section{Applications to Some Fundamental Graphs}
In this section, we apply the elimination process to derive combinatorial formulas for the degree of the $h$-polynomial of several familiar classes of graphs, including paths, cycles, multipartite graphs, Cameron-Walker graphs, and antiregular graphs.
\subsection{Degree of $h$-polynomials of paths and cycles}
In this subsection, we determine the degree of the $h$-polynomial of the edge ideal 
of paths and cycles. We first consider the case of paths.

\begin{Theorem}\label{pathmain}
  Let $P_n$ be the path on $n \geq 1$ vertices. Then 
   $$\deg\ h_{R/I(P_n)} = \alpha(P_n) \ \ \ \text{if and only if} \ \ \ n \equiv 0, 2 \pmod 3,$$
   and 
   $$\deg\ h_{R/I(P_n)} = \alpha(P_n) - 1 \ \ \ \text{if and only if} \ \ \ n \equiv 1 \pmod 3.$$
  
\end{Theorem}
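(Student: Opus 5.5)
Since every case of the statement is read off from Theorem \ref{chacracteristic}, the plan is to compute $I(P_n,-1)$ and, when it vanishes, $I'(P_n,-1)$. The tool is Lemma \ref{Recusive} (i), applied at an endpoint $x_n$ of $P_n$ (with neighbor $x_{n-1}$). Since $P_n - x_n = P_{n-1}$ and $P_n - N[x_n] = P_{n-2}$, this gives the recursion
\[
I(P_n,t) = I(P_{n-1},t) + t\, I(P_{n-2},t), \qquad n \geq 2,
\]
with $I(P_0,t)=1$ (empty graph) and $I(P_1,t)=1+t$.

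First step: set $a_n := I(P_n,-1)$. Then $a_n = a_{n-1} - a_{n-2}$, with $a_0=1$ and $a_1=0$. This sequence is periodic of period $6$:
\[
1,0,-1,-1,0,1,1,0,\ldots
\]
Hence $a_n = 0$ exactly when $n \equiv 1 \pmod 3$, and $a_n=\pm 1$ otherwise. By Theorem \ref{chacracteristic}, $\deg h_{R/I(P_n)} = \alpha(P_n)$ if and only if $n \equiv 0,2 \pmod 3$. This settles the first equivalence.

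Second step: for $n \equiv 1 \pmod 3$ it remains to show $I'(P_n,-1)\neq 0$; then Theorem \ref{chacracteristic} with $k=1$ gives $\deg h = \alpha(P_n)-1$. Conversely, when $n\not\equiv 1$ the degree equals $\alpha(P_n)$, not $\alpha(P_n)-1$, so the second equivalence follows as well. Differentiating the recursion gives
\[
I'(P_n,t)=I'(P_{n-1},t)+I(P_{n-2},t)+tI'(P_{n-2},t).
\]
Setting $b_n := I'(P_n,-1)$, this becomes
\[
b_n = b_{n-1} - b_{n-2} + a_{n-2}, \qquad b_0=0,\ b_1=1.
\]
I will not hunt for a closed form. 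Instead I will prove by induction the pattern for $n = 3m+1$, namely $b_{3m+1} = (-1)^m(m+1)$, or a similar nonvanishing formula. To do this I compute the first few values:
\[
b_2=1,\quad b_3=0,\quad b_4=-2,\quad b_5=-3,\quad b_6=-1,\quad b_7=3.
\]
Then I iterate the recursion three steps at a time, using the known period-$6$ values of $a_n$, to get a relation between $b_{3m+4}$ and $b_{3m+1}$. Concretely, $b_n=b_{n-1}-b_{n-2}+a_{n-2}$ implies $b_n = -b_{n-3} + a_{n-2} + a_{n-3}$, and hence
\[
b_n = b_{n-6} + (\text{explicit sum of } a\text{'s}).
\]
This yields an arithmetic growth in absolute value along each residue class, so $b_n\neq 0$ for $n\equiv 1 \pmod 3$.

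The main obstacle is this second step: getting the three-step relation for $b_n$ right, and checking that the correction terms coming from the $a$'s never cancel the magnitude. Concretely, $|b_{3m+1}|$ must grow (or at least stay nonzero) in both parities of $m$. A fallback, if the bookkeeping is messy, is to use the closed form of $I(P_n,t)$ in terms of the roots of $\lambda^2=\lambda+t$, which are a double root at $t=-1/4$ but simple at $t=-1$. One can then differentiate explicitly at $t=-1$, where $\lambda = e^{\pm i\pi/3}$, and check that the derivative is nonzero.
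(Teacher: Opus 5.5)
Your proof is correct. The first equivalence is reached by a different route from the paper's; for the derivative the two arguments end at the same recurrence.

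The paper computes $I(P_n,-1)$ by running its elimination process on the path. This leaves copies of $P_2$ and one $P_3$, or copies of $P_2$ and one $P_4$ whose two leaves are at distance $3$. It then invokes Proposition~\ref{generalfactor} and Theorem~\ref{general}, so the path case serves as an illustration of the general decomposition theorem. You instead apply Lemma~\ref{Recusive}(i) at an endpoint to get $I(P_n,t)=I(P_{n-1},t)+tI(P_{n-2},t)$. This produces the period-$6$ sequence $a_n$ in one line and is self-contained, with no need for the elimination machinery.

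For the derivative, the paper differentiates the edge-deletion identity $I(P_{3k+1},t)=(1+t)I(P_{3k},t)-t^2I(P_{3k-2},t)$ from Lemma~\ref{Recusive}(ii). This gives $I'(P_{3k+1},-1)=I(P_{3k},-1)-I'(P_{3k-2},-1)$. Your three-step relation $b_n=-b_{n-3}+a_{n-2}+a_{n-3}$ at $n=3m+1$, using $a_{3m-2}=0$ and $a_{3m-1}=(-1)^m$, reads $b_{3m+1}=(-1)^m-b_{3m-2}$. That is exactly the paper's recurrence. Starting from $b_1=1$, the induction for $b_{3m+1}=(-1)^m(m+1)$ closes at once. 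You therefore need neither the six-step relation nor the fallback via the roots $e^{\pm i\pi/3}$, and the obstacle you anticipate does not arise.

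One correction: several of your sample values are wrong. From $I(P_2,t)=1+2t$, $I(P_3,t)=1+3t+t^2$, $I(P_5,t)=1+5t+6t^2+t^3$ and $I(P_6,t)=1+6t+10t^2+4t^3$, one gets $b_2=2$, $b_3=1$, $b_5=-4$ and $b_6=-2$. Your $b_4=-2$ and $b_7=3$ are right. The slips do no harm to the argument, because the relation along $n\equiv 1\pmod 3$ involves only $b_1,b_4,b_7,\dots$ and the $a_n$. Still, do not use the listed numbers as sanity checks.
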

\begin{proof}
 We consider the following three cases.\\
  {\bf Case 1.}  Let $n=3k$ with $k \ge 0$. Applying the elimination process recursively, we remove the vertices $x_3,\ldots,x_{3k-3}$. The resulting subgraph is a disjoint union of $k$ stars, consisting of $k-1$ copies of $P_2$ and one copy of $P_3$. By Theorem~\ref{general}, we have $\deg\ h_{R/I(P_n)}(t) = \alpha(P_n)$. Note that $I(P_2,t)=1+2t$ and $I(P_3,t)=1+3t+t^2$. By Proposition~\ref{generalfactor} (2), we obtain
\begin{equation}\label{3k}
I(P_n,-1)=I(P_2,-1)^{k-1}I(P_3,-1)=(-1)^k=(-1)^n.
\end{equation}
  {\bf Case 2.}  Let $n=3k +2$ with $k \ge 0$. Applying the elimination process recursively, we remove the vertices $x_3,\ldots,x_{3k}$. The resulting subgraph is a disjoint union of $k+1$ stars $P_2$. By Theorem~\ref{general}, we have $\deg\ h_{R/I(P_n)}(t) = \alpha(P_n)$.
  By Proposition~\ref{generalfactor} (2), we obtain
\begin{equation}\label{3k+2}
I(P_n,-1)=I(P_2,-1)^{k}=(-1)^{k+1}= (-1)^{n-1}.
\end{equation}
  {\bf Case 3.} Let $n=3k +1$ with $k \ge 0$. Applying the elimination process recursively, we remove the vertices $x_3,\ldots,x_{3k-3}$. The resulting subgraph is a disjoint union of $k$ stars, consisting of $k-1$ copies of $P_2$ and one copy of $P_4$. Since $P_4$ has two leaves of distance 3, by Proposition \ref{generalfactor} (1) we have $I(P_n,-1) = 0$. 
We claim that 
$$I'(P_n,-1)=\left(\frac{n+2}{3}\right)(-1)^{n-1}$$
for $n=3k+1$. We prove the claim by induction on $k$.
When $k=0$, we have $I(P_1,t) = 1 + t$, therefore $I'(P_1,t) = 1$. It follows that $I'(P_1,-1)=1=\left(\dfrac{1+2}{3}\right)(-1)^{1-1}$. Assume that the statement is true for all $i<k$, that is $I'(P_{3i+1},-1) = \left(i+1\right)(-1)^{3i}$, for all $i < k$. By Lemma \ref{Recusive} (ii), we have
$$I(P_{3k+1},t)=(t+1)I(P_{3k},t) - t^2I(P_{3(k-1)+1},t).$$
Differentiating both sides, we obtain
$$I'(P_{3k+1},t)=I(P_{3k},t) + (t+1)I'(P_{3k},t) - t^2I'(P_{3(k-1)+1},t) - 2tI(P_{3(k-1)+1},t). $$
We have $I(P_{{3(k-1)+1}},-1) = 0$. By substituting  $ t = -1 $ into the expression, we get 
$$I'(P_n,-1) = I'(P_{3k+1},-1) = I(P_{3k},-1) - (-1)^2I'(P_{3(k-1)+1},-1). $$
By the induction hypothesis and by (\ref{3k}) we have $I(P_{3k},-1) = (-1)^{3k}$. It follows that
$$I'(P_n,-1) = (-1)^{3k} + (-1)^3k(-1)^{3(k-1)} = (k+1)(-1)^{3k}=\left(\dfrac{n+2}{3}\right)(-1)^{n-1}.$$
By Theorem \ref{chacracteristic}, we have $\deg\ h_{R/I(P_n)}(t) = \alpha(P_n) -1$.
This completes the proof of the theorem.
\end{proof}
From Theorem \ref{pathmain}, we obtain the following corollary.
\begin{Corollary}
  Let $P_n$ be the path on $n \geq 1$ vertices. Then, $a(R/I(P_n)) \in \{-1, 0\}$.
\end{Corollary}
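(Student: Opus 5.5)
The claim follows directly from Theorem \ref{pathmain} together with the definition $a(R/I(P_n)) = \deg\ h_{R/I(P_n)}(t) - \dim R/I(P_n)$. By the lemma in Section 1 identifying the Krull dimension of $R/I(G)$ with $\alpha(G)$, we have $\dim R/I(P_n) = \alpha(P_n)$. So $a(R/I(P_n)) = \deg\ h_{R/I(P_n)}(t) - \alpha(P_n)$, and it suffices to show that this difference is always $0$ or $-1$.

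We split according to the residue of $n$ modulo $3$; the three residues exhaust all $n \geq 1$.
\begin{enumerate}
\item If $n \equiv 0$ or $2 \pmod 3$, the first part of Theorem \ref{pathmain} gives $\deg\ h_{R/I(P_n)}(t) = \alpha(P_n)$, hence $a(R/I(P_n)) = 0$.
\item If $n \equiv 1 \pmod 3$, the second part gives $\deg\ h_{R/I(P_n)}(t) = \alpha(P_n) - 1$, hence $a(R/I(P_n)) = -1$.
\end{enumerate}
Therefore $a(R/I(P_n)) \in \{-1,0\}$.

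The only point needing care is the case $n \equiv 1 \pmod 3$. There one must be sure that the degree is exactly $\alpha(P_n)-1$, and not merely at most that. This is exactly what the proof of Theorem \ref{pathmain} supplies. It shows $I(P_n,-1) = 0$, and it computes $I'(P_n,-1) = \frac{n+2}{3}(-1)^{n-1} \neq 0$. Hence the index $k$ in Theorem \ref{chacracteristic} equals $1$, which gives degree exactly $\alpha(P_n)-1$. With that in hand, nothing further is required. The Remark $a(R/I(G)) \le 0$ is consistent with the result but is not needed.
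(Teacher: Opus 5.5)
Your proof is correct and matches the paper's argument. The paper states the corollary as an immediate consequence of Theorem~\ref{pathmain}, using $a(R/I(P_n)) = \deg h_{R/I(P_n)}(t) - \alpha(P_n)$ exactly as you do; your extra exactness check for $n \equiv 1 \pmod 3$ is already part of that theorem's statement.
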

We next determine the degree of the $h$-polynomial for cycles.

In the following lemma, we compute the value of the independence polynomial of a cycle graph at $-1$.
\begin{Lemma}\label{cycle}
  Let $C_n$ be the cycle graph on $n \geq 3$ vertices and $I(C_n,t)$ be the independence polynomial of $C_n$. Then
$$I(C_n,-1) = \left\{
    \begin{array}{ll}
     2.(-1)^{n}  & \hbox{ if } n \equiv 0 \pmod 3; \\
      (-1 )^{n-1} & \hbox{ if } n \equiv 1,2 \pmod 3.
    \end{array}
  \right.
$$
\end{Lemma}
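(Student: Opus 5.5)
We reduce $C_n$ to paths with Lemma~\ref{Recusive} (i) and then read off the values at $-1$ from the proof of Theorem~\ref{pathmain}.

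Fix a vertex $x_1$ of $C_n$. Deleting $x_1$ leaves the path $P_{n-1}$. Deleting $N[x_1]$ leaves the path $P_{n-3}$, with the convention $P_0=\emptyset$ and $I(P_0,t)=1$ (this matters when $n=3$). Lemma~\ref{Recusive} (i) therefore gives
\[
I(C_n,t)=I(P_{n-1},t)+t\,I(P_{n-3},t),
\qquad\text{so}\qquad
I(C_n,-1)=I(P_{n-1},-1)-I(P_{n-3},-1).
\]

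From the proof of Theorem~\ref{pathmain} we have:
\begin{itemize}
\item $I(P_m,-1)=(-1)^m$ if $m\equiv 0\pmod 3$, by \eqref{3k}; this also holds for $m=0$.
\item $I(P_m,-1)=(-1)^{m-1}$ if $m\equiv 2\pmod 3$, by \eqref{3k+2}.
\item $I(P_m,-1)=0$ if $m\equiv 1\pmod 3$, by Case 3.
\end{itemize}
Since $n-1\equiv n-3+2\pmod 3$, the two indices $n-1$ and $n-3$ fall into different residue classes. We split according to $n \bmod 3$.

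If $n\equiv 0\pmod 3$, then $n-1\equiv 2$ and $n-3\equiv 0$. So
\[
I(C_n,-1)=(-1)^{n-2}-(-1)^{n-3}=2(-1)^n.
\]

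If $n\equiv 1\pmod 3$, then $n-1\equiv 0$ and $n-3\equiv 1$. So
\[
I(C_n,-1)=(-1)^{n-1}-0=(-1)^{n-1}.
\]

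If $n\equiv 2\pmod 3$, then $n-1\equiv 1$ and $n-3\equiv 2$. So
\[
I(C_n,-1)=0-(-1)^{n-4}=(-1)^{n-1}.
\]

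Each case matches the stated formula.

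The only delicate point is the small cases. For $n=3$ the term $P_0$ appears, and $I(P_0,-1)=1$ agrees with \eqref{3k} at $k=0$. For $n=4$ the term $P_1$ appears, and $I(P_1,-1)=0$. For $n=5$ the term $P_2$ appears, and $I(P_2,-1)=-1$. All three are consistent with the path formulas, so no case needs separate treatment beyond noting the $P_0$ convention.
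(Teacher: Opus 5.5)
Your proof is correct. It follows the same overall strategy as the paper: reduce $C_n$ to paths via Lemma~\ref{Recusive}, then read off the mod-$3$ values of $I(P_m,-1)$ from the proof of Theorem~\ref{pathmain}.

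The one difference is the recursion used. You apply part (i), deleting a vertex. The paper applies part (ii), deleting the edge $x_1x_n$, which gives $I(C_n,t)=I(P_n,t)-t^2I(P_{n-4},t)$ and hence $I(C_n,-1)=I(P_n,-1)-I(P_{n-4},-1)$. The arithmetic is equally short either way.

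Your version is slightly cleaner at the boundary. For $n=3$, the paper's residual graph $C_3-(N(x_1)\cup N(x_3))$ is empty, so its ``$P_{n-4}$'' must be read as $P_{-1}:=\emptyset$. The path formula it applies there gives $(-1)^{n-3}=1$, which happens to be the right value. Your residual graphs are always genuine paths $P_{n-1}$ and $P_{n-3}$ with $n-3\ge 0$, and you check the $P_0$ convention explicitly.

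The paper's choice has only a cosmetic advantage: the path $P_n$ has the same residue mod $3$ as the cycle, so its three cases line up directly with those of Theorem~\ref{pathmain}.
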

\begin{proof}
By Lemma \ref{Recusive}, we have
$$I(C_n,t) = I(P_n,t) - t^2I(P_{n-4},t).$$
{\bf Case 1.} If $n \equiv 0 \pmod 3$, then $n - 4 \equiv 2 \pmod 3$. According to (\ref{3k}), we have $I(P_n,-1) = (-1)^n$ and $I(P_{n-4},-1) = (-1)^{n-3}$. It follows that
$$I(C_n,-1) = (-1)^n - (-1)^2(-1)^{n-3} = 2(-1)^n.$$
{\bf Case 2.} If $n \equiv 1 \pmod 3$, then $n - 4 \equiv 0 \pmod 3$. By Case 2 of the proof of Theorem \ref{pathmain}, we have $I(P_n,-1) = 0$ and $I(P_{n-4},-1) = (-1)^{n-4}$.
It follows that
$$I(C_n,-1) =  - (-1)^2(-1)^{n-4} = (-1)^{n-1}.$$
{\bf Case 3.} If $n \equiv 2 \pmod 3$, then $n - 4 \equiv 1 \pmod 3$. By (\ref{3k+2}), we get $I(P_n,-1) = (-1)^{n-1}$ and $I(P_{n-4},-1) = 0$.
It follows that
$$I(C_n,-1) = (-1)^{n-1}.$$
This completes the proof.
\end{proof}
Combining Theorem~\ref{chacracteristic} and Lemma~\ref{cycle}, we get the following theorem.
\begin{Theorem}\label{cyclemain}
  Let $C_n$ be the cycle on $n \geq 3$ vertices. Then degree of the $h$-polynomial of $R/I(C_n)$ is
$$\deg\ h_{R/I(C_n)} =  \alpha(C_n).$$
\end{Theorem}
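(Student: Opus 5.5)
The plan is to obtain the statement directly from Theorem~\ref{chacracteristic} together with Lemma~\ref{cycle}. By Theorem~\ref{chacracteristic}, $\deg\, h_{R/I(C_n)}(t)=\alpha(C_n)$ holds exactly when $I(C_n,-1)\neq 0$, so it suffices to show that $I(C_n,-1)$ never vanishes for $n\ge 3$.

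Lemma~\ref{cycle} gives the value of $I(C_n,-1)$ in each residue class of $n$ modulo $3$. It is $2(-1)^n$ when $n\equiv 0 \pmod 3$ and $(-1)^{n-1}$ when $n\equiv 1,2 \pmod 3$. In every case this value lies in $\{\pm 1,\pm 2\}$, so it is nonzero. Applying Theorem~\ref{chacracteristic} then yields the claim.

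The only place needing care is the small cases where the path appearing in the decomposition $I(C_n,t)=I(P_n,t)-t^2I(P_{n-4},t)$ used for Lemma~\ref{cycle} degenerates. These are $n=3$, where $P_{-1}$ appears, and $n=4$, where $P_0$ appears. I would check them directly rather than rely on conventions.

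For $n=3$, the cycle $C_3$ is a triangle, so $I(C_3,t)=1+3t$ and $I(C_3,-1)=-2=2(-1)^3$.

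For $n=4$, the independent sets of $C_4$ are the empty set, the four singletons and the two diagonal pairs, so $I(C_4,t)=1+4t+2t^2$ and $I(C_4,-1)=-1=(-1)^{3}$.

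For $n=5$, the path $P_1$ appears and the earlier formulas for paths apply without change.

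With these small cases confirmed to agree with Lemma~\ref{cycle}, nonvanishing holds for all $n\ge 3$, and the theorem follows.
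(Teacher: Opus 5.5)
Your proposal is correct and is the paper's argument: Lemma~\ref{cycle} gives $I(C_n,-1)\neq 0$, and Theorem~\ref{chacracteristic} then yields the claim. Your direct check of $n=3,4$ is a sensible addition. The paper's proof of Lemma~\ref{cycle} silently treats $P_{-1}$ and $P_0$ as the empty graph with independence polynomial $1$, and your values $I(C_3,-1)=-2$ and $I(C_4,-1)=-1$ confirm that this convention gives the right answer.
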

\begin{proof}
By Lemma \ref{cycle}, we have $I(C_n,-1) \neq 0$ for all $ n\geq 3$. According to Theorem \ref{chacracteristic}, we get $\deg\ h_{R/I(C_n)} =  \alpha(C_n).$
\end{proof}
From Theorem \ref{cyclemain}, we obtain the following corollary.
\begin{Corollary}
  Let $C_n$ be the path on $n \geq 3$ vertices. Then, $a(R/I(C_n)) = 0$.
\end{Corollary}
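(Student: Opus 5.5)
The statement is the Corollary asserting $a(R/I(C_n))=0$ for every $n\ge 3$. By the definition recalled in the preliminaries, $a(R/I)=\deg h_{R/I}(t)-\dim R/I$. So the plan has two short steps.

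First, compute the dimension. By the lemma on dimensions of edge ideals,
\[
\dim R/I(C_n)=\alpha(C_n).
\]

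Second, compute the degree. Theorem~\ref{cyclemain} gives
\[
\deg h_{R/I(C_n)}(t)=\alpha(C_n).
\]
That theorem rests on Lemma~\ref{cycle}, which shows $I(C_n,-1)\in\{\pm1,\pm2\}$ and hence $I(C_n,-1)\neq 0$ in all three residue classes of $n$ modulo $3$. Theorem~\ref{chacracteristic} then upgrades this nonvanishing to equality of the degree with $\alpha(C_n)$.

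Subtracting the two equalities gives $a(R/I(C_n))=\alpha(C_n)-\alpha(C_n)=0$.

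There is no real obstacle here; all the content is in Theorem~\ref{cyclemain} and Lemma~\ref{cycle}, which we take as given. The only thing to keep straight is that the statement concerns cycles, not paths as its wording says. For paths the $a$-invariant can be $-1$ when $n\equiv 1 \pmod 3$, so the argument must use the cycle results and not Theorem~\ref{pathmain}.
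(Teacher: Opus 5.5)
Your proposal is correct and is exactly the paper's route: the corollary is read off from Theorem~\ref{cyclemain} (which rests on Lemma~\ref{cycle} and Theorem~\ref{chacracteristic}), combined with $\dim R/I(C_n)=\alpha(C_n)$ and the definition $a(R/I)=\deg h_{R/I}(t)-\dim R/I$. You are also right that ``path'' in the statement is a typo for ``cycle''.
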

\subsection{Degree of $h$-polynomials complete multipartite graphs and bipartite graphs}
In this section, we study the degree of the $h$-polynomial of complete multipartite graphs and bipartite graphs. In particular, we establish a necessary and sufficient condition for the degree of the $h$-polynomial of a bipartite graph to equal its independence number by applying Theorem \ref{general}.

\begin{Lemma}\label{Zykov}
Let $G=\sum_{i=1}^kG_i$ be the Zykov sum of the disjoint graphs $G_1,\ldots,G_k$. Then
$\deg\ h_{R/I(G)}(t) = \alpha(G)$ if and only if $\sum_{i=1}^{k}I(G_i,-1) \neq k-1$. 
\end{Lemma}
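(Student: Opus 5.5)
The plan is to reduce the statement to Theorem \ref{chacracteristic}, which says that $\deg h_{R/I(G)}(t)=\alpha(G)$ exactly when $I(G,-1)\neq 0$. It therefore suffices to compute $I(G,-1)$ for $G=\sum_{i=1}^k G_i$ in terms of the values $I(G_i,-1)$.

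First I will extend Lemma \ref{Joint} (ii) to $k$ summands by induction on $k$, aiming for
$$I\Big(\sum_{i=1}^k G_i,t\Big)=\sum_{i=1}^k I(G_i,t)-(k-1).$$
The base case $k=1$ is trivial. For the inductive step, the Zykov sum is associative: $\sum_{i=1}^{k}G_i=\big(\sum_{i=1}^{k-1}G_i\big)+G_k$, because every vertex of $G_k$ is adjacent to every vertex of the earlier summands. Applying Lemma \ref{Joint} (ii) to these two pieces and then the induction hypothesis gives
$$\sum_{i=1}^{k-1}I(G_i,t)-(k-2)+I(G_k,t)-1,$$
which is the claimed formula. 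Alternatively, one can argue directly: an independent set of the Zykov sum lies entirely inside a single $G_i$, since vertices from different summands are adjacent. Hence $s_j(G)=\sum_i s_j(G_i)$ for $j\ge 1$, while $s_0$ contributes $1$ only once rather than $k$ times.

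Next I will set $t=-1$, which gives $I(G,-1)=\sum_{i=1}^k I(G_i,-1)-(k-1)$. So $I(G,-1)\neq 0$ if and only if $\sum_{i=1}^k I(G_i,-1)\neq k-1$. Theorem \ref{chacracteristic} then yields the equivalence in the statement.

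I expect no real obstacle here. The only point needing care is to justify associativity of the Zykov sum, or to use the direct counting argument instead, so that the two-summand formula of Lemma \ref{Joint} (ii) can be iterated.
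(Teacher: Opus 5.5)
Your proposal is correct and follows essentially the same route as the paper: you extend Lemma \ref{Joint} (ii) by induction to $I(G,t)=\sum_{i=1}^k I(G_i,t)-(k-1)$, evaluate at $t=-1$, and apply Theorem \ref{chacracteristic}. Your explicit treatment of associativity, and the alternative counting argument that every nonempty independent set of the Zykov sum lies in a single $G_i$, add detail that the paper leaves implicit in the words ``and induction.''
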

\begin{proof}
  By Lemma \ref{Joint} (ii) and induction, we have $$I(G,t)=\sum_{i=1}^{k}I(G_i,t) - (k-1).$$ Therefore,
  $I(G,-1) \neq 0$ if and only if $\sum_{i=1}^{k}I(G_i,-1) \neq k-1$. According to Theorem \ref{chacracteristic}, the conclusion follows.
\end{proof}
We observe that a complete $q$-partite graph is the Zykov sum of $q$ complements of complete graphs. 
From Lemma~\ref{Zykov}, we obtain the following theorem.

\begin{Theorem}
Let $G=K_{m_1,m_2,\ldots,m_q}$ be the completed $q$-partite graph with $m_1 \geq m_2 \geq \cdots \geq m_q \geq 1$. Then
$$\deg\ h_{R/I(G)}(t) = m_1.$$
In particular, if $K_{a,b}$ with $a \geq b \geq 1$ is a complete bipartite graph, then 
$$\deg\ h_{R/I(K_{a,b})}=a.$$
\end{Theorem}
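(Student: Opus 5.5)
We will compute the independence polynomial of $G=K_{m_1,\ldots,m_q}$ explicitly and then apply Theorem~\ref{chacracteristic}. The paper observes that $G$ is the Zykov sum of the edgeless graphs $\overline{K_{m_1}},\ldots,\overline{K_{m_q}}$. By Lemma~\ref{Joint}(i), each such edgeless graph has independence polynomial $(1+t)^{m_i}$. Hence Lemma~\ref{Joint}(ii), applied inductively as in the proof of Lemma~\ref{Zykov}, gives
\[
I(G,t)=\sum_{i=1}^{q}(1+t)^{m_i}-(q-1).
\]
The independent sets of $G$ are exactly the subsets of a single part, so $\alpha(G)=m_1$. This is consistent with the formula, since the top-degree coefficient is $s_{m_1}(G)$, the number of parts of size $m_1$.

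The next step is to evaluate the derivatives at $-1$. Every $m_i\ge 1$, so $I(G,-1)=-(q-1)=1-q$. When $q\ge 2$ this is nonzero, and Theorem~\ref{chacracteristic} immediately gives $\deg h_{R/I(G)}(t)=\alpha(G)=m_1$. In particular, for $K_{a,b}$ we get $I(K_{a,b},-1)=-1\neq 0$, so the degree is $a$.

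The only degenerate case is $q=1$, where $G=\overline{K_{m_1}}$ has no edges and $I(G,-1)=0$. Here the $i$-th derivative of $(1+t)^{m_1}$ at $-1$ vanishes for $i<m_1$ and equals $m_1!$ for $i=m_1$. So $k=m_1$ in Theorem~\ref{chacracteristic}, and the degree is $0$ rather than $m_1$. Equivalently, $R/I(G)=R$ has $h$-polynomial $1$. The statement is implicitly about genuine multipartite graphs, so we will assume $q\ge 2$ and note this exception.

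There is no real obstacle here. The one point needing care is making the induction in Lemma~\ref{Joint}(ii) explicit for $q$ summands, in particular confirming that the constant term accumulates to $-(q-1)$.
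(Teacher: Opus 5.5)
Your proposal is correct and follows the paper's argument: write $G$ as the Zykov sum of the edgeless graphs $\overline{K}_{m_i}$, obtain $I(G,-1)=1-q$, and conclude from Theorem~\ref{chacracteristic} together with $\alpha(G)=m_1$. The paper phrases the middle step as $\sum_i I(\overline{K}_{m_i},-1)=0\neq q-1$ via Lemma~\ref{Zykov}, and that step silently needs $q\ge 2$; your $q=1$ caveat is right, since an edgeless graph has $h$-polynomial $1$, so the statement should be read with $q\ge 2$.
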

\begin{proof}
  We note that $K_{m_1,m_2,\ldots,m_q} = \overline{K}_{m_1} +\overline{ K}_{m_2} + \cdots +\overline{ K}_{m_q}$. But for every $i=1, \ldots, q$, we have $I(\overline{K}_{m_i},t)=(1+t)^{m_i}$. Therefore, $$I(\overline{K}_{m_1},-1) + \cdots + I(\overline{K}_{m_q},-1) = 0 \neq  q - 1.$$
  By Lemma \ref{Zykov},
 $$\deg\ h_{R/I(K_{m_1,m_2,\ldots,m_n})}(t) = \alpha(K_{m_1,m_2,\ldots,m_n}) = m_1.$$
\end{proof}
\begin{Corollary}
  Let $K_{m_1, \ldots, m_q}$ be a completed $q$-partite graph. Then 
  $a(R/I(K_{m_1, \ldots, m_q})) = 0.$
\end{Corollary}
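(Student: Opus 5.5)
The plan is to read the corollary directly off the preceding theorem, using the definition of the $a$-invariant together with the lemma identifying the Krull dimension of $R/I(G)$ with $\alpha(G)$. By definition, $a(R/I(G))=\deg\ h_{R/I(G)}(t)-\dim R/I(G)$. So it suffices to show that for $G=K_{m_1,\ldots,m_q}$ both quantities equal $m_1$, where $m_1\ge m_2\ge\cdots\ge m_q$ as in the theorem (after reordering the parts if necessary).

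First, I would confirm that $\alpha(G)=m_1$. Every independent set of a complete $q$-partite graph lies inside a single part, because any two vertices in different parts are adjacent. Each part is an independent set, so the largest one has size $m_1$. By the lemma preceding Remark~1.1, this gives $\dim R/I(G)=\alpha(G)=m_1$.

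Second, I would invoke the theorem just proved, which gives $\deg\ h_{R/I(G)}(t)=m_1$. Equivalently, by Lemma~\ref{Zykov}: we have $I(\overline{K}_{m_i},-1)=(1-1)^{m_i}=0$ for every $i$, so $\sum_i I(\overline{K}_{m_i},-1)=0\ne q-1$ whenever $q\ge 2$, and hence $\deg\ h_{R/I(G)}(t)=\alpha(G)$.

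Subtracting the two values gives $a(R/I(G))=m_1-m_1=0$.

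The only point needing care is the degenerate case $q=1$. Here $G=\overline{K}_{m_1}$ has no edges and $I(G,-1)=0$, so Lemma~\ref{Zykov} does not apply. In fact $R/I(G)=R$ has $h$-polynomial $1$, so the statement as written needs $q\ge 2$, which is implicit in calling $G$ a multipartite graph. I would state this assumption explicitly. Apart from that, there is no real obstacle: the argument is a direct substitution into the definition of $a(R/I)$.
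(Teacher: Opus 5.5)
Your proposal is correct and follows the same route as the paper, which reads the corollary off the preceding theorem ($\deg\ h_{R/I(G)}(t)=m_1=\alpha(G)$) together with $\dim R/I(G)=\alpha(G)$. Your caveat about $q=1$ is also correct and genuinely needed. In that case $G$ is edgeless, so $h_{R/I(G)}(t)=1$ and $a(R/I(G))=-m_1$, and the paper's proof of the theorem tacitly uses $0\neq q-1$, i.e.\ $q\ge 2$.
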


Applying Theorem \ref{general}, if $G$ is a connected bipartite graph, then the following theorem holds.
\begin{Theorem}\label{bipartite}
Let $G$ be a connected bipartite graph.
Assume that, using the reduction process, $G$ decomposes into a disjoint union of star graphs
$S_1,\ldots,S_r$ and bipartite graphs $G_1,\ldots,G_s$, where every vertex of each $G_j$ has degree at least $2$.
Then
\[
\deg\ h_{R/I(G)}(t)=\alpha(G)
\]
if and only if $I(G_j,-1)\neq 0$ for all $j=1,\ldots,s$.
\end{Theorem}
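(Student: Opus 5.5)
The statement is essentially Theorem~\ref{general} specialized to bipartite graphs. The plan is to check that the hypotheses of Theorem~\ref{general} are met and that the pieces produced by the elimination process inherit bipartiteness. Since $G$ is a finite simple connected graph, the elimination process of Proposition~\ref{generalfactor} applies to it verbatim. By hypothesis, we are in alternative (2) of that proposition: $G$ decomposes into stars $S_1,\ldots,S_r$ and graphs $G_1,\ldots,G_s$ with all degrees at least $2$.

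The only genuinely new point is that each $G_j$ is bipartite. Every graph $G^{(k)}$ appearing in the process is obtained from $G$ by deleting vertices and their incident edges. Hence every such graph, and every connected component of it, is an induced subgraph of $G$. Induced subgraphs of a bipartite graph are bipartite, since restricting a proper $2$-colouring of $V$ to a subset gives a proper $2$-colouring. So each $G_j$ is bipartite, as the statement asserts. The stars $S_i$ are bipartite as well, although this is not needed.

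We then invoke Proposition~\ref{generalfactor}(2) to write
\[
I(G,-1)=\prod_{i=1}^{r} I(S_i,-1)\prod_{j=1}^{s} I(G_j,-1).
\]
By Lemma~\ref{star}, each factor satisfies $I(S_i,-1)=-1\neq 0$. Therefore $I(G,-1)\neq 0$ if and only if $I(G_j,-1)\neq 0$ for every $j$. Theorem~\ref{chacracteristic} then converts this into $\deg h_{R/I(G)}(t)=\alpha(G)$. Equivalently, one may simply cite Theorem~\ref{general}.

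There is no real obstacle. The one point to handle carefully is that ``reduction process'' in the statement means the elimination process of Proposition~\ref{generalfactor}. The other is that the components $G_j$ are meant to be connected, with disconnected pieces split into components as in Case~2 of that proof. Since $I(\cdot,-1)$ is multiplicative over components by Lemma~\ref{Joint}(i), the nonvanishing condition is unaffected either way.
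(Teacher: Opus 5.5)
Your proposal is correct and follows the paper's route. The paper gives no separate argument for Theorem~\ref{bipartite} beyond specializing Theorem~\ref{general}, whose proof is exactly your combination of Proposition~\ref{generalfactor}(2), Lemma~\ref{star}, and Theorem~\ref{chacracteristic}; your remark that each $G_j$ is bipartite because it is an induced subgraph of $G$ is a correct addition the paper leaves implicit.
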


\begin{Example}
  Let $G$ be the connected bipartite graph as follows.
\begin{center}
  \begin{tikzpicture}[scale=0.9, every node/.style={draw=black, fill=white, circle, inner sep=2pt},
 dot/.style={circle, draw, fill=white, inner sep=2pt}
]
    \node[label=below:$u_1$] (u1) at (0, 0) {};
    \node[label=above:$v_1$] (v1) at (1, 2.5) {};
    \node[label=below:$u_2$] (u2) at (1, 0) {};
    \node[label=below:$u_4$] (u4) at (3, 0) {};
    \node[label=above:$v_2$] (v2) at (2, 2.5) {};
    \node[label=below:$u_3$] (u3) at (2, 0) {}; 
    \node[label=below:$u_5$] (u5) at (4, 0) {}; 
    \node[label=above:$v_3$] (v3) at (5, 2.5) {};
    \node[label=above:$v_4$] (v4) at (6, 2.5) {};
    \node[label=above:$v_5$] (v5) at (7, 2.5) {};
    \node[label=above:$v_6$] (v6) at (8, 2.5) {};
    \node[label=above:$v_7$] (v7) at (9, 2.5) {};
    \node[label=above:$v_8$] (v8) at (10, 2.5) {};
    \node[label=above:$v_9$] (v9) at (11, 2.5) {};
    \node[label=below:$u_6$] (u6) at (6, 0) {};
    \node[label=below:$u_7$] (u7) at (7, 0) {};
    \node[label=below:$u_8$] (u8) at (8, 0) {};
    \node[label=below:$u_9$] (u9) at (9, 0) {};
    \node[label=below:$u_{10}$] (u10) at (10, 0) {};
    \draw (u1) -- (v1) -- (u2); 
    \draw (u3) -- (v2) -- (u4);
    \draw (v2) -- (u2);  \draw (v2) -- (u5); \draw (u5) -- (v3); 
    \draw (u5) -- (v4);  \draw (u5) -- (v6); \draw (u5) -- (v8); 
    \draw (v4) -- (u6);  \draw (v4) -- (u7); \draw (v5) -- (u6); 
    \draw (v5) -- (u7);  \draw (v6) -- (u8); \draw (v7) -- (u8); 
    \draw (v7) -- (u9);  \draw (v7) -- (u10); \draw (v8) -- (u9); 
    \draw (v8) -- (u10); \draw (v9) -- (u10); 
\end{tikzpicture}\\
\end{center}
Since $G$ contains two leaf vertices $u_4$ and $v_3$ of distance $3$, it follows from Lemma \ref{distance3} that
\[
\deg\ h_{R/I(G)}(t) < \alpha(G).
\]
Using \texttt{Macaulay2} \cite{GS}, we have $\deg\ h_{R/I(G)}(t) = 9$ and $\alpha(G) = 11$.
\end{Example}

\begin{Example}
  Let $G$ be the connected bipartite graph as follows.
\begin{center}
  \begin{tikzpicture}[scale=0.9, every node/.style={draw=black, fill=white, circle, inner sep=2pt}]
    \node[label=above:$u_4$]  (u4) at (1, 4) {};
    \node[label=above:$u_5$]  (u5) at (3, 4) {}; 
    \node[label=above:$u_6$]  (u6) at (5, 4) {};
    \node[label=below:$u_1$]  (u1) at (1, 0) {};
    \node[label=below:$u_2$]  (u2) at (5, 0) {};
    \node[label=below:$u_3$]  (u3) at (9, 0) {}; 
    \node[label=left:$v_1$]  (v1) at (1, 2.5) {};
    \node[label=left:$v_2$]  (v2) at (3, 2.5) {};
    \node[label=left:$v_3$]  (v3) at (5, 2.5) {};
    \node[label=left:$v_4$]  (v4) at (7, 2.5) {};
    \node[label=left:$v_5$]  (v5) at (9, 2.5) {};
    \draw (u1) -- (v1) -- (u4);
    \draw (u1) -- (v2) -- (u5);
    \draw (u1) -- (v3) -- (u6);
    \draw (u2) -- (v3);
    \draw (u2) -- (v4);
    \draw (u3) -- (v4);
    \draw (u3) -- (v5);
\end{tikzpicture}\\
\end{center}
 We have $U_2(G) = \{u_1, u_2, v_4\}$. Using the reduction process to $G$, we remove $u_1$, $u_2$ and $v_4$, we get a subgraph that contains stars as follows.

\begin{center}
  \begin{tikzpicture}[scale=0.9, every node/.style={draw=black, fill=white, circle, inner sep=2pt},
ring/.style={circle, draw, fill=black, inner sep=1pt}]
    \node[label=above:$u_4$]  (u4) at (1, 4) {};
    \node[label=above:$u_5$]  (u5) at (3, 4) {}; 
    \node[label=above:$u_6$]  (u6) at (5, 4) {};
    \node[ring]  (u1) at (1, 0) {};
    \node[ring]  (u2) at (5, 0) {};
    \node[label=below:$u_3$]  (u3) at (9, 0) {}; 
    \node[label=left:$v_1$]  (v1) at (1, 2.5) {};
    \node[label=left:$v_2$]  (v2) at (3, 2.5) {};
    \node[label=left:$v_3$]  (v3) at (5, 2.5) {};
    \node[ring]  (v4) at (7, 2.5) {};
    \node[label=left:$v_5$]  (v5) at (9, 2.5) {};
    \draw (v1) -- (u4);
    \draw[dotted] (u1) -- (v1) (u1) -- (v2) (u1) -- (v3) (u2) -- (v3) (u2) -- (v4);
    \draw (v2) -- (u5);
    \draw (v3) -- (u6);
    \draw[dotted] (u3) -- (v4);
    \draw (u3) -- (v5);
\end{tikzpicture}
\end{center}
 By Theorem \ref{bipartite}, we get $$\deg \ h_{R/I(G)}(t) = \alpha(G) = 6.$$
\end{Example}
\begin{Example}
 Let $G$ be the bipartite graph as follows.
\begin{center}
  \begin{tikzpicture}[scale=0.9, every node/.style={draw=black, fill=white, circle, inner sep=2pt}]
    \node[label=below:$u_4$] (u4) at (7, 0) {};
    \node[label=above:$u_5$] (u5) at (1, 5) {}; 
    \node[label=below:$u_1$] (u1) at (1, 0) {};
    \node[label=below:$u_2$] (u2) at (3, 0) {};
    \node[label=below:$u_3$] (u3) at (5, 0) {}; 
    \node[label=left:$v_1$] (v1) at (1, 2.5) {};
    \node[label=left:$v_2$] (v2) at (3, 2.5) {};
    \node[label=left:$v_3$] (v3) at (5, 2.5) {};
    \node[label=left:$v_4$] (v4) at (7, 2.5) {};
    \draw (u1) -- (v1) -- (u5);
    \draw (u1) -- (v2);
    \draw (u2) -- (v2);
    \draw (u3) -- (v2);
    \draw (u4) -- (v2);
    \draw (u2) -- (v4);
    \draw (u3) -- (v3);
    \draw (u3) -- (v4);
    \draw (u4) -- (v4);
\end{tikzpicture}
\end{center}
We have $U_2(G) = \{u_1, v_2, v_4\}$. 
Applying the reduction process to $G$, we obtain the subgraph
which consists of stars and isolated vertices as follows. 
\begin{center}
  \begin{tikzpicture}[scale=0.9, every node/.style={draw=black, fill=white, circle, inner sep=2pt}, ring/.style={circle, draw, fill=black, inner sep=1pt}]
  \node[label=below:$u_4$] (u4) at (7, 0) {};
    \node[label=above:$u_5$] (u5) at (1, 5) {}; 
    \node[ring] (u1) at (1, 0) {};
    \node[label=below:$u_2$] (u2) at (3, 0) {};
    \node[label=below:$u_3$] (u3) at (5, 0) {}; 
    \node[label=left:$v_1$] (v1) at (1, 2.5) {};
    \node[ring] (v2) at (3, 2.5) {};
    \node[label=left:$v_3$] (v3) at (5, 2.5) {};
    \node[ring] (v4) at (7, 2.5) {};
    \draw[dotted] (u1) -- (v1);
    \draw (v1)-- (u5);
    \draw[dotted] (u1) -- (v2);
    \draw[dotted]  (u2) -- (v2);
    \draw[dotted]  (u3) -- (v2);
    \draw[dotted]  (u4) -- (v2);
    \draw[dotted]  (u2) -- (v4);
    \draw (u3) -- (v3);
    \draw[dotted]  (u3) -- (v4);
    \draw[dotted]  (u4) -- (v4);
    
\end{tikzpicture}
\end{center}
Therefore, by Theorem \ref{bipartite},
$$
\deg\ h_{R/I(G)}(t) < \alpha(G).
$$
Using \texttt{Macaulay2} \cite{GS}, we have $\deg\ h_{R/I(G)}(t) = 4$ and $\alpha(G) = 5$.
\end{Example}

As an application of Theorem~\ref{general}, we consider rooted trees.
For completeness, we recall the following definition.
\begin{Definition}
  A \emph{rooted tree} $T$ is a tree in which one vertex is designated as the root, and every edge is directed away from the root. If $u$ and $v$ are vertices in the rooted tree $T$, then $v$ is called a \emph{child} of $u$ if there is a directed edge from $u$ to $v$. A vertex with no children is called a \emph{leaf}, while vertices that have children are called \emph{internal vertices}. The \emph{level} of a vertex $v$ in a rooted tree $T$ is the length of the unique path from the root to $v$.
A rooted tree $T$ is said to be a \emph{$m$-ary tree} if every internal vertex has no more than $m$ children.
\end{Definition}

\begin{center}
  \begin{tikzpicture}[scale=0.9, every node/.style={draw=black, fill=white, circle, inner sep=2pt}]
    
    \node (v1) at (0, 5) {};
    \node (v2) at (-1, 4) {};
    \node (v3) at (0, 4) {};
    \node (v4) at (1, 4) {};
    \node (v5) at (-2, 3) {}; 
    \node (v6) at (-1, 3) {};
    \node (v7) at (0, 3) {}; 
    \node (v8) at (1, 3) {};
    \node (v9) at (2, 3) {};
    \draw (v1) -- (v2);
    \draw (v1) -- (v3);
    \draw (v1) -- (v4);
    \draw (v2) -- (v5);
    \draw (v2) -- (v6);
    \draw (v3) -- (v7);
    \draw (v4) -- (v8);
    \draw (v4) -- (v9);
\end{tikzpicture}\\
\textbf{Figure 1.} The $3$-ary tree with all leaves of level $2$.
\end{center}
\begin{Corollary}\label{rootedtree}
  Let $T$ be an $m$-ary tree with all leaves of level $n$, where $m \geq 1$ and $n \geq 0$. Then $\deg\ h_{R/I(T)}(t) = \alpha(T)$ if and only if $n \equiv 1,2 \pmod 3$.  
\end{Corollary}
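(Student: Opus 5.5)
The plan is to compute $I(T,-1)$ directly by a bottom-up recursion on the rooted structure of $T$, and then conclude with Theorem~\ref{chacracteristic}. I would not run the elimination process of Proposition~\ref{generalfactor} here. When the root has a single child it is a leaf of the underlying graph; for instance $m=1$ gives the path $P_{n+1}$. That makes the elimination bookkeeping awkward, whereas a recursion on subtrees handles every case uniformly.

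\textbf{Setup.} For a vertex $v$ of $T$ at level $j$, let $T_v$ be the subtree rooted at $v$. It is again a rooted tree in which every leaf lies at depth $d:=n-j$ below $v$, and every internal vertex has between $1$ and $m$ children. Set
\[
a(v)=I(T_v,-1), \qquad b(v)=I(T_v-v,-1).
\]
Since $T_v-v$ is the disjoint union of the subtrees $T_c$ over the children $c$ of $v$, Lemma~\ref{Joint}~(i) gives $b(v)=\prod_c a(c)$. Lemma~\ref{Recusive}~(i) applied at $v$ gives $T_v-N[v]=\bigcup_c (T_c-c)$, and evaluating at $t=-1$ yields
\[
a(v)=b(v)-\prod_c b(c).
\]
For a leaf $v$ ($d=0$) we have $a(v)=0$ and $b(v)=1$, the latter being the empty graph.

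\textbf{Key claim.} By induction on $d$, every vertex $v$ of depth $d$ satisfies:
\begin{itemize}
\item if $d\equiv 0 \pmod 3$, then $a(v)=0$ and $b(v)=\pm1$;
\item if $d\equiv 1 \pmod 3$, then $b(v)=0$ and $a(v)=\pm1$;
\item if $d\equiv 2 \pmod 3$, then $a(v)=b(v)=\pm1$.
\end{itemize}
The inductive steps are as follows, using that every child of a depth-$d$ vertex has depth $d-1$ and that $v$ has at least one child.
\begin{itemize}
\item From $d\equiv 0$ to $d+1$: $b(v)=\prod 0=0$, so $a(v)=-\prod(\pm1)=\pm1$.
\item From $d\equiv 1$ to $d+1$: $b(v)=\prod(\pm1)=\pm1$ and $\prod_c b(c)=0$, so $a(v)=b(v)$.
\item From $d\equiv 2$ to $d+1$: $b(v)=\prod_c a(c)$ equals $\prod_c b(c)$ exactly, because $a(c)=b(c)$ for each child. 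Hence $a(v)=0$, while $b(v)=\pm1$.
\end{itemize}

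\textbf{Conclusion.} Applying the claim to the root, which has depth $n$, gives $I(T,-1)=a(\text{root})$. This is nonzero exactly when $n\equiv 1,2\pmod 3$. Theorem~\ref{chacracteristic} then gives the corollary. As a sanity check, the case $m=1$ agrees with Theorem~\ref{pathmain} for $P_{n+1}$.

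The main obstacle is the case $d\equiv 2$. There the signs of $a$ and $b$ depend on the numbers of children, so one must track the exact equality $a(c)=b(c)$, not merely nonvanishing. This exact equality is what forces the cancellation $a(v)=0$ one level up.
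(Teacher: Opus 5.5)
Your proof is correct, and it follows a different route from the paper's.

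\textbf{The paper's route.} It runs the elimination process. It deletes the vertices at levels $n-2-3i$ and observes that what remains is a disjoint union of stars, together with the isolated root when $n\equiv 0 \pmod 3$. It then cites Theorem~\ref{general}. That is short once the elimination machinery is in place, and it illustrates the paper's main theorem. It is only a sketch, though, for two reasons.
\begin{itemize}
\item It does not check that the formal process, which is defined through graph-theoretic leaves, actually deletes those levels. When the root has a single child it is itself a leaf, so for $n\ge 3$ the level-$2$ vertices also lie in $U_2$.
\item The case $n\equiv 0$ really falls under Proposition~\ref{generalfactor}~(1) rather than Theorem~\ref{general}.
\end{itemize}
The paper's argument is sound in substance only as a direct deletion computation: each deleted vertex leaves an isolated vertex in $H_{i-1}-N[u_i]$.

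\textbf{Your route.} You track the pair $\bigl(I(T_v,-1),\,I(T_v-v,-1)\bigr)$ from the bottom up, using only Lemma~\ref{Recusive}~(i) and Lemma~\ref{Joint}~(i). You show the pair is periodic of period $3$ in the depth. The exact equality $a(c)=b(c)$ at depth $\equiv 2$ is what forces the cancellation one level higher. This argument has several advantages.
\begin{itemize}
\item It is self-contained and handles any number of children uniformly.
\item It makes clear that the bound $m$ plays no role: the statement holds for every rooted tree whose leaves all lie on one level.
\item It yields the exact value $I(T,-1)\in\{0,\pm1\}$, not just its vanishing or nonvanishing.
\end{itemize}
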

\begin{proof}
  Let $T$ be an $m$-ary tree rooted at $v_1$ in which all leaves have level $n$. 
We consider the following three cases.\\
\textbf{Case 1:} If $n \equiv 0 \pmod{3}$, applying the elimination process recursively, we remove the vertices at levels $n - 2 - 3i$ for $i = 0, \ldots, \dfrac{n-3}{3}$. As a result, we obtain a subgraph consisting of stars and an isolated vertex.\\
\textbf{Case 2:} If $n \equiv 1 \pmod{3}$, applying the elimination process recursively, we remove the vertices at levels $n - 2 - 3i$ for $i = 0, \ldots, \dfrac{n-4}{3}$. As a result, we obtain a subgraph consisting of stars.\\
\textbf{Case 3:} If $n \equiv 2 \pmod{3}$, applying the elimination process recursively, we remove the vertices at levels $n - 2 - 3i$ for $i = 0, \ldots, \dfrac{n-2}{3}$. As a result, we obtain a subgraph consisting of stars. \\
According to Theorem~\ref{general}, it follows that $n \equiv 1,2 \pmod{3}$ if and only if $\deg\ h_{R/I(T)}(t) = \alpha(T)$.
\end{proof}
From Corollary \ref{rootedtree}, we obtain the following corollary.
\begin{Corollary}
 Let $T$ be an $m$-ary tree with all leaves of level $n$. Then $a(R/I(T)) = 0$ if and only if $n \equiv 1,2 \pmod 3$.  
\end{Corollary}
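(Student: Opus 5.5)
The plan is to translate the statement about the $a$-invariant into a statement about the degree of the $h$-polynomial, and then apply Corollary~\ref{rootedtree} directly. By definition, $a(R/I(T)) = \deg\ h_{R/I(T)}(t) - \dim R/I(T)$. By the lemma in the preliminaries on the dimension of edge ideals, $\dim R/I(T) = \alpha(T)$. Hence
$$a(R/I(T)) = \deg\ h_{R/I(T)}(t) - \alpha(T),$$
and so $a(R/I(T)) = 0$ holds exactly when $\deg\ h_{R/I(T)}(t) = \alpha(T)$.

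Since $T$ is an $m$-ary tree with all leaves at level $n$, Corollary~\ref{rootedtree} applies verbatim. It says that $\deg\ h_{R/I(T)}(t) = \alpha(T)$ if and only if $n \equiv 1,2 \pmod 3$. Combining this with the equivalence above gives the claim.

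For completeness, the Remark $a(R/I(G)) \le 0$ shows that $a(R/I(T))$ is nonpositive. So in the remaining case $n \equiv 0 \pmod 3$ we actually get $a(R/I(T)) < 0$, which is consistent with $I(T,-1)=0$ there via Theorem~\ref{chacracteristic}.

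There is essentially no obstacle, since the statement is a direct reformulation of Corollary~\ref{rootedtree}. The only point to check is that the identification $\dim R/I(T) = \alpha(T)$ is used in the definition of $a$, and this is exactly what the dimension lemma provides. All the substantive work, namely the elimination process on the levels $n-2-3i$, has already been carried out in Corollary~\ref{rootedtree}.
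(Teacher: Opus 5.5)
Your proof is correct and follows the same route as the paper, which derives this corollary directly from Corollary~\ref{rootedtree}. As you do, it uses $a(R/I(T)) = \deg\ h_{R/I(T)}(t) - \dim R/I(T)$ together with $\dim R/I(T) = \alpha(T)$, so that $a(R/I(T))=0$ is equivalent to $\deg\ h_{R/I(T)}(t)=\alpha(T)$.
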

As another application of Theorem~\ref{general}, we consider bipartite graphs in which every vertex of one part is whiskered.
\begin{Corollary}
Let $G$ be a connected bipartite graph with partition $(U,V)$. If every vertex in $V$ is whiskered, then $\deg\ h_{R/I(G)}(t) = \alpha(G)$.

\end{Corollary}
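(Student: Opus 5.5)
Rather than running the elimination process, I plan to evaluate $I(G,-1)$ directly by splitting each independent set according to its trace on $V$. Then Theorem~\ref{chacracteristic} finishes the argument. I assume $G$ has at least one edge. A single isolated vertex is excluded implicitly: if $V=\emptyset$ the hypothesis is vacuous and $I(G,-1)=0$.

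\textbf{Step 1.} Every independent set $S$ of $G$ can be written uniquely as $S=T\cup W$ with $T=S\cap V$ and $W=S\cap U$. Since $V$ is an independent set of the bipartite graph $G$, every subset $T\subseteq V$ occurs. For a fixed $T$, the admissible sets $W$ are exactly the subsets of $U_T:=U\setminus N(T)$, the vertices of $U$ having no neighbor in $T$. Hence
\[
I(G,-1)=\sum_{T\subseteq V}(-1)^{|T|}\sum_{W\subseteq U_T}(-1)^{|W|}=\sum_{T\subseteq V}(-1)^{|T|}\,(1-1)^{|U_T|}.
\]
So only those $T$ with $U_T=\emptyset$ contribute, that is, those $T$ dominating all of $U$.

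\textbf{Step 2.} I will show that the only such $T$ is $T=V$. For each $v\in V$, choose a leaf $\ell_v$ adjacent to $v$. By bipartiteness $\ell_v\in U$, and $N(\ell_v)=\{v\}$. If $v\notin T$, then $\ell_v\in U_T$. Thus $U_T=\emptyset$ forces $V\subseteq T$, i.e.\ $T=V$.

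\textbf{Step 3.} Conversely, $U_V=\emptyset$. This holds because $G$ is connected with an edge, so every vertex of $U$ has a neighbor, and that neighbor necessarily lies in $V$. Therefore $I(G,-1)=(-1)^{|V|}\neq 0$, and Theorem~\ref{chacracteristic} gives $\deg\, h_{R/I(G)}(t)=\alpha(G)$.

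\textbf{Main obstacle.} The only delicate point is the bookkeeping in Step 1, namely that the sum factors as $(1-1)^{|U_T|}$ with the convention $0^0=1$. Beyond that, one needs the degenerate cases ($V=\emptyset$, or a single edge, where $G$ is a star covered by Lemma~\ref{star}) to be handled consistently.

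Alternatively, one could run the elimination process of Proposition~\ref{generalfactor}, where the whiskers make every piece a star, and apply Theorem~\ref{general}. However, controlling the iterated process is messier than the direct count above.
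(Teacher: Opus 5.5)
Your proof is correct, and it takes a different route from the paper. The paper stays inside its elimination framework. Because every $v\in V$ carries a pendant leaf, necessarily in $U$, the set $U_2(G)$ is exactly the set of non-leaf vertices of $U$. Deleting $U_2(G)$ leaves one star centered at each $v\in V$ with its pendant leaves. So the process stops after a single round, and Theorem~\ref{bipartite} (Proposition~\ref{generalfactor} plus Lemma~\ref{star}) gives $I(G,-1)=\prod_{v\in V}(-1)=(-1)^{|V|}\neq 0$.

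You instead sort independent sets by their trace $T$ on $V$. This gives the identity $I(G,-1)=\sum_{T\subseteq V,\ N(T)\supseteq U}(-1)^{|T|}$, valid for every bipartite graph, and the whiskers then force $T=V$. Both routes land on the same value $(-1)^{|V|}$.

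What your route buys:
\begin{itemize}
\item It is more elementary and self-contained. It avoids the telescoping through Lemma~\ref{Recusive}(i) and the check that each $H_{i-1}-N[u_i]$ has an isolated vertex.
\item It uses connectivity only to know that $U$ has no isolated vertices, so it applies verbatim to disconnected graphs.
\item The identity itself is a natural tool for the question the paper poses right after this corollary, about bipartite graphs of minimum degree at least $2$.
\end{itemize}
The paper's route buys uniformity: the corollary becomes a one-step instance of Theorem~\ref{general}. Contrary to your closing remark, no iteration is needed there, so that route is not messy in this case, only less direct.

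Your treatment of the degenerate case is right and worth keeping. For $G=K_1$ with its vertex in $U$, the hypothesis holds vacuously while $\deg h_{R/I(G)}(t)=0<1=\alpha(G)$. So the statement tacitly requires $G$ to have an edge, which the paper's proof also silently assumes. The single-edge case, on the other hand, needs no separate appeal to Lemma~\ref{star}, since your Steps 1--3 already cover it.
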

\begin{proof}
 Suppose that $U_L$ is the set of leaves in $U$ and $\overline{U}_L$ the complement of $U_L$ in $U$. Since every vertex in $V$ is whiskered, we have $\overline{U}_L = U_2(G)$. Applying the elimination process to $G$, we remove the vertices in $U_2(G)$ and obtain a subgraph that consists of stars. According to Theorem \ref{bipartite}, we have $\deg\ h_{R/I(G)}(t) = \alpha(G)$.
\end{proof}
Motivated by Theorem \ref{bipartite}, we pose the following question.
\begin{Question}
Let $G$ be a connected bipartite graph in which every vertex has degree at least $2$.  
When does $I(G,-1) \neq 0$?
\end{Question}
We give an answer to this question.    
For this purpose, we introduce the following setup.

{\bf Setup:} Let $G$ be a bipartite graph with bipartition $(U,V)$ such that every vertex has degree at least $2$, where $|U|=m$ and $|V|=n$. 
Then $G$ can be obtained from the complete bipartite graph $K_{m,n}$ by deleting a finite set of edges.

Let $a_1,a_2,\ldots,a_k$ denote the edges removed from $K_{m,n}$, where each
$a_i = u_i v_i$ with $u_i \in U$ and $v_i \in V$.
For $i=1,\ldots,k$, define $G_i := G_{i-1} - a_i$, with $G_0 := K_{m,n}$ and $G_k = G$.

In the following theorem, we give a necessary and sufficient condition under which the degree of the $h$-polynomial of $G$ attains its maximum value.
\begin{Theorem}\label{degreeatleast2}
Under Setup, we have
$$\deg \ h_{R/I(G)}(t) = \alpha(G) \text{ if and only if } \sum_{i=1}^{k}I\left(G_{i-1}-(N_{G_{i-1}}(u_i)\cup N_{G_{i-1}}(v_i)),-1 \right)\neq 1.$$
\end{Theorem}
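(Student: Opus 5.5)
By Theorem~\ref{chacracteristic}, it suffices to show that
\[
I(G,-1)=1-\sum_{i=1}^{k}I\left(G_{i-1}-(N_{G_{i-1}}(u_i)\cup N_{G_{i-1}}(v_i)),-1\right).
\]
Then $I(G,-1)\neq 0$ is exactly the stated condition. The plan is to telescope Lemma~\ref{Recusive}~(ii) along the chain of edge deletions $K_{m,n}=G_0\supset G_1\supset\cdots\supset G_k=G$ given in the Setup.

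The first step applies Lemma~\ref{Recusive}~(ii) to the graph $G_{i-1}$ and its edge $a_i=u_iv_i$. Since $G_{i-1}-a_i=G_i$, this gives
\[
I(G_{i-1},t)=I(G_i,t)-t^2\,I\left(G_{i-1}-(N_{G_{i-1}}(u_i)\cup N_{G_{i-1}}(v_i)),t\right).
\]
Rearranging, $I(G_i,t)=I(G_{i-1},t)+t^2 I(\cdots)$. Summing over $i=1,\ldots,k$ telescopes to
\[
I(G,t)=I(K_{m,n},t)+t^2\sum_{i=1}^{k} I\left(G_{i-1}-(N_{G_{i-1}}(u_i)\cup N_{G_{i-1}}(v_i)),t\right).
\]

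The second step evaluates at $t=-1$, where $t^2=1$. The value $I(K_{m,n},-1)$ comes from Lemma~\ref{Joint}~(ii), since $K_{m,n}=\overline{K}_m+\overline{K}_n$. Each $\overline{K}_m$ is a disjoint union of isolated vertices, so $I(\overline{K}_m,t)=(1+t)^m$ by Lemma~\ref{Joint}~(i). Hence
\[
I(K_{m,n},-1)=(1+t)^m+(1+t)^n-1\big|_{t=-1}=-1,
\]
using $m,n\ge 1$; in fact $m,n\ge 2$ here because every vertex has degree at least $2$. This gives
\[
I(G,-1)=-1+\sum_{i=1}^{k} I\left(G_{i-1}-(N_{G_{i-1}}(u_i)\cup N_{G_{i-1}}(v_i)),-1\right).
\]
So $I(G,-1)\neq 0$ if and only if the sum is $\neq 1$, and Theorem~\ref{chacracteristic} finishes the proof. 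This is the same condition as in the statement, up to an overall sign; my first displayed formula above had that sign reversed, and the displayed expression just obtained is the correct one.

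The main obstacle is bookkeeping rather than mathematics. Lemma~\ref{Recusive}~(ii) must be applied in $G_{i-1}$, where $a_i$ is still an edge, so that the neighborhoods are $N_{G_{i-1}}$ and not $N_G$. One also has to check that the sign convention in Lemma~\ref{Recusive}~(ii) matches. As a check, for a single edge $uv$, $I(\{u,v\},t)=1+2t$ and $I(\text{two isolated vertices},t)=(1+t)^2$, so $1+2t=(1+t)^2-t^2\cdot 1$. This agrees, since $G-(N(u)\cup N(v))$ is empty here and has $I=1$. The degree-at-least-$2$ hypothesis is not needed for the identity itself; it only guarantees that $G$ is obtained from $K_{m,n}$ with both parts nonempty.
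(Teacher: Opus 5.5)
Your proof is correct and is essentially the paper's argument: telescope Lemma~\ref{Recusive}~(ii) along $K_{m,n}=G_0\supset G_1\supset\cdots\supset G_k=G$ with neighborhoods taken in $G_{i-1}$, evaluate at $t=-1$ using $I(K_{m,n},-1)=-1$, and conclude with Theorem~\ref{chacracteristic}. Your opening display has the sign reversed; it should read $I(G,-1)=\sum_{i}(\cdots)-1$, as you later note yourself, and this does not matter because either form vanishes exactly when the sum equals $1$.
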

\begin{proof}
By Lemma \ref{Recusive} (ii), we have
$$I(G_0,t) = I(G_1,t) - t^2I(G_0-(N_{G_0}(u_1)\cup N_{G_0}(v_1)),t).$$
It follows that
$$I(G_1,t) = I(G_0,t) + t^2I(G_0-(N_{G_0}(u_1)\cup N_{G_0}(v_1)),t). $$
Proceeding recursively, we obtain
\begin{equation}\label{plus}
  I(G_i,t) = I(G_{i-1},t) + t^2I(G_{i-1}-(N_{G_{i-1}}(u_i)\cup N_{G_{i-1}}(v_i)),t)
\end{equation}
for $i=1,\ldots,k$. 
Summing the equalities \eqref{plus} for $i=1,\ldots,k$, we get
\begin{equation}\label{plus1}
  I(G,t) = I(G_k,t) = I(G_0,t) + t^2\sum_{i=1}^{k} I(G_{i-1}-(N_{G_{i-1}}(u_i)\cup N_{G_{i-1}}(v_i)),t).
\end{equation}
Since $I(G_0,t) = I(K_{m,n},t) = (1+t)^m + (1+t)^n - 1,$ we have $I(G_0,-1) = -1$. By substituting $t=-1$ into \eqref{plus1}, we obtain
$$I(G,-1) = \sum_{i=1}^{k} I(G_{i-1}-(N_{G_{i-1}}(u_i)\cup N_{G_{i-1}}(v_i)),-1) - 1.$$
Therefore, 
$$I(G,-1) \neq 0 \quad\text{if and only if}\quad \sum_{i=1}^{k} I(G_{i-1}-(N_{G_{i-1}}(u_i)\cup N_{G_{i-1}}(v_i)),-1) \neq 1.$$
 From Theorem \ref{chacracteristic}, the result follows.
\end{proof}
In the following corollary, we describe several choices of edges $a_1,\ldots,a_k$ in Setup that guarantee the $h$-polynomial of $R/I(G)$ attains its maximal degree.
\begin{Corollary}\label{degree21}
With Setup, if the edges $a_1,\ldots,a_k$ satisfy one of the following conditions:
\begin{itemize}
  \item[(i)] $a_1,\ldots,a_k$ form a matching of $K_{m,n}$ with $k \neq 1$;
  \item[(ii)] $a_1,\ldots,a_k$ form the edges of an even cycle with $k > 4$.
\end{itemize}
Then
\[
\deg\ h_{R/I(G)}(t) = \alpha(G).
\]
\end{Corollary}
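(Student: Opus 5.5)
The plan is to avoid the iterated recursion in Theorem~\ref{degreeatleast2} and compute $I(G,-1)$ directly by counting independent sets. Then, by Theorem~\ref{chacracteristic}, it suffices to show $I(G,-1)\neq 0$. Equivalently, by the identity $I(G,-1)=\sum_{i=1}^{k} I\bigl(G_{i-1}-(N_{G_{i-1}}(u_i)\cup N_{G_{i-1}}(v_i)),-1\bigr)-1$ from the proof of Theorem~\ref{degreeatleast2}, it suffices to show that this sum is not $1$.

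Let $F=\{a_1,\ldots,a_k\}$ be the set of removed edges, so $G=K_{m,n}-F$ on the bipartition $(U,V)$. An independent set $S$ of $G$ is of one of three kinds:
\begin{enumerate}
\item $S\subseteq U$;
\item $S\subseteq V$;
\item $A=S\cap U$ and $B=S\cap V$ are both nonempty.
\end{enumerate}
In the third case, independence means that every pair $\{a,b\}$ with $a\in A$, $b\in B$ is a non-edge of $G$, i.e.\ $A\times B\subseteq F$. The sets of the first two kinds contribute $(1+t)^m+(1+t)^n-1$, exactly as for $K_{m,n}$, and this equals $-1$ at $t=-1$. Hence
\[
I(G,-1)=-1+\sum_{(A,B)}(-1)^{|A|+|B|},
\]
where the sum runs over pairs of nonempty sets $A\subseteq U$, $B\subseteq V$ whose complete bipartite graph $A\times B$ is contained in $F$. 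Such pairs are the complete bipartite subgraphs $K_{|A|,|B|}$ of the graph formed by the edges of $F$.

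For (i), the edges of $F$ form a matching, so each vertex meets at most one edge of $F$. Therefore $|A|=|B|=1$ and $\{a,b\}\in F$, and each edge of $F$ contributes $(-1)^2=1$. This gives $I(G,-1)=k-1$, which is nonzero precisely because $k\neq 1$.

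For (ii), the edges of $F$ form an even cycle $C$ of length $k>4$. The complete bipartite subgraphs of $C$ are the following.
\begin{enumerate}
\item Single edges ($|A|=|B|=1$): there are $k$ of them, each contributing $+1$.
\item Copies of $K_{1,2}$ (one vertex of $C$ with both of its neighbours on $C$): there is one centred at each of the $k$ vertices, each contributing $(-1)^3=-1$.
\item $K_{1,s}$ with $s\geq 3$: impossible, since every vertex of $C$ has degree $2$ in $C$.
\item $K_{2,2}$ or larger: impossible, since $K_{2,2}$ is a $4$-cycle and $C$ has length $k>4$, so $C$ contains no $4$-cycle.
\end{enumerate}
The contributions from single edges and from copies of $K_{1,2}$ cancel, so $I(G,-1)=-1\neq 0$.

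The only delicate step is the enumeration of complete bipartite subgraphs in case (ii). One must not forget the $K_{1,2}$'s, which are exactly what cancels the edge contributions. One must also justify that $C$ contains no $K_{2,2}$, and this is precisely where the hypothesis $k>4$ is used: for $k=4$ the single $K_{2,2}$ would add $+1$ and give $I(G,-1)=0$. The Setup assumption that every vertex of $G$ has degree at least $2$ plays no role in this computation; it is only needed so that Theorem~\ref{degreeatleast2} applies. With $I(G,-1)\neq 0$ established in both cases, Theorem~\ref{chacracteristic} yields $\deg h_{R/I(G)}(t)=\alpha(G)$.
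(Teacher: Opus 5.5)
Your proof is correct and gives the same values as the paper: $I(G,-1)=k-1$ in case (i) and $I(G,-1)=-1$ in case (ii). The route, however, is different.

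The paper stays inside Theorem~\ref{degreeatleast2}. It deletes the edges of $F$ one at a time (consecutively along the cycle in case (ii)) and identifies each graph $G_{i-1}-(N_{G_{i-1}}(u_i)\cup N_{G_{i-1}}(v_i))$ explicitly. For a matching these graphs are all empty. For the cycle the graph is empty at $i=1$, a single isolated vertex for $2\le i\le k-1$, and a single edge at $i=k$. The relevant sum is therefore $k$, respectively $1+0+(-1)=0$.

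You instead classify the independent sets of $K_{m,n}-F$ directly. This gives the order-free identity $I(K_{m,n}-F,-1)=-1+\sum(-1)^{|A|+|B|}$, summed over nonempty $A\subseteq U$, $B\subseteq V$ with $A\times B\subseteq F$.

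The two computations are the same count organized differently. The term $t^2 I(G_{i-1}-(N_{G_{i-1}}(u_i)\cup N_{G_{i-1}}(v_i)),t)$ is precisely the generating function of the independent sets of $G_i$ containing both $u_i$ and $v_i$, i.e.\ of the bicliques of $\{a_1,\ldots,a_i\}$ that contain $a_i$. So the paper's sum is your sum grouped by the last-indexed edge of each biclique. For $2\le i\le k-1$, the edge $a_i$ cancels the $K_{1,2}$ centred at the vertex it shares with $a_{i-1}$. At $i=k$, the edge meets two such $K_{1,2}$'s, giving $-1$.

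Your version has several advantages:
\begin{itemize}
\item It avoids tracking intermediate graphs.
\item Independence of the removal order is automatic, rather than a separate remark.
\item The role of $k>4$ is transparent: it is exactly the absence of a $K_{2,2}$ in $C$. In the paper this is hidden in the claim that the last graph is an edge rather than two isolated vertices.
\item Since every bipartite graph is $K_{m,n}$ minus its bipartite complement, the identity applies verbatim to any deleted configuration $F$, without the degree hypothesis of the Setup.
\end{itemize}
The paper's version has the modest advantage of being a direct application of the theorem it has just proved.
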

\begin{proof}
We note that the independence polynomial of $G$ does not depend on the order in which the edges $a_1,\ldots,a_k$ are removed.

\medskip
\noindent
{\rm (i)} Suppose that $a_1,\ldots,a_k$ form a matching of $K_{m,n}$.
We remove the edges $a_1,\ldots,a_k$ successively from $K_{m,n}$ to obtain the graph $G$.
Since $a_1,\ldots,a_k$ form a matching, we have
 $G_{i-1}-(N_{G_{i-1}}(u_i)\cup N_{G_{i-1}}(v_i))= \emptyset$ for $i=1,\ldots,k$ and hence  $I(G_{i-1}-(N_{G_{i-1}}(u_i)\cup N_{G_{i-1}}(v_i)),t) = 1$ for $i=1,\ldots,k$. Therefore,
$$\sum_{i=1}^{k} I(G_{i-1}-(N_{G_{i-1}}(u_i)\cup N_{G_{i-1}}(v_i)),-1) = k.$$
Since $k\neq 1$, it follows from Theorem~\ref{degreeatleast2} that $$\deg\ h_{R/I(G)(t)} = \alpha(G).$$
(ii) Let $$a_1=u_1v_1, a_2=u_2v_2, a_3=u_3v_3,\ldots, a_{k}=u_kv_k$$
such that $$u_1 = u_2, u_3=u_4,\ldots,u_{k-1}=u_k$$ and $$v_1=v_k,v_2=v_3,\ldots,v_{k-2}=v_{k-1}.$$
Then 
$K_{m,n}-(N_{K_{m,n}}(u_1)\cup N_{K_{m,n}}(v_1)) = \emptyset$ and 
$G_{i-1}-(N_{G_{i-1}}(u_i)\cup N_{G_{i-1}}(v_i)$ is an isolated vertex for $ i=2,\ldots,k-1$ 
and 
$G_{k-1} - (N_{G_{k-1}}(u_k) \cup N_{G_{k-1}}(v_k))$ is an edge. 
Therefore, 
 $$\sum_{i=1}^{k}I(G_{i-1}-(N_{G_{i-1}}(u_i)\cup N_{G_{i-1}}(v_i)),t) = 1 + (k-2)(1+t) + (1+2t). $$
It follows that 
$$\sum_{i=1}^{k}I(G_{i-1}-(N_{G_{i-1}}(u_i)\cup N_{G_{i-1}}(v_i)),-1) = 0 \neq 1$$ 
Hence, by Theorem~\ref{degreeatleast2},
$$\deg\ h_{R/I(G)}(t) = \alpha(G).$$
\end{proof}
\begin{Example} Let $G$ be the bipartite graph:
\begin{center}
  \begin{tikzpicture}[scale=0.9, dot/.style={circle, draw, fill=white, inner sep=2pt}]
 \node[draw=none, fill=none] at (-1, 1.5) {$G:$};
    \node[dot,label=below:$u_1$] (u1) at (0, 0) {};
    \node[dot,label=below:$u_2$] (u2) at (2, 0) {}; 
    \node[dot,label=below:$u_4$] (u4) at (4, 0) {};
    \node[dot,label=below:$u_3$] (u3) at (6, 0) {}; 
    \node[dot,label=below:$u_5$] (u5) at (8, 0) {}; 

    \node[dot,label=above:$v_1$] (v1) at (1, 3) {};
    \node[dot,label=above:$v_2$] (v2) at (3, 3) {};
    \node[dot,label=above:$v_3$] (v3) at (5, 3) {};
    \node[dot,label=above:$v_4$] (v4) at (7, 3) {};
   
    \draw (v1) -- (u1)  (v1) -- (u3)  (v1) -- (u4)  (v1) -- (u5);

    \draw  (v2) -- (u2)  (v2) -- (u3)  (v2) -- (u4)  (v2) -- (u5);
    \draw (v3) -- (u1)  (v3) -- (u2)  (v3) -- (u3)  (v3) -- (u5);
  \draw (v4) -- (u1)  (v4) -- (u2)  (v4) -- (u3)  (v4) -- (u4)  (v4) -- (u5);
\end{tikzpicture}
\end{center}
We observe that $G$ is obtained from $K_{4,5}$ by deleting the edges
$v_1u_2$, $v_2u_1$, and $v_3u_3$, as follows.
\begin{center}
  \begin{tikzpicture}[scale=0.9, dot/.style={circle, draw, fill=white, inner sep=2pt}]
 \node[draw=none, fill=none] at (-3, 1.5) {$G:= K_{4,5}-\{v_1u_2,v_2u_1,v_3u_3\}:$};
    \node[dot,label=below:$u_1$] (u1) at (0, 0) {};
    \node[dot,label=below:$u_2$] (u2) at (2, 0) {}; 
    \node[dot,label=below:$u_4$] (u4) at (4, 0) {};
    \node[dot,label=below:$u_3$] (u3) at (6, 0) {}; 
    \node[dot,label=below:$u_5$] (u5) at (8, 0) {}; 

    \node[dot,label=above:$v_1$] (v1) at (1, 3) {};
    \node[dot,label=above:$v_2$] (v2) at (3, 3) {};
    \node[dot,label=above:$v_3$] (v3) at (5, 3) {};
    \node[dot,label=above:$v_4$] (v4) at (7, 3) {};
   
    \draw (v1) -- (u1)    (v1) -- (u3)  (v1) -- (u4)  (v1) -- (u5);
    \draw [dotted] (v1) -- (u2);
    \draw (v2) -- (u2)  (v2) -- (u3)  (v2) -- (u4)  (v2) -- (u5);
    \draw [dotted] (v2) -- (u1);
    \draw (v3) -- (u1)  (v3) -- (u2)  (v3) -- (u3) (v3) -- (u5);
\draw [dotted] (v3) -- (u4);
  \draw (v4) -- (u1)  (v4) -- (u2)  (v4) -- (u3)  (v4) -- (u4)  (v4) -- (u5);
\end{tikzpicture}
\end{center}
Since $v_1u_2$, $v_2u_1$, and $v_3u_3$ form a matching of $K_{4,5}$, by Corollary~\ref{degree21} we have
$$\deg\ h_{R/I(G)}(t) = \alpha(G).$$
\end{Example}

\begin{Example} Let $G$ be the bipartite graph:
\begin{center}
  \begin{tikzpicture}[scale=0.9, dot/.style={circle, draw, fill=white, inner sep=2pt}]
 \node[draw=none, fill=none] at (-1, 1.5) {$G:$};
    \node[dot,label=below:$u_1$] (u1) at (0, 0) {};
    \node[dot,label=below:$u_2$] (u2) at (2, 0) {}; 
    \node[dot,label=below:$u_3$] (u3) at (4, 0) {};
    \node[dot,label=below:$u_4$] (u4) at (6, 0) {}; 
    \node[dot,label=below:$u_5$] (u5) at (8, 0) {}; 

    \node[dot,label=above:$v_1$] (v1) at (1, 3) {};
    \node[dot,label=above:$v_2$] (v2) at (3, 3) {};
    \node[dot,label=above:$v_3$] (v3) at (5, 3) {};
    \node[dot,label=above:$v_4$] (v4) at (7, 3) {};
   
    \draw (v1) -- (u1) 
(v1) -- (u3)  
(v1) -- (u5);
    \draw (v2) -- (u1)  
(v2) -- (u4)  
(v2) -- (u5);
    \draw (v3) -- (u1)  (v3) -- (u2)  
 (v3) -- (u5);
    \draw 
(v4) -- (u2)  (v4) -- (u3)  (v4) -- (u4)  (v4) -- (u5);
\end{tikzpicture}
\end{center}
We observe that $G$ is obtained from $K_{4,5}$ by deleting the edges
$v_1u_2$, $u_2v_2$, $v_2u_3$, $u_3v_3$, $v_3u_4,$ and  $u_4v_1$ as follows.
\begin{center}
  \begin{tikzpicture}[scale=0.9, dot/.style={circle, draw, fill=white, inner sep=2pt}]
 \node[draw=none, fill=none] at (3, 4) {$G:= K_{4,5}-\{v_1u_2, u_2v_2, v_2u_3, u_3v_3, v_3u_4, u_4v_1\}:$};
    \node[dot,label=below:$u_1$] (u1) at (0, 0) {};
    \node[dot,label=below:$u_2$] (u2) at (2, 0) {}; 
    \node[dot,label=below:$u_3$] (u3) at (4, 0) {};
    \node[dot,label=below:$u_4$] (u4) at (6, 0) {}; 
    \node[dot,label=below:$u_5$] (u5) at (8, 0) {}; 

    \node[dot,label=above:$v_1$] (v1) at (1, 3) {};
    \node[dot,label=above:$v_2$] (v2) at (3, 3) {};
    \node[dot,label=above:$v_3$] (v3) at (5, 3) {};
    \node[dot,label=above:$v_4$] (v4) at (7, 3) {};
   
    \draw (v1) -- (u1)    (v1) -- (u3)   (v1) -- (u5);
    \draw [dotted] (v1) -- (u2) ;
    \draw (v2) -- (u1)   (v2) -- (u4)  (v2) -- (u5);
    \draw [dotted] (v2) -- (u2)  (v2) -- (u3);
    \draw (v3) -- (u1)  (v3) -- (u2) (v3) -- (u5);
\draw [dotted] (v3) -- (u3) (v3) -- (u4);
  \draw (v4) -- (u1) (v4) -- (u2)  (v4) -- (u3)  (v4) -- (u4)  (v4) -- (u5);
    \draw [dotted] (v1) -- (u4);
\end{tikzpicture}
\end{center}
Since $v_1u_2, u_2v_2, v_2u_3, u_3v_3,$ $ v_3u_4,$ and  $u_4v_1$ form a cycle of length $6$ of $K_{4,5}$, by Corollary \ref{degree21} we have 
$$\deg\ h_{R/I(G)}(t) = \alpha(G).$$
\end{Example}

\begin{Corollary}
Let $G$ be a connected bipartite graph.
Assume that, using the elimination process, $G$ decomposes into a disjoint union of star graphs
$S_1,\ldots,S_r$ and bipartite graphs $G_1,\ldots,G_s$, where every vertex of each $G_j$ has degree at least $2$.
If each $G_j$ either satisfies the conditions of Corollary~\ref{degree21} or is a complete bipartite graph, then
\[
\deg\ h_{R/I(G)}=\alpha(G).
\]
\end{Corollary}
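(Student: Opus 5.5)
The statement follows from Theorem~\ref{bipartite} (equivalently Theorem~\ref{general}); the only work is to show that every $G_j$ in the decomposition has $I(G_j,-1)\neq 0$. The elimination process gives
\[
I(G,-1)=\prod_{i=1}^{r} I(S_i,-1)\prod_{j=1}^{s} I(G_j,-1),
\]
and Lemma~\ref{star} gives $I(S_i,-1)=-1$ for every star. So it suffices to check each $G_j$ separately, distinguishing the two alternatives allowed in the hypothesis. If no $G_j$ occurs, i.e. $s=0$, the conclusion is immediate from Theorem~\ref{general}.

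\textbf{Case 1: $G_j$ is complete bipartite.} Write $G_j=K_{a,b}$. Every vertex of $G_j$ has degree at least $2$, so $a,b\ge 2$. By Lemma~\ref{Joint}(ii), $K_{a,b}=\overline{K}_a+\overline{K}_b$, hence
\[
I(K_{a,b},t)=(1+t)^a+(1+t)^b-1,
\]
so $I(G_j,-1)=-1\neq 0$. This is the same computation used for $I(G_0,-1)$ in the proof of Theorem~\ref{degreeatleast2}.

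\textbf{Case 2: $G_j$ satisfies the conditions of Corollary~\ref{degree21}.} Then $G_j$ is in Setup form: it arises from some $K_{m,n}$ by deleting a matching with $k\neq 1$, or the edge set of an even cycle of length $k>4$. Corollary~\ref{degree21} yields $\deg h_{R/I(G_j)}(t)=\alpha(G_j)$. By Theorem~\ref{chacracteristic} applied to $G_j$ itself, this is equivalent to $I(G_j,-1)\neq 0$. Alternatively, one can read it directly from the proof of Theorem~\ref{degreeatleast2}: there $I(G_j,-1)=\Sigma-1$, where $\Sigma$ is the sum of the correction terms at $-1$. In the matching case $\Sigma=k\neq 1$, and in the cycle case $\Sigma=0$, so $I(G_j,-1)\in\{k-1,-1\}$, which is nonzero.

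\textbf{Conclusion.} Every factor $I(G_j,-1)$ is nonzero, so Theorem~\ref{bipartite} gives $\deg h_{R/I(G)}(t)=\alpha(G)$.

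The main obstacle is a bookkeeping point rather than a computation. Corollary~\ref{degree21} is phrased as a statement about degrees, while Theorem~\ref{bipartite} needs nonvanishing of $I(G_j,-1)$. Theorem~\ref{chacracteristic} bridges the two, and it holds for any simple graph, so it applies to each $G_j$ directly. One must also confirm that each $G_j$ genuinely fits the Setup, i.e. that it is bipartite with minimum degree at least $2$; this holds by the way the decomposition is produced.
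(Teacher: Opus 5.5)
Your proposal is correct and follows the paper's proof. Theorem~\ref{general} (equivalently Theorem~\ref{bipartite}) reduces everything to checking $I(G_j,-1)\neq 0$ for each $G_j$: for the complete bipartite pieces $I(K_{a,b},-1)=-1$, and for the others Corollary~\ref{degree21} gives it via Theorem~\ref{chacracteristic}; your explicit values $I(G_j,-1)\in\{k-1,-1\}$, read off from the proof of Theorem~\ref{degreeatleast2}, just make the paper's one-line appeal to Corollary~\ref{degree21} more concrete.
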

\begin{proof}
By Theorem~\ref{general}, we have $\deg\ h_{R/I(G)}=\alpha(G)$ if and only if
$I(G_j,-1) \neq 0$ for all $j=1,\ldots,s$.
If $G_j$ satisfies the conditions of Corollary~\ref{degree21}, then
$I(G_j,-1)\neq0$.
If $G_j$ is a complete bipartite graph, then
$I(G_j,-1)=I(K_{m,n},-1)=-1\neq0$.
Therefore $I(G_j,-1)\neq0$ for all $j$, and the result follows.
\end{proof}
\subsection{Degree of $h$-polynomials of Cameron-Walker graphs}
In this section, we investigate the degree of the $h$-polynomial of Cameron-Walker graphs in terms of the independence number. 

It is well known that given graph $G$ 
$$\nu(G) \leq \reg R/I(G) \leq \mu(G),$$
where $\nu(G)$ is the induced matching number of $G$ and $\mu(G)$ is the matching number of $G$. A graph $G$ with $\nu(G) = \mu(G)$ is called a Cameron-Walker graph. Cameron-Walker graphs are classified in \cite[Theorem 1]{Cameron}, as follows.
\begin{Theorem}
  Let $G$ be a connected graph. Then $G$ is Cameron-Walker graph if and only if it is one of the following graphs:\\
{\rm (i)} A star graph;\\
{\rm (ii)} A star triangle;\\
{\rm (iii)} A graph consisting of a connected bipartite graph with a partition $(U,V)$ such that there at least one leaf edge is attached to each vertex $u\in U$ and that there may be possible some pendant triangles attached to each vertex $v \in V$.
\end{Theorem}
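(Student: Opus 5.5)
The plan is to prove both directions directly from the definitions of $\nu(G)$ and $\mu(G)$. For sufficiency we exhibit an induced matching of size $\mu(G)$ in each listed family. For necessity we start from a maximum matching that is also induced and read off the structure of $G$. We assume $G$ is connected with at least one edge.

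\emph{Sufficiency.} A star has $\nu=\mu=1$. A star triangle consists of triangles $\{c,a_i,b_i\}$ sharing the vertex $c$; we take the edges $a_ib_i$. They are pairwise disjoint, there are no edges between distinct pairs, and every edge of a matching lies inside some triangle, so $\mu$ equals the number of triangles $=\nu$.

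For a graph of type (iii), we choose for each $u\in U$ one leaf edge $u\ell_u$, and for each pendant triangle $\{v,a,b\}$ at $v\in V$ the edge $ab$. These edges form an induced matching. Indeed, the leaves $\ell_u$ and the triangle vertices $a,b$ have no other neighbours, and $U$ is independent in the bipartite part. For the matching upper bound, note that every edge of $G$ either contains a vertex of $U$ or lies inside the vertex set of one pendant triangle. A matching uses at most one edge through each $u\in U$ and at most one edge inside each $3$-set. Hence $\mu(G)\le |U|+\#\{\text{triangles}\}\le\nu(G)$, and equality follows since $\nu\le\mu$.

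\emph{Necessity.} Suppose $\nu(G)=\mu(G)$ and fix a maximum matching $M$ that is induced. Let $W=V(G)\setminus V(M)$. Maximality of $M$ forces $W$ to be independent, and inducedness forces the only edges inside $V(M)$ to be those of $M$. Hence
\[
E(G)=M\cup\{\text{edges between } V(M) \text{ and } W\}.
\]
Now take $xy\in M$. If $x$ has a neighbour $w\in W$ and $y$ has a neighbour $z\in W$ with $z\neq w$, then $w\,x\,y\,z$ is an $M$-augmenting path, which is impossible. So each edge $xy\in M$ is of one of two kinds:
\begin{enumerate}
\item[(a)] exactly one endpoint, say $x$, has neighbours in $W$; then $y$ is a leaf;
\item[(b)] both $x$ and $y$ are adjacent to a single common $w\in W$ and to nothing else in $W$; then $\{w,x,y\}$ is a pendant triangle at $w$.
\end{enumerate}
(If neither endpoint has a $W$-neighbour, connectivity forces $G=K_2$, which is a star.)

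Let $U$ be the set of endpoints $x$ of type-(a) edges, and let $V$ be the set of vertices of $W$ that have a neighbour in $U$. Then $G$ is the bipartite graph on $(U,V)$ with a leaf attached to each $u\in U$ and pendant triangles hung on vertices of $W$. The case analysis finishes the proof:
\begin{itemize}
\item If there are no type-(a) edges, connectivity forces all triangles to share one $w$, so $G$ is a star triangle.
\item If $M=\{xy\}$ is a single type-(a) edge and there are no triangles, then $G$ is a star centred at $x$.
\item Otherwise connectivity forces every $w\in W$ to have a neighbour in $U$, which is what places each triangle apex in $V$ and makes $G[U\cup V]$ connected. This gives type (iii).
\end{itemize}

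The main obstacle I expect is this last bookkeeping step: checking that every triangle apex lies in $V$ (equivalently, has a neighbour in $U$), and that the degenerate configurations fall exactly into cases (i) and (ii). A pendant triangle attached to a vertex of $W$ with no $U$-neighbour can only occur when there are no type-(a) edges at all. I would handle this by arguing on connectivity through $W$, since all paths must alternate between $V(M)$ and $W$.
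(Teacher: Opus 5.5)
Your proof is correct. The paper does not prove this classification at all: it quotes it from Cameron and Walker \cite{Cameron} and uses it only as input for Theorem~\ref{Cameron}. So your argument is a self-contained proof where the text has none, rather than a variant of something in the paper.

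Your route is the natural one:
\begin{itemize}
\item fix a maximum matching $M$ that is also induced;
\item note that $W=V(G)\setminus V(M)$ is independent and the only edges inside $V(M)$ are those of $M$;
\item use the length-$3$ augmenting path $w\,x\,y\,z$ to force each $M$-edge to be either a leaf edge or the base of a pendant triangle.
\end{itemize}
Your sufficiency sandwich $\mu(G)\le |U|+\#\{\text{triangles}\}\le \nu(G)$ also yields the explicit value $\nu(G)=\mu(G)=|U|+\#\{\text{triangles}\}$. Since $\nu(G)\le\reg R/I(G)\le\mu(G)$, this is exactly $\reg R/I(G)$ for these graphs, which citing the theorem alone does not give.

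The step you flag closes quickly, but not via the remark that paths alternate between $V(M)$ and $W$; they need not, since paths may use $M$-edges. Argue instead as follows.
\begin{itemize}
\item Suppose $U\neq\emptyset$ and some $w\in W$ had no neighbour in $U$. Then $N(w)$ would consist only of base vertices of triangles with apex $w$. Their neighbourhoods lie in $\{w\}$ together with those bases, so this set would be an entire connected component avoiding $U$. That contradicts connectivity, hence $W=V$.
\item Connectivity of $G[U\cup V]$ follows because neither a leaf nor a base vertex $a$ of a triangle $\{w,a,b\}$ can be an interior vertex of a path joining two vertices of $U\cup V$. Here $N(a)=\{w,b\}$ and $N(b)=\{w,a\}$, so a path entering $\{a,b\}$ from $w$ cannot leave again.
\end{itemize}

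Your standing assumption that $G$ has an edge is genuinely needed. $K_1$ has $\nu=\mu=0$ and is not literally on the list unless one counts it as a trivial star.
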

\begin{center}
\begin{tikzpicture}[
    dot/.style={circle, draw, fill=white, inner sep=1.2pt},
    every label/.style={font=\scriptsize}
]
    \draw[dashed] (-1, -1.5) rectangle (10, 1.5);
    \node at (4.5, 0) {connected bipartite graph on $\{v_1, \dots, v_m\} \cup \{u_1, \dots, u_n\}$};
    \node[dot, label=below:$v_1$] (v1) at (0.5, 1.5) {};
    \node[dot, label=below:$v_2$] (v2) at (3.5, 1.5) {};
    \node at (5.25, 0.8) {$\dots$};
    \node[dot, label=below:$v_m$] (vm) at (7, 1.5) {};
    \node[dot, label=above:$u_1$] (w1) at (0.2, -1.5) {};
    \node[dot, label=above:$u_2$] (w2) at (3.5, -1.5) {};
    \node at (5.25, -0.8) {$\dots$};
    \node[dot, label=above:$u_n$] (wn) at (7, -1.5) {};
    \node[dot, label=above:$x_1^{(1)}$] (x11) at (-0.2, 2.5) {};
    \node[dot, label=above:$x_{s_1}^{(1)}$] (x1s) at (1.2, 2.5) {};
    \draw (v1) -- (x11); \draw (v1) -- (x1s);
    \node at (0.5, 2.3) {$\dots$};
    \node[dot, label=above:$x_1^{(2)}$] (x21) at (2.8, 2.5) {};
    \node[dot, label=above:$x_{s_2}^{(2)}$] (x2s) at (4.2, 2.5) {};
    \draw (v2) -- (x21); \draw (v2) -- (x2s);
    \node at (3.5, 2.3) {$\dots$};
    \node[dot, label=above:$x_1^{(m)}$] (xm1) at (6.3, 2.5) {};
    \node[dot, label=above:$x_{s_m}^{(m)}$] (xms) at (7.7, 2.5) {};
    \draw (vm) -- (xm1); \draw (vm) -- (xms);
    \node at (7, 2.3) {$\dots$};
    \node[dot, label=left:$y_{1,1}^{(1)}$] (y111) at (-0.5, -2.8) {};
    \node[dot, label=below:$y_{1,2}^{(1)}$] (y112) at (-0.1, -3.2) {};
    \node[dot, label=below:$y_{t_1,1}^{(1)}$] (y1t1) at (0.6, -3.2) {};
    \node[dot, label=right:$y_{t_1,2}^{(1)}$] (y1t2) at (1.0, -2.8) {};
    \draw (w1) -- (y111) -- (y112) -- (w1);
    \draw (w1) -- (y1t1) -- (y1t2) -- (w1);
    \node at (0.3, -2.8) {$\dots$};
    \node[dot, label=left:$y_{1,1}^{(2)}$] (y211) at (2.7, -2.8) {};
    \node[dot, label=below:$y_{1,2}^{(2)}$] (y212) at (3.1, -3.2) {};
    \node[dot, label=below:$y_{t_2,1}^{(2)}$] (y2t1) at (3.9, -3.2) {};
    \node[dot, label=right:$y_{t_2,2}^{(2)}$] (y2t2) at (4.3, -2.8) {};
    \draw (w2) -- (y211) -- (y212) -- (w2);
    \draw (w2) -- (y2t1) -- (y2t2) -- (w2);
    \node at (3.5, -2.8) {$\dots$};
    \node[dot, label=left:$y_{1,1}^{(n)}$] (yn11) at (6.2, -2.8) {};
    \node[dot, label=below:$y_{1,2}^{(n)}$] (yn12) at (6.6, -3.2) {};
    \node[dot, label=below:$y_{t_n,1}^{(n)}$] (ynt1) at (7.4, -3.2) {};
    \node[dot, label=right:$y_{t_n,2}^{(n)}$] (ynt2) at (7.8, -2.8) {};
    \draw (wn) -- (yn11) -- (yn12) -- (wn);
    \draw (wn) -- (ynt1) -- (ynt2) -- (wn);
    \node at (7, -2.8) {$\dots$};
    \node at (5.25, 2) {$\dots$};
    \node at (5.25, -2.3) {$\dots$};
\end{tikzpicture}\\
\textbf{Figure 2.} Cameron-Walker graph.
\end{center}
\begin{Theorem}\label{Cameron}
Let $G$ be a Cameron--Walker graph. Then
\[
\deg h_{R/I(G)}=\alpha(G)
\]
unless $G$ is a star triangle with an even number of triangles. 
In that case,
\[
\deg h_{R/I(G)}=\alpha(G)-1.
\]
\end{Theorem}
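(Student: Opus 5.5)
The plan is to use the classification of connected Cameron--Walker graphs quoted above and show, case by case, that $I(G,-1)\neq 0$, except for star triangles with an even number of triangles. The conclusion then follows from Theorem~\ref{chacracteristic}. If $G$ is disconnected, each component is a Cameron--Walker graph, so Lemma~\ref{cennected} reduces to the connected case; I would read the statement for connected $G$, as the classification does. For a star, Lemma~\ref{star} gives $I(S_n,-1)=-1\neq 0$ directly.

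\emph{Star triangles.} Let $G$ consist of $k\ge 1$ triangles sharing a common vertex $w$. Lemma~\ref{Recusive}(i) applied at $w$, together with Lemma~\ref{Joint}(i), gives
$$I(G,t)=I(P_2,t)^k+t=(1+2t)^k+t.$$
Hence $I(G,-1)=(-1)^k-1$. This is nonzero (equal to $-2$) for $k$ odd and zero for $k$ even. For $k$ even I would differentiate:
$$I'(G,t)=2k(1+2t)^{k-1}+1,$$
so $I'(G,-1)=2k(-1)^{k-1}+1=1-2k\neq 0$. Thus the minimal $i$ with $I^{(i)}(G,-1)\neq0$ is $1$, and Theorem~\ref{chacracteristic} gives $\deg h_{R/I(G)}=\alpha(G)-1$.

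\emph{Type (iii).} Let $W$ be the side of the bipartite core whose vertices each carry at least one leaf, and let $T=\{t_1,\dots,t_p\}$ be the other side, whose vertices may carry pendant triangles. Since the core is connected, every $t_j$ has a neighbour $w\in W$, and $w$ has a leaf. Rather than verify in full that the elimination process behaves well, I would copy the argument of Proposition~\ref{generalfactor} directly with the deletion set $T$. Put $H_0=G$ and $H_j=H_{j-1}-t_j$. Lemma~\ref{Recusive}(i) gives
$$I(G,t)=I(H_p,t)+t\sum_{j=1}^{p}I(H_{j-1}-N[t_j],t).$$
In each $H_{j-1}-N[t_j]$ a neighbour $w\in W$ of $t_j$ has been removed while its leaf survives as an isolated vertex. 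Therefore every summand vanishes at $t=-1$.

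It remains to evaluate $I(H_p,-1)$. The graph $H_p=G-T$ is a disjoint union of the stars centred at the vertices of $W$ (each with its leaves) and one copy of $P_2$ for each pendant triangle. By Lemma~\ref{Joint}(i) and Lemma~\ref{star}, $I(G,-1)=I(H_p,-1)=\pm1\neq0$, so $\deg h_{R/I(G)}=\alpha(G)$.

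The main obstacle I expect is bookkeeping in case (iii). One must check that the isolated-leaf argument survives the successive deletions: deleting earlier $t_i$ never removes a vertex of $W$ or its leaves, so the leaf of a neighbour of $t_j$ is still present in $H_{j-1}$ and becomes isolated once $N[t_j]$ is removed. One must also check degenerate situations, such as some $t_j$ with no triangles and a single neighbour, where $t_j$ is itself a leaf. Neither issue affects the computation above, since that argument does not need $t_j$ to have degree at least $2$.
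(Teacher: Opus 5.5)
Your proposal is correct and follows essentially the paper's proof. It uses the same closed form $I(G,t)=(1+2t)^k+t$ and first-derivative check for star triangles, Lemma~\ref{star} for stars, and, for type (iii), the same telescoping use of Lemma~\ref{Recusive}(i) over the triangle-side vertices, with a surviving leaf of $W$ making each correction term vanish at $t=-1$ and the remaining graph a disjoint union of stars. The only difference is cosmetic: you delete all of $T$, whereas the paper deletes only the triangle-side vertices of degree at least $2$ and leaves the degree-one ones as extra leaves of the $W$-stars; your explicit justification of the isolated leaf also clarifies the paper's garbled description of which side is deleted.
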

\begin{proof}
  (i) Suppose that $G$ is a star triangle with $m\ge 1$ triangles and center vertex $u$.  By Lemma \ref{Recusive} (i), we get
$$I(G,t) = I(G-u,t) + tI(G-N[u],t).$$
But $G-N[u]$ is the empty graph, so $I(G - N[u],t) =1$. The graph $G - u$ consists of a disjoint union of $m$ copies of $K_2$. Therefore, $I(G-u,t)=(1+2t)^m$. Thus, 
$$I(G,t) = (1+2t)^m + t.$$
Therefore,
$$
I(G,-1) = \left\{
            \begin{array}{ll}
              -2 & \hbox{ if } m \text{ is odd}; \\
              0 & \hbox{ if } m \text{ is even}.
            \end{array}
          \right.
$$
If $m$ is even, then $$I'(G,t) = 2m(1+2t)^{m-1} + 1,$$ so $I'(G,-1) \neq 0$. According to Theorem \ref{chacracteristic}, we obtain
$$
\deg\ h_{R/I(G)}(t) = \left\{
                        \begin{array}{ll}
                          \alpha(G) & \hbox{ if }  m \text{ is odd}; \\
                          \alpha(G) - 1 & \hbox{ if } m \text{ is even}.
                        \end{array}
                      \right.
$$

(ii). {\bf Case 1.} If $G$ is a star graph, then the conclusion follows from Lemma \ref{star}.
\\
{\bf Case 2.} In this case, we can apply Theorem \ref{general} to Cameron--Walker graphs. 
However, for the reader’s convenience, we provide a simple and detailed proof.
 If $G$ is neither a star graph nor a star triangle graph, suppose that $(U,V)$ is a partition of the connected bipartite graph of $G$ and that every $v\in V$ has at least one leaf edge. There may be some pendant triangles attached to each vertex $v \in V$. Assume that $V = \{v_1, \ldots, v_n\}$ and that $\deg(v_i) \geq 2$ for $i = 1, \ldots, r$. Set $G_i = G_{i-1}-v_i$ for $i=1,\ldots,r$, where $G_0:=G$. 
By Lemma \ref{Recusive} (i), we have
$$I(G_{i-1},t) = I(G_i,t) + tI(G_{i-1}- N_{G_{i-1}}[v_i],t)$$
for $i=1,\ldots,r$. Adding the above equalities, we get
$$I(G,t) = I(G_r,t) + t \sum_{i=1}^{r}I(G_{i-1}-N_{G_{i-1}}[v_i],t).$$ 
We note that $G_{i-1}-N_{G_{i-1}}[v_i]$ contains at least one isolated vertex for $i =1,\ldots,r$, therefore $$\sum_{i=1}^{r}I(G_{i-1}-N_{G_{i-1}}[v_i],-1) = 0.$$
Moreover, $G_r = G-\{v_1,\ldots,v_r\}$ is a graph that consists of a disjoint union of star graphs. It follows that
$$I(G_r,-1) \neq 0.$$
Hence,
$$I(G,-1) \neq 0.$$
By Theorem \ref{chacracteristic}, we obtain $$\deg\ h_{R/I(G)}(t) = \alpha(G).$$
\end{proof}

We give an example of Cameron-Walker graph which would be helpful to under stand of the proof of Theorem \ref{Cameron}.
\begin{Example}
Let $G$ be the following Cameron-Walker graph:\\
  \begin{center}
\begin{tikzpicture}[
    dot/.style={circle, draw, fill=white, inner sep=2pt}
]


        \node[dot] (t1) at (-1,3) {};
        \node[dot] (t2) at (0,3) {};
        \node[dot] (t3) at (1,3) {};
        \node[dot] (t4) at (2,3) {};
        \node[dot] (t5) at (3,3) {};
        
        \node[dot] (m1) at (0,2) {};
        \node[dot] (m2) at (2,2) {};
        \node[dot] (m3) at (3,2) {};
        \draw (m1)--(t1) (m1)--(t2) (m1)--(t3);
        \draw (m2)--(t4) (m3)--(t5);
        
        \node[dot,label=left:$u_1$] (k1) at (-1,0) {};
        \node[dot,label=left:$u_2$] (k2) at (0.5,0) {};
        \node[dot,label=left:$u_3$] (k3) at (2,0) {};
        \node[dot] (k4) at (3,0) {};
        \draw (m1)--(k1) (m1)--(k3) (m2)--(k3) (m3)--(k2) (m3)--(k4) (m2)--(k2) (m3)--(k3);
 \node[draw=none, fill=none] at (-3, 1) {$G:$};
        \node[dot] (d1) at (-2,-0.5) {};
        \node[dot] (d2) at (-1.5,-1) {};
        \node[dot] (d3) at (-0.5,-1) {};
        \node[dot] (d4) at (0,-0.5) {};
        \draw (k1)--(d1) (k1)--(d2) (k1)--(d3) (k1)--(d4);
        \draw (d1)--(d2) (d3)--(d4);
        
        \node[dot] (e1) at (1.5,-1) {};
        \node[dot] (e2) at (2.5,-1) {};
        \draw (k3)--(e1) (k3)--(e2) (e1)--(e2);
\end{tikzpicture}
\end{center}
The induced subgraph graph $G-u_1$ is as follows. 
\begin{center}
\begin{tikzpicture}[
    dot/.style={circle, draw, fill=white, inner sep=2pt},
ring/.style={circle, draw, fill=black, inner sep=1pt}
]


        \node[dot] (t1) at (-1,3) {};
        \node[dot] (t2) at (0,3) {};
        \node[dot] (t3) at (1,3) {};
        \node[dot] (t4) at (2,3) {};
        \node[dot] (t5) at (3,3) {};
        
        \node[dot] (m1) at (0,2) {};
        \node[dot] (m2) at (2,2) {};
        \node[dot] (m3) at (3,2) {};
        \draw (m1)--(t1) (m1)--(t2) (m1)--(t3);
        \draw (m2)--(t4) (m3)--(t5);
        
        \node[ring] (k1) at (-1,0) {};
         \node[dot,label=left:$u_2$] (k2) at (0.5,0) {};
        \node[dot,label=left:$u_3$] (k3) at (2,0) {};
        \node[dot] (k4) at (3,0) {};
        \draw [dotted](m1)--(k1); 
        \draw (m1)--(k3) (m2)--(k3) (m3)--(k2) (m3)--(k4) (m2)--(k2) (m3)--(k3);
 \node[draw=none, fill=none] at (-3, 1) {$G_1=G-u_1:$};
        \node[dot] (d1) at (-2,-0.5) {};
        \node[dot] (d2) at (-1.5,-1) {};
        \node[dot] (d3) at (-0.5,-1) {};
        \node[dot] (d4) at (0,-0.5) {};
        \draw [dotted] (k1)--(d1) (k1)--(d2) (k1)--(d3) (k1)--(d4);
        \draw (d1)--(d2) (d3)--(d4);
        
        \node[dot] (e1) at (1.5,-1) {};
        \node[dot] (e2) at (2.5,-1) {};
        \draw (k3)--(e1) (k3)--(e2) (e1)--(e2);
\end{tikzpicture}
\end{center}
Also, the induced subgraph $G - N[u_1]$ consists of isolated vetices.
\begin{center}
\begin{tikzpicture}[
    dot/.style={circle, draw, fill=white, inner sep=2pt},
ring/.style={circle, draw, fill=black, inner sep=1pt}
]


        \node[dot] (t1) at (-1,3) {};
        \node[dot] (t2) at (0,3) {};
        \node[dot] (t3) at (1,3) {};
        \node[dot] (t4) at (2,3) {};
        \node[dot] (t5) at (3,3) {};
        
        \node[ring] (m1) at (0,2) {};
        \node[dot] (m2) at (2,2) {};
        \node[dot] (m3) at (3,2) {};
        \draw [dotted] (m1)--(t1) (m1)--(t2) (m1)--(t3);
        \draw (m2)--(t4) (m3)--(t5);
        
        \node[ring] (k1) at (-1,0) {};
          \node[dot,label=left:$u_2$] (k2) at (0.5,0) {};
        \node[dot,label=left:$u_3$] (k3) at (2,0) {};
        \node[dot] (k4) at (3,0) {};
        \draw [dotted](m1)--(k1); 
        \draw [dotted] (m1)--(k3);
        \draw  (m2)--(k3) (m3)--(k2) (m3)--(k4) (m2)--(k2) (m3)--(k3);
 \node[draw=none, fill=none] at (-3, 1) {$G-N[u_1]:$};
        \node[ring] (d1) at (-2,-0.5) {};
        \node[ring] (d2) at (-1.5,-1) {};
        \node[ring] (d3) at (-0.5,-1) {};
        \node[ring] (d4) at (0,-0.5) {};
        \draw [dotted] (k1)--(d1) (k1)--(d2) (k1)--(d3) (k1)--(d4);
        \draw [dotted] (d1)--(d2) (d3)--(d4);
        
        \node[dot] (e1) at (1.5,-1) {};
        \node[dot] (e2) at (2.5,-1) {};
        \draw  (k3)--(e1) (k3)--(e2) (e1)--(e2);
\end{tikzpicture}
\end{center}
The induced subgraph $G_3$ is a disjoint union of star graphs.
\begin{center}
\begin{tikzpicture}[
    dot/.style={circle, draw, fill=white, inner sep=2pt},
ring/.style={circle, draw, fill=black, inner sep=1pt}
]


        \node[dot] (t1) at (-1,3) {};
        \node[dot] (t2) at (0,3) {};
        \node[dot] (t3) at (1,3) {};
        \node[dot] (t4) at (2,3) {};
        \node[dot] (t5) at (3,3) {};
        
        \node[dot] (m1) at (0,2) {};
        \node[dot] (m2) at (2,2) {};
        \node[dot] (m3) at (3,2) {};
        \draw (m1)--(t1) (m1)--(t2) (m1)--(t3);
        \draw (m2)--(t4) (m3)--(t5);
        \node[draw=none, fill=none] at (-4, 1) {$G_3=G -\{u_1,u_2,u_3\}:$};
        \node[ring] (k1) at (-1,0) {};
        \node[ring] (k2) at (1,0) {};
        \node[ring] (k3) at (2,0) {};
        \node[dot] (k4) at (3,0) {};
        \draw  [dotted] (m1)--(k1) (m1)--(k3) (m2)--(k3) (m3)--(k2) (m2)--(k2) (m3)--(k3);
     \draw (m3)--(k4);
        \node[dot] (d1) at (-2,-0.5) {};
        \node[dot] (d2) at (-1.5,-1) {};
        \node[dot] (d3) at (-0.5,-1) {};
        \node[dot] (d4) at (0,-0.5) {};
        \draw [dotted] (k1)--(d1) (k1)--(d2) (k1)--(d3) (k1)--(d4);
        \draw (d1)--(d2) (d3)--(d4);
        
        \node[dot] (e1) at (1.5,-1) {};
        \node[dot] (e2) at (2.5,-1) {};
        \draw  [dotted] (k3)--(e1) (k3)--(e2);
        \draw (e1)--(e2);
\end{tikzpicture}
\end{center}
\end{Example}

\begin{Corollary}\label{avariant}
Let $G$ be a Cameron-Walker graph that is not a star triangle consisting of an even number of triangles. Then
\[
\deg h_{R/I(G)}(t)=\dim R/I(G)=\alpha(G),
\]
or equivalently,
\[
a(R/I(G))=0.
\]
\end{Corollary}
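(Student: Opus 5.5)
This corollary should follow directly from Theorem \ref{Cameron} and the lemma identifying $\dim R/I(G)$ with $\alpha(G)$. The plan has three steps.

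First, apply Theorem \ref{Cameron}. Since $G$ is a Cameron--Walker graph that is not a star triangle with an even number of triangles, it gives $\deg h_{R/I(G)}(t)=\alpha(G)$. If $G$ is disconnected, I would reduce to the connected components. Each component is again Cameron--Walker, since both $\nu$ and $\mu$ are additive over components and $\nu\le\mu$ holds componentwise. I would then combine Lemma \ref{cennected} with the theorem, provided no component is an even star triangle. In the connected case, which is the setting of the classification, no reduction is needed.

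Second, invoke the lemma from the preliminaries stating $\dim R/I(G)=\alpha(G)$. This yields the chain $\deg h_{R/I(G)}(t)=\dim R/I(G)=\alpha(G)$.

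Third, by the definition of the $a$-invariant, $a(R/I(G))=\deg h_{R/I(G)}(t)-\dim R/I(G)=0$. The equivalence runs in both directions: $a(R/I(G))=0$ means $\deg h=\dim R/I(G)$, and $\dim R/I(G)=\alpha(G)$ always holds.

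There is no real obstacle here. The only point needing care is the bookkeeping of the exceptional case. For completeness, the computation $I(G,t)=(1+2t)^m+t$ shows that in the excluded even case $I(G,-1)=0$ and $I'(G,-1)=2m(-1)^{m-1}+1=1-2m\neq 0$. So Theorem \ref{chacracteristic} gives $a=-1$ there, which explains why that case must be excluded.
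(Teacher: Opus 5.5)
Your argument is correct and matches the paper, which states this corollary without a separate proof as an immediate consequence of Theorem~\ref{Cameron} (for connected $G$, the setting of the classification), the identity $\dim R/I(G)=\alpha(G)$, and the definition $a(R/I(G))=\deg h_{R/I(G)}(t)-\dim R/I(G)$. One caveat on your optional disconnected aside: besides even star triangles you would also have to exclude isolated-vertex components, because $K_1$ satisfies $\nu=\mu=0$ but $I(K_1,-1)=0$; for instance, $P_2\cup K_1$ has $a(R/I(G))=-1$.
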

\begin{Remark}
Corollary~\ref{avariant} recovers \cite[Theorem~1.1]{Hibi3}. 
Our proof via the independence polynomial is simpler than the original proof, which uses the Hilbert series.
\end{Remark}

\subsection{Degree of $h$-polynomials of edge ideals of antiregular graphs}
Antiregular graphs form an extremal class of graphs whose vertex degrees are 
as distinct as possible. Because of their recursive structure, they provide a 
natural family for studying algebraic invariants of edge ideals. 
We begin by recalling the definition of an antiregular graph.
\begin{Definition}
  Let $G=(V,E)$ be a simple graph with $|V| \geq 2$. $G$ is called an \emph{antiregular} if it has at most two vertices of the same degree. 
\end{Definition}
\begin{Theorem}
  For every integer $n \geq 2$, there exists a unique connected antiregular graph of order $n$, denoted by $A_n$, and a unique disconnected antiregular graph of order $n$, denoted by $\overline{A}_n$.
\end{Theorem}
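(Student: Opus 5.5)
The plan is to prove existence and uniqueness together by induction on $n$, using the degree sequence. An antiregular graph on $n$ vertices has degrees in $\{0,1,\ldots,n-1\}$. Since $0$ and $n-1$ cannot both occur (a vertex of degree $n-1$ is adjacent to every vertex), the degrees lie in a set of $n-1$ consecutive values. With $n$ vertices and $n-1$ possible values, pigeonhole forces exactly one repeated value, and every value of the range is attained exactly once except that one. The two cases are:
\begin{itemize}
\item[(a)] degrees in $\{1,\ldots,n-1\}$, so $G$ is connected, because a vertex of degree $n-1$ is adjacent to all others;
\item[(b)] degrees in $\{0,\ldots,n-2\}$, so $G$ has an isolated vertex and is disconnected.
\end{itemize}
The complement sends degree $d$ to $n-1-d$, so it maps antiregular graphs on $n$ vertices to antiregular graphs and swaps cases (a) and (b). It therefore suffices to prove that there is exactly one graph of type (b) up to isomorphism. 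We then set $\overline{A}_n$ to be that graph and $A_n$ its complement.

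\textbf{Uniqueness.} Let $G$ be of type (b), with $n\ge 3$; for $n=2$ the graph is just two isolated vertices or $K_2$. Since the degrees run over $\{0,\ldots,n-2\}$ with one repetition, there is exactly one isolated vertex $z$. The exception is $n=2$, and a short check shows the repetition cannot be at $0$ when $n\ge3$: two isolated vertices would force the maximum degree to be at most $n-3$, but $n-2$ must occur. Remove $z$ to get $G'=G-z$ on $n-1$ vertices. The degrees are unchanged and run over $\{1,\ldots,n-2\}$ with one value repeated, so $G'$ is antiregular of type (a) on $n-1$ vertices. By induction $G'\cong A_{n-1}$ is unique, hence $G\cong A_{n-1}\cup K_1$ is unique. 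Taking complements gives uniqueness in case (a) as well.

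\textbf{Existence.} Define $A_2=K_2$, $\overline{A}_n=A_{n-1}\cup K_1$, and $A_n=\overline{\overline{A}_n}=K_1+\overline{A_{n-1}}$ (Zykov sum). Check inductively that $\overline{A}_n$ is antiregular: adding an isolated vertex to $A_{n-1}$, whose degrees are $\{1,\ldots,n-2\}$ with one repeat, yields degrees $\{0,\ldots,n-2\}$ with one repeat. Its complement is then antiregular and connected.

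The main obstacle is the bookkeeping at the start of the uniqueness step. One must rule out a repeated degree $0$, and for the complement in case (a) a repeated degree $n-1$, so that deleting the isolated (respectively dominating) vertex really leaves an antiregular graph with the shifted degree range. This requires the pigeonhole argument to be applied carefully, including the base cases $n=2,3$.
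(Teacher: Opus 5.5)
The paper gives no proof of this statement: it quotes it as a known result (due to Behzad and Chartrand) and only recalls Merris's recurrences $A_n=K_1+\overline{A}_{n-1}$ and $A_n=K_1+(K_1\cup A_{n-2})$. It later uses $\overline{A}_n=K_1\cup A_{n-1}$ when computing $I(\overline{A}_n,-1)$.

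Your argument is correct and self-contained. It has four ingredients:
\begin{itemize}
\item the dichotomy between the degree windows $\{0,\ldots,n-2\}$ and $\{1,\ldots,n-1\}$;
\item complementation, which swaps the two windows and exchanges connected with disconnected;
\item the exclusion of a repeated degree $0$ for $n\ge 3$;
\item the induction through $G-z$.
\end{itemize}
Together these give both existence and uniqueness. They also derive the recurrence $\overline{A}_n\cong K_1\cup A_{n-1}$, $A_n\cong K_1+\overline{A}_{n-1}$, which the paper takes for granted.

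One point to tighten. Pigeonhole only gives \emph{at least} one repeated degree; ``exactly one'' must come from the definition. The definition must be read as ``only one pair of vertices has equal degree.'' Under the literal per-degree reading of the paper's wording (no degree is shared by more than two vertices), two non-isomorphic connected graphs on four vertices would qualify: $P_4$, with degrees $2,2,1,1$, and the paw drawn as $A_4$, with degrees $3,2,2,1$. Uniqueness would then fail. Your proof implicitly and correctly uses the standard definition. With it, you have $n-1$ distinct degree values inside a window of $n-1$ consecutive integers, so every value in the window is attained, which is exactly what your uniqueness and complement steps need.
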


The antiregular graphs can be defined by the following recurrence relationship \cite{Merris}
$$A_1 = K_1, A_{n} = K_1 + \overline{A}_{n-1}, n \geq 2, \text{ or}$$
$$A_1 = K_1, A_2 = K_2, A_{n} = K_1 + (K_1 \cup A_{n-2}), n \geq 2,$$
where $K_1$ is a graph with an isolated vertex only, $K_2$ is the complete graph on two vertices and $\overline{A}_n$ is the complement of $A_n$.

\begin{center}
  \begin{tikzpicture}[scale=0.9, every node/.style={draw=black, fill=white, circle, inner sep=2pt}]
    \node[draw=none, fill=none] at (-0.7, 3) {$A_1$};
    \node[label=right:$v_1$] (v1) at (0, 3) {};
    \node[draw=none, fill=none] at (1.5, 3) {$A_2$};
    \node[label=north west:$v_1$] (v1) at (2, 4) {};
    \node[label=south west:$v_2$] (v2) at (2, 2) {};
    \draw (v1) -- (v2);
 \node[draw=none, fill=none] at (3.5, 3) {$A_3$};
    \node[label=north west:$v_1$] (v1) at (4, 4) {};
    \node[label=south west:$v_2$] (v2) at (4, 2) {};
    \node[label=south east:$v_3$] (v3) at (6, 2) {};
    \draw (v1) -- (v2) -- (v3);
 \node[draw=none, fill=none] at (9.5, 3) {$A_4$};
    \node (v1) at (10, 2) {};
    \node (v2) at (8, 2) {};
    \node (v3) at (12, 2) {};
    \node (v4) at (12, 4) {};
    \draw (v1) -- (v2);
    \draw (v1) -- (v3);
    \draw (v1) -- (v4);
    \draw (v3) -- (v4);
\end{tikzpicture}\\
\textbf{Figure 3.} The antiregular graphs $A_1, A_2, A_3, A_4$.
\end{center}

In the following lemma, we will get the value of the independence polynomial of $A_n$ and $\overline{A}_n$ at $-1$. 
\begin{Lemma}\label{antiregular}
  Let $A_n$ be the connected antiregular graph  and $\overline{A}_n$ be the disconnected antiregular graph on $n$ vertices $(n \geq 2)$. Then,
$$I(A_n,-1) = -1, \text{ for } n \geq 2;$$
$$I(\overline{A}_n,-1) = 0 \text{ for } n \geq 2;$$
and 
$$I'(\overline{A}_n,-1) \neq 0 \text{ for } n \geq 3.$$
\end{Lemma}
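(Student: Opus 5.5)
The plan is to work entirely from the recursive description of antiregular graphs recalled above, together with Lemma~\ref{Joint}. From $A_{n}=K_1+(K_1\cup A_{n-2})$ and $A_n=K_1+\overline{A}_{n-1}$ we read off $\overline{A}_{n-1}=K_1\cup A_{n-2}$. Equivalently, for every $n\ge 2$ the disconnected antiregular graph is $\overline{A}_n=K_1\cup A_{n-1}$, with the convention $A_1=K_1$, so that $\overline{A}_2$ is two isolated vertices. All three claims will then follow from two polynomial identities.

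First, Lemma~\ref{Joint} (i) gives
\[
I(\overline{A}_n,t)=(1+t)\,I(A_{n-1},t),\qquad n\ge 2 .
\]
Evaluating at $t=-1$ yields $I(\overline{A}_n,-1)=0$ for all $n\ge 2$; this is also immediate from Lemma~\ref{distance3}, since $\overline{A}_n$ has an isolated vertex.

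Second, Lemma~\ref{Joint} (ii) applied to $A_n=K_1+\overline{A}_{n-1}$ gives
\[
I(A_n,t)=(1+t)+I(\overline{A}_{n-1},t)-1=t+I(\overline{A}_{n-1},t).
\]
For $n\ge 3$ we have $n-1\ge 2$, so the previous step gives $I(\overline{A}_{n-1},-1)=0$, and hence $I(A_n,-1)=-1$. The base case $n=2$ is checked directly: $A_2=K_2$ has $I(K_2,t)=1+2t$, so $I(A_2,-1)=-1$. This is also a special case of Lemma~\ref{star}, since $K_2$ is the star $S_1$.

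For the derivative, differentiate the first identity to get
\[
I'(\overline{A}_n,t)=I(A_{n-1},t)+(1+t)\,I'(A_{n-1},t).
\]
At $t=-1$ the second term vanishes, so $I'(\overline{A}_n,-1)=I(A_{n-1},-1)$. For $n\ge 3$ we have $n-1\ge 2$, and the previous step gives $I(A_{n-1},-1)=-1\neq 0$. The bound $n\ge 3$ is exactly what is needed. For $n=2$ the formula would instead give $I(A_1,-1)=I(K_1,-1)=0$, and indeed $I(\overline{A}_2,t)=(1+t)^2$ has vanishing derivative at $-1$.

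The only step needing care is justifying the identity $\overline{A}_n=K_1\cup A_{n-1}$ from the two stated recurrences, including the small cases $n=2,3$. Once that is in place, the argument is a two-line induction-free computation. I expect no real obstacle beyond keeping the index shifts and base cases consistent.
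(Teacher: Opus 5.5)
Your proposal is correct and is essentially the paper's argument: both rest on $\overline{A}_n=K_1\cup A_{n-1}$, Lemma~\ref{Joint}, the base case $I(K_2,t)=1+2t$, and differentiating $I(\overline{A}_n,t)=(1+t)I(A_{n-1},t)$ at $t=-1$. The differences are cosmetic. You obtain $I(A_n,-1)$ from $A_n=K_1+\overline{A}_{n-1}$ instead of from $A_n=K_1+(K_1\cup A_{n-2})$, which is the same identity once $\overline{A}_{n-1}=K_1\cup A_{n-2}$ is substituted. The step you flag as needing care is settled in one line, as in the paper, by complementing $A_n=K_1+\overline{A}_{n-1}$, which gives $\overline{A}_n=K_1\cup A_{n-1}$ directly.
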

\begin{proof}
  For $n = 2$, we have $I(A_2,t) = I(K_2,t) = 1+2t$, as a result $I(A_2,-1) = I(K_2,-1) =  -1$. For $n >2$, we have
$$A_{n} = K_1 + (K_1 \cup A_{n-2}).$$
By Lemma \ref{Joint}, we have
\begin{align*}\label{1}
  I(A_n,t) & = I(K_1,t) + I(K_1,t)I(A_{n-2},t) - 1 \\
   & = I(K_1,t)(1 + I(A_{n-2},t)) - 1\\
   & = (1+t)(1 + I(A_{n-2},t)) - 1.
\end{align*}
From this, it follows that 
$I(A_n, -1) = -1$, for $n> 2$.
It is clear that
$$\overline{A}_n = \overline{K_1 + \overline{A}_{n-1}} = \overline{K_1} \cup \overline{\overline{A}_{n-1}} = K_1 \cup A_{n-1}.  $$
By Lemma \ref{Joint}, we get
$$I(\overline{A}_n,t) = I(K_1,t)I(A_{n-1},t) = (1+t)I(A_{n-1},t),$$
for $n \geq 2$.
Therefore,
\[
I'(\overline{A}_n,t)
= I(A_{n-1},t) + (1+t)I'(A_{n-1},t),
\quad \text{for } n \ge 2.
\]

By substituting $t=-1$, we obtain
\[
I(\overline{A}_n,-1)=0 \quad \text{for } n \ge 2,
\]
and
\[
I'(\overline{A}_n,-1)\neq 0 \quad \text{for } n \ge 3.
\]
\end{proof}

\begin{Theorem}
Let $A_n$ denote the connected antiregular graph on $n$ vertices, and let
$\overline{A}_n$ denote the disconnected antiregular graph on $n$ vertices.
Then,
\[
\deg\ h_{R/I(A_n)}(t)=\alpha(A_n), \quad \text{for } n\ge 2,
\]
and
\[
\deg\ h_{R/I(\overline{A}_n)}(t)=\alpha(\overline{A}_n)-1,
\quad \text{for } n\ge 3.
\]
\end{Theorem}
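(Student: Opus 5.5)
The plan is to derive both statements directly from Theorem \ref{chacracteristic}, with Lemma \ref{antiregular} supplying the values of the independence polynomials and their derivatives at $-1$.

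\emph{Connected case.} For $A_n$ with $n\ge 2$, Lemma \ref{antiregular} gives $I(A_n,-1)=-1\neq 0$. The first part of Theorem \ref{chacracteristic} then yields $\deg\ h_{R/I(A_n)}(t)=\alpha(A_n)$ immediately.

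\emph{Disconnected case.} For $\overline{A}_n$ with $n\ge 3$, Lemma \ref{antiregular} gives $I(\overline{A}_n,-1)=0$, so we are in the second part of Theorem \ref{chacracteristic}. The integer $k=\min\{i \mid I^{(i)}(\overline{A}_n,-1)\neq 0\}$ is at least $1$. Lemma \ref{antiregular} also gives $I'(\overline{A}_n,-1)\neq 0$ for $n\ge 3$, hence $k=1$. Therefore $\deg\ h_{R/I(\overline{A}_n)}(t)=\alpha(\overline{A}_n)-1$.

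\emph{Checking the derivative claim.} The only step with real content is $I'(\overline{A}_n,-1)\neq 0$, which the lemma's proof asserts somewhat quickly, so I would re-verify it. From $I(\overline{A}_n,t)=(1+t)I(A_{n-1},t)$ we get
\[
I'(\overline{A}_n,t)=I(A_{n-1},t)+(1+t)I'(A_{n-1},t).
\]
Evaluating at $t=-1$ gives $I'(\overline{A}_n,-1)=I(A_{n-1},-1)$. For $n\ge 3$ we have $n-1\ge 2$, so this equals $-1$ by the first assertion of Lemma \ref{antiregular}.

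This also explains the restriction $n\ge 3$. When $n=2$, $A_1=K_1$ and $I(A_1,-1)=0$, so $\overline{A}_2$ (two isolated vertices) behaves differently and the case must be excluded.
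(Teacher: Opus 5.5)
Your proposal is correct and is the paper's argument: the paper's proof simply cites Lemma~\ref{antiregular} together with Theorem~\ref{chacracteristic}, exactly as you do. Your explicit check $I'(\overline{A}_n,-1)=I(A_{n-1},-1)=-1$ for $n\ge 3$ supplies the step the lemma's proof leaves implicit, and your remark on why $n=2$ is excluded is accurate.
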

\begin{proof}
  The conclusion follows from Lemma~\ref{antiregular} together with Theorem~\ref{chacracteristic}.
\end{proof}

\section*{Acknowledgements} \sk
The author thanks the referees for their careful reading and helpful comments.

\end{document}